\makeatletter\@addtoreset{equation}{section}\makeatother
\newtheorem{theorem}{Theorem}[section]
\newtheorem{lemma}[theorem]{Lemma}
\newtheorem{proposition}[theorem]{Proposition}
\newtheorem{assumption}[theorem]{Assumption}
\newtheorem{definition}[theorem]{Definition}
\theoremstyle{remark}
\newtheorem{remark}[theorem]{Remark}
\numberwithin{equation}{section}
\title[The stochastic evolution of an infinite population]{The stochastic evolution of an infinite population with logistic-type interaction}
\thanks{The present research was supported by the Deutsche Forschungsgemeinschaft (DFG) through the SFB 1238 "Taming uncertainty and
profiting from randomness and low regularity in analysis, statistics
and their applications"}
\author{ Yuri  Kozitsky}
\address{Instytut Matematyki, Uniwersytet Marii Curie-Sk{\l}odowskiej, 20-031 Lublin, Poland}
\email{jurij.kozicki@mail.umcs.pl}
\author{ Michael R\"ockner}
\address{Fakult\"at f\"ur Mathematik, Universit\"at Bielefeld, 33615
Bielefeld, Germany}
\email{roeckner@math.uni-bielefeld.de}
\keywords{Martingale problem; measure valued Markov process; Fokker-Planck equation; uniqueness; stochastic semigroup}%
\subjclass{60J25; 60J75; 60G55; 35Q84}
\begin{document}

\begin{abstract}
 
An infinite population of point entities dwelling in
the habitat $X=\mathds{R}^d$ is studied. Its members
arrive in and depart from $X$ at random. The departure rate has a term corresponding to a logistic-type
interaction between the entities. Thereby, the corresponding Kolmogorov operator
$L$  has an additive quadratic term, which usually
produces essential difficulties in its study. The population
pure states are locally finite counting measures defined on $X$. The
set of such states $\Gamma$ is equipped with the vague
topology, which allows one to use probability measures defined thereon.  
The population evolution is described at two levels. At the first level, one deals 
with the Fokker-Planck equation for $(L,\mathcal{F},\mu_0)$ where
$\mathcal{F}$ is an appropriate set of bounded 
test functions $F:\Gamma\to \mathds{R}$ (domain of $L$) and $\mu_0$ is an initial state, which is supposed to belong to the set $\mathcal{P}_{\rm exp}$ of
sub-Poissonian probability measures on $\Gamma$. 
We prove that the Fokker-Planck
equation  has a unique solution $t\mapsto\mu_t$, which belongs to
$\mathcal{P}_{\rm exp}$.  Some of the properties of this solution are also described. The second-level description yields a Markov process with cadlag paths such that its one-dimensional marginals coincide with the mentioned states $\mu_t$. The process is obtained as the unique solution of the corresponding martingale problem.  The results obtained are discussed and compared with those known for similar models with logistic-type interactions.
\end{abstract}

\maketitle

\tableofcontents

\section{Introduction}
More than 200 years ago, T. R. Malthus  suggested describing the
evolution of a population by the differential equation
$\dot{N}_t = (\lambda - \mu)N_t$, where $N_t$ is the size of the population
 (number of entities) at time $t$, $\dot{N}_t$
 denotes the time derivative, and $\lambda$ and $\mu$ are
the rates of fertility and mortality, respectively. Later, P. F.
Verhulst modified this equation by making the mortality rate
state-dependent in the form $\mu= \mu_0 +\mu_1 N$ that takes into
account the increase of $\mu$ due to the competition between the
entities for the resources available in the habitat. This
modification leads to the following evolution equation $\dot{N}_t =
(\lambda -\mu_0) N_t - \mu_1 N_t^2$, now known as the logistic
growth equation \cite{Ver}. The appearance of a quadratic term
makes the theory more complex. In
particular, rough methods that do not take into account the sign of
the quadratic term are no longer adequate. At the same time,
this change in the model essentially alters the dependence of
$N_t$ on $t$. In particular, it gets globally bounded in time in 
contrast to the unbounded growth possible in Malthus' theory.

In individual-based (i.e., microscopic) modeling, population members are
assigned traits, typically spatial locations $x\in X$. Then,
the counting measures $\gamma$ defined on the trait space $X$ are
naturally used as pure population states. That is, for a
suitable $\Delta \subset X$, $\gamma(\Delta)$ is the size of the
subpopulation contained in $\Delta$ if the state of the whole
population is $\gamma$. Returning to the aforementioned modeling
amounts to restricting the theory to $N=\gamma(X)$. Here, however,
one obtains the possibility of studying also infinite populations,
which corresponds to admitting that $\gamma(\Delta)$ is finite not for all
subsets, e.g., only for bounded ones if $X=\mathds{R}^d$. The
set of all such states $\Gamma$ is equipped with a suitable topology
and hence with the corresponding Borel $\sigma$-field
$\mathcal{B}(\Gamma)$, which allows one to employ
probability measures on $(\Gamma,\mathcal{B}(\Gamma))$ as population
states. By $\mathcal{P}(\Gamma)$ we shall denote the set of all such
measures. In this setting, pure states $\gamma$ appear as the
corresponding Dirac measures. In the case of random events, the map $\omega \mapsto \gamma$, a \emph{random measure}, cf. \cite{Dawson,Li}, serves as a random 
`variable', whose distribution is given by a certain $\mu \in \mathcal{P}(\Gamma)$.    
Another advantage of the individual-based modeling
is that the evolution equation may now appear in its `dual' form
$\dot{F}_t = L F_t$, called the backward Kolmogorov equation. Here $F:\Gamma \to \mathds{R}$ is a suitable
test function, whereas $L$ is the Kolmogorov operator (generator)
which contains complete information concerning the elementary acts
of population dynamics.

In this paper, we consider the model in which $X=\mathds{R}^d$,
$d\geq 1$, and the Kolmogorov operator reads 
\begin{eqnarray}
  \label{L}
L & = & L^{+} + L^{-}, \\[.2cm] \nonumber
(L^{+}F)(\gamma) & = & \int_X b(x)[ F(\gamma\cup x ) - F(\gamma)] dx, \\[.2cm] \nonumber (L^{-}F)(\gamma) &
= & - \int_X \left(m(x) + \int_X a (x-y) (\gamma\setminus x)(d y)
\right) [  F(\gamma) -  F(\gamma\setminus x )] \gamma ( d x),
\end{eqnarray}
where we use the notations: $\gamma\cup x = \gamma +\delta_x$,
$\gamma\setminus x = \gamma -\delta_x$, $\delta_x$ being Dirac's
measure with atom at $x$. In this model, point entities 
randomly arrive and depart from the habitat $X=\mathds{R}^d$.  The
departure part $L^{-}$ is taken in logistic form: the 
departure rate from $\Delta \subset X$ is
\[
\int_\Delta m(x) \gamma (d x) + \int_{\Delta} \left( \int_X a(x-y)
(\gamma\setminus x) (d y)\right) \gamma(d x),
\]
where the second term corresponds to the departure due to the
influence (competition) of the rest of the population described by
\[
\int_X a(x-y) (\gamma\setminus x) (d y).
\]
The arrival
rate in a given $\Delta$ is $\int_\Delta b(x) dx$. It may be infinite for non-compact $\Delta$  and is
state-independent. In view of this, the model 
defined in \eqref{L} 
only partially corresponds to the aforementioned Velhulst model 
in which the newcomers emerge from the already existing population. The latter aspect of the dynamics 
is taken into account in the Bolker-Pacala (or Bolker-Pacala-Dieckmann-Law) model, introduced in \cite{BP,BP1,Law,MDL} and studied in \cite{Ether,Dima,Four,KK2,KK1,Mail} and in several other publications. The generator corresponding to the Bolker-Pacala model has the same departure part $L^{-}$, 
whereas the arrival part now reads 
\begin{equation}
\label{BPLp}
(L^{+}F)(\gamma)  =  \int_X \left( \int_X a^{+}(x-y) \gamma (d y) \right)[ F(\gamma\cup x ) - F(\gamma)] dx,
\end{equation}
where $a^{+}$ is a dispersal kernel, the presence of which further complicates the theory. In a sense, the current study, combined with the results of \cite{KK1}, lays the groundwork for constructing a similar Markov process for the Bolker-Pacala model.  The importance of such constructions stems from the fact that   
individual-based models of this kind, supported by appropriate simulation methods,
find numerous applications in various fields of knowledge; see, e.g.
\cite{Law,Mi,MW,Omel,Sad,WY} and the literature quoted in these works. In Sect. 3.3 below, we discuss in detail our present approach and results, also in light of their application to the Bolker-Pacala model, 
and compare them with those
obtained for such and similar models.

In relatively simple situations, the stochastic evolution of a given model is described by solving the backward Kolmogorov equation $\dot{F}_t = LF$, performed by constructing a $C_0$-semigroup acting in suitable Banach spaces of test functions, see, e.g., \cite[Chapt. II]{EN}. However, in our case, this direct way is rather impossible in view of the complex nature of $L$ given in \eqref{L}. In particular, for nontrivial test functions, the very point-wise finiteness of $L^{-}F$, and hence of $LF$,  is not obvious in advance. Instead, we obtain the   
 evolution of states as a map $[0,+\infty) \ni t \mapsto \mu_t\in
\mathcal{P}(\Gamma)$, which solves the Fokker-Planck equation
\begin{equation}
  \label{FPE}
\mu_t (F) = \mu_0 (F) +\int_0^t \mu_u(LF) du, \quad \mu|_{t=0} = \mu_0, \quad \mu(F):= \int F
d\mu,
\end{equation}
see \cite{FKP} for the general theory of such and similar equations.
Here $\mu_0$ is an initial state
 and $F$  is supposed to belong to a sufficiently representative class of functions,
$\mathcal{F}$, considered as the domain of $L$.
To stress this, we shall speak of the Fokker-Planck equation for
$(L,\mathcal{F}, \mu_0)$. Note that now $LF$ need not be point-wise finite, which a priori imposes the $\mu_t$-integrability 
condition on the solution of \eqref{FPE}.

Let $Z$ be an integer-valued random variable, and $\varphi_Z (\zeta ) = \mathds{E}\zeta^Z$ be its probability generating function. 
The $n$-th derivative at $\zeta=1$ (if exists) is the corresponding factorial moment of $Z$, i.e., $\phi_n(Z) = \mathds{E}Z (Z-1) \cdots (Z-n+1)$, see, e.g., \cite[Sect. 5.2, page 112]{DVJ}.  If $Z$ is Poissonian with parameter $\lambda$, then 
$\varphi_Z (\zeta ) = e^{\lambda(\zeta-1)}$ and hence $\phi_n(Z) = \lambda^n$. 
For a compact $\Lambda \subset X$, we set
\begin{equation}
 \label{Lambd}
 \Gamma^{(n)}_\Lambda = \{\gamma \in \Gamma: \gamma(\Lambda ) = n\}, \qquad n\in \mathds{N}_0.
\end{equation}
Then each such $\Lambda$ and $\mu\in \mathcal{P}(\Gamma)$ determine the distribution $p_{\Lambda, \mu}(n) = \mu(\Gamma^{(n)}_\Lambda)$ of a random variable, $Z_{\mu,\Lambda}$, which is Poissonian with parameter $\kappa(\Lambda)$ if $\mu=\pi_\kappa$, where the latter is the Poisson measure on $\Gamma$ with intensity measure $\kappa$. 
In dealing with states of infinite `particle' systems, one often
tries to confine consideration to a suitable subset of
$\mathcal{P}(\Gamma)$, which may bring additional
technical possibilities and also shed light on the properties of possible solutions. In particular, the appearance of a `heavy tail' of  $\{p_{\Lambda, \mu}(n)\}$ may indicate the possibility of `clumping' in $\Lambda$ in the state $\mu$. See subsect. 3.3 for more on this issue.   
As in \cite{KK1,KK,KR,KR1}, we shall use here the
set of \emph{sub-Poissonian} measures $\mathcal{P}_{\rm exp}$.
By definition, each such measure $\mu$ has the property 
\begin{equation}
 \label{Lambd1}
 \phi_n (Z_{\mu, \Lambda}) = \int_{\Lambda^n} k_\mu^{(n)} (x_1, \dots , x_n) dx_1 \cdots d x_n, \qquad n\in \mathds{N},
\end{equation}
with $k_\mu^{(n)}$ being positive symmetric elements of $L^\infty(X^n)$ satisfying Ruelle's bound, see Definition \ref{X3df} below. These $k_\mu^{(n)}$ are called \emph{correlation functions}, cf. \cite{KK,Kuna,Lenard,Ruelle,Zessin}, which completely characterize the corresponding state.

Our aim in this work is to construct a unique
Markov process with cadlag paths corresponding to (generated by)
$L$ given in (\ref{L}). Here we are going to follow the scheme
elaborated in our previous (rather lengthy) works \cite{KR,KR1}
based on solving a restricted martingale problem, see \cite[Chapter
5]{Dawson}. Here, however, the generator \eqref{L} is unbounded, which forced us  
to develop essentially new technical tools to implement this scheme, in particular, to include the $\mu_t$ -integrability condition in the very definition of a solution of \eqref{FPE}. 
The key feature of our scheme is that the one-dimensional
marginals of the corresponding path measures, which solve this
problem, belong to $\mathcal{P}_{\rm exp}$ and solve (\ref{FPE}). Along with evident technical benefits, the latter fact indicates that clumping does not appear in the course of the evolution described in this way.  
The uniqueness of path measure solutions means that all finite-dimensional marginals of two such path measures coincide. The latter is obtained by the fact that their
one-dimensional marginals  coincide, which
in turn is obtained by showing that the Fokker-Planck equation for
$(L,\mathcal{F},\mu_0)$ has a unique solution whenever $\mu_0$ is in
$\mathcal{P}_{\rm exp}$.  It should be pointed out here that $L$ as in
(\ref{L}) is a particular case of the generator studied in
\cite{GK}, where the corresponding Markov processes were obtained by
solving a stochastic equation involving $L$. However, uniqueness in \cite{GK} was obtained only for constant departure (mortality) rates, which in our notations corresponds to $m(x) \equiv m>0$ and $a(x) \equiv 0$, and hence excludes the logistic part. 
A similar model containing the logistic part was studied in \cite{Ether}, where uniqueness was proved only for processes with values in the set of finite configurations.  
 A more detailed comparison and related discussions are given in subsect. 3.3 below.

The remainder of this article consists of three parts. In the first part,  Sections 2 and 3, we introduce technicalities (Sect. 2) and formulate the results as Theorems \ref{1tm} and \ref{2tm} (Sect. 3). In the second part, Sections 4 and 5,  we
prove Theorem \ref{1tm},
which states that for each $\mu_0\in \mathcal{P}_{\rm exp}$, the Fokker-Planck equation \eqref{FPE} for $(L, \mathcal{F},\mu_0)$
has  a unique solution  $\mu_t\in \mathcal{P}_{\rm exp}$. Certain properties of this solution are also described. Here, uniqueness is meant in the class of all measures for which the very
solution of this equation can be defined, see Definition \ref{1df}. Among the key ingredients of the proof we mention: (a) the proper choice of the domain $\mathcal{F}$, see \eqref{X18}; (b) the proof that every solution of \eqref{FPE} for $(L, \mathcal{F},\mu_0)$ lies in $\mathcal{P}_{\rm exp}$, see Lemma \ref{01lm}; (c) the result of \cite{KK} where the evolution of states $t\mapsto \mu_t\in
\mathcal{P}_{\rm exp}$ describing the stochastic dynamics governed by
(\ref{L}) was obtained with the help of correlation functions. 
Typically, the domain $\mathcal{F}$ is taken as a subset of the set of all bounded
continuous functions $C_{\rm b}(\Gamma)$, and $L$ is supposed to
have the property $L:\mathcal{F}\to C_{\rm b}(\Gamma)$, or at least
$L:\mathcal{F}\to B_{\rm b}(\Gamma)$, where the latter is the set of
all bounded measurable functions. However, the presence of the
quadratic term in (\ref{L}) makes such a property barely possible
since in proving that $LF$ is bounded, the sign of this quadratic
term cannot be taken into account. In our approach, we take
$\mathcal{F}\subset C_{\rm b}(\Gamma)$ and define solutions of
(\ref{FPE}) as maps $t\mapsto \mu_t$ for which $LF$ is absolutely
$\mu_t$-integrable for (Lebesgue) almost all $t>0$, see Definition
\ref{1df}. The construction of $\mathcal{F}$ is made in such a way
that $\pm L^{\pm} F \geq 0$ for each $F\in \mathcal{F}$, see
 \eqref{0d}. This allows one to keep track of the
sign of the quadratic term in $L^{-}$. 

A significant property of measures $\mu\in \mathcal{P}_{\rm exp}$ is $\mu(\Gamma_*)=1$, where 
$\Gamma_* \subset \Gamma$ consists of those $\gamma$ for which $\psi \gamma$ is a finite measure on $X$, where $\psi(x) = (1+|x|^{d+1})^{-1}$. With the help of this property, $\Gamma_*$ can 
be endowed with the Polish topology induced by the weak topology of the set of all finite measures on $X$. 
Then each $\mu \in \mathcal{P}(\Gamma)$ with the property $\mu(\Gamma_*)=1$ can be redefined as a probability measure on $\Gamma_*$, see Proposition \ref{Gas1pn} and Remark \ref{Gasrk}.  We use this fact in the third part of the article, Sections 6 and 7, where we construct probability measures on the space $\mathfrak{D}_{[0,+\infty)}(\Gamma_*)$ of all cadlag paths with values in $\Gamma_*$. Here we mostly follow the scheme elaborated in our previous works \cite{KR,KR1}. In particular, the measures in question are obtained as unique solutions of the corresponding restricted martingale problems; see Definition \ref{Cad1df} and Theorem \ref{2tm}. In more detail, our approach is presented and commented in subsect. 3.3 below.

\section{Preliminaries}
\label{Psec}

By $\mathds{N}_0= \mathds{N}\cup \{0\}$ we denote the set of all nonnegative integers $0,1,2, \dots$, $\Lambda$ will always denote a compact subset of $X=\mathds{R}^d$. A Polish space, $E$ in general, is a separable space the topology of which is consistent with a complete metric, see, e.g., \cite{Cohn}. By $\mathcal{B}(E)$, $B_{\rm b}(E)$, $C_{\rm b}(E)$, $C_{\rm cs}(E)$ we denote the corresponding Borel $\sigma$-field, the sets of all bounded measurable, bounded  continuous,  and  continuous compactly supported functions, respectively. By $B^{+}_{\rm b}(E)$, $C^{+}_{\rm b}(E)$, $C^{+}_{\rm cs}(E)$ we mean the corresponding cones of positive elements.     
For a suitable set $\Delta$, by $\mathds{1}_\Delta$ we denote the corresponding indicator function.

\subsection{Configuration spaces and measures}

As mentioned above, by $\Gamma$ we denote the standard set of Radon
counting measures on $X=\mathds{R}^d$, which in the sequel are
called \emph{configurations}.  For $x\in X$ and $\gamma\in \Gamma$, we set
$n_\gamma(x) = \gamma(\{x\})$ and $p(\gamma) = \{x\in X:
n_\gamma(x)
>0\}$. The set $p(\gamma)$ is called the ground configurations
for $\gamma$, whereas $\gamma$ itself is the multiset $(p(\gamma), n)$, see, e.g., \cite{Bi}. 
The latter interpretation is consistent with the notations
\begin{equation*}
\int_X g(x) \gamma(d x) =\sum_{x\in \gamma} g(x)  = \sum_{x\in
p(\gamma)} n_\gamma(x) g(x),
\end{equation*}
where $g$ is a suitable numerical function. The weak-hash (vague)
topology of $\Gamma$ is defined as the weakest topology that makes
continuous all the maps
\[
\Gamma \ni \gamma \mapsto \sum_{x\in \gamma} g(x), \qquad g\in
C_{\rm cs}(X).
\]
With this topology $\Gamma$ is a Polish space; see, e.g. \cite{DVJ}.
By $\Gamma_{\rm fin}$ we
denote the subset of $\Gamma$ consisting of all finite
configurations, i.e., those that satisfy $\gamma (X) <\infty$. 
Along with the subspace topology induced on $\Gamma_{\rm fin}$ by the vague topology of
$\Gamma$ we define the following one, see \cite[Sect. 2.1]{Pilorz}. For multisets $\xi=\{x_1, \dots, x_n\}$ and 
$\eta=\{y_1, \dots, y_n\}$, define 
\[
\rho_n(\xi,\eta) = \min_{\sigma \in S_n} \sum_{i=1}^n |x_i - y_{\sigma(i)}|, 
\]
where $S_n$ is the symmetric group. Then set
\[
\rho_{\rm fin} (\xi, \eta) = \frac{\rho_{|\xi|}(\xi, \eta)}{1 + \rho_{|\xi|}(\xi, \eta)}, \qquad {\rm if} \quad |\xi | = |\eta|,
\]
and $\rho_{\rm fin} (\xi, \eta) =1$, otherwise.  One can show that $\rho_{\rm fin}$ is a complete metric, and the corresponding Borel $\sigma$-field $\mathcal{B}(\Gamma_{\rm fin})$ coincides with the 
$\sigma$-field $\{\Delta \in \mathcal{B}(\Gamma): \Delta \subset \Gamma_{\rm fin}\}$. Thus,
each measurable $G:\Gamma_{\rm fin} \to \mathds{R}$ is defined
by a sequence of symmetric Borel functions $\{G^{(n)}\}_{n\in
\mathds{N}_0}$ such that
\begin{equation}
  \label{Xa}
 G(\varnothing)= G^{(0)}, \quad {\rm and} \quad  G(\gamma) = G^{(n)} (x_1, \dots , x_n), \quad {\rm for} \ \
  \gamma= \{x_1, \dots , x_n\}.
\end{equation}
Here \emph{symmetric} means that
\begin{equation}
  \label{Xb}
\forall\sigma\in S_n \qquad G^{(n)} (x_1, \dots , x_n) =  G^{(n)}
(x_{\sigma(1)}, \dots , x_{\sigma(n)}).
\end{equation}
 \begin{definition}
  \label{X1df}
A measurable function $G:\Gamma_{\rm fin} \to \mathds{R}$ is said to
have bounded support if there exist $n\in \mathds{N}$ and a compact
$\Lambda \subset X$ such that the following holds: (a) $G^{(n)} =0$
for  $n>N$; (b) $G(\gamma)=0$ whenever $\gamma(\Lambda) <
\gamma(X)$. The set of all such functions which are bounded is denoted by $B_{\rm
bs}$.
\end{definition}
The Lebesgue-Poisson measure $\lambda$ on $\Gamma_{\rm fin}$ is
defined by the following integrals
\begin{equation}
  \label{Xc}
  \int_{\Gamma_{\rm fin}} G(\gamma) \lambda (d \gamma) = G(\varnothing)
  +\sum_{n=1}^{\infty} \frac{1}{n!}\int_{X^n} G^{(n) }(x_1 , \dots ,
  x_n)  dx_1\cdots d x_n.
\end{equation}
It is clear, that each $G\in B_{\rm bs}$ is absolutely
$\lambda$-integrable. The integral in the left-hand side of
(\ref{Xc}) has the following property, see e.g., \cite[Lemma
A.1]{Kuna},
\begin{equation}
  \label{XL}
\int_{\Gamma_{\rm fin}} G(\eta) \sum_{\xi \subset \eta}
H(\eta\setminus \xi, \xi) \lambda (d \eta) = \int_{\Gamma^2_{\rm
fin}} G(\eta\cup \xi )  H(\eta, \xi) \lambda (d \eta)\lambda (d
\xi),
\end{equation}
where $G$ and $H$ are suitable functions. In view of the multiset terminology adopted here, for $x\in p(\gamma)$, we
write $\gamma\setminus x= \gamma -\delta_x$, i.e., $n_{\gamma\setminus
x}(y) = n_\gamma (y)$ for $y\neq x$, and $n_{\gamma\setminus x}(x) =
n_\gamma (x)-1$. Similarly, $\gamma\cup y$, $y\in X$, stands for
$\gamma +\delta_y$.  We will also use notations
\begin{equation*}
  \sum_{x\in \gamma}\sum_{y\in \gamma\setminus x} g(x,y) =
  \int_{X^2} g(x,y) \gamma (dx) \gamma(dy) - \int_X g(x,x) \gamma(d
  x),
\end{equation*}
and their extensions
\begin{eqnarray}
  \label{X2}
& & \sum_{x_1\in \gamma} \sum_{x_2\in \gamma\setminus x_1} \cdots
\sum_{x_n\in \gamma\setminus \{x_1, \dots , x_{n-1}\}} g(x_1 , \dots
,
x_n) \\[.2cm] \nonumber & & = \sum_{\mathbb{G}\subset \mathbb{K}_n}
( -1)^{l_{\mathbb{G}}} \int_{X^{n_{\mathbb{G}}}} g_{\mathbb{G}}(y_1,
\dots , y_{n_{\mathbb{G}}}) \gamma(dy_1) \cdots
\gamma(dy_{n_{\mathbb{G}}}),
\end{eqnarray}
where each $\mathbb{G}$ is a spanning subgraph of the complete graph
$\mathbb{K}_n$ on $\{1, \dots , n\}$, $l_{\mathbb{G}}$ and
$n_{\mathbb{G}}$ are the number of edges and the connected
components of $\mathbb{G}$, respectively; $g_{\mathbb{G}}(y_1, \dots
, y_{\mathbb{G}})$ is obtained from $g(x_1 , \dots , x_n) $ be
setting $x_i = y_j$ for all $i$ belonging to $j$-th connected
component of $\mathbb{G}$. For $\gamma\in \Gamma$, by writing
$\gamma' \subset \gamma$ we mean a configuration such that
$p(\gamma')\subset p(\gamma)$ and $n_{\gamma'}(x) \leq n_\gamma(x)$,
$x\in p(\gamma')$. This means that $\gamma'$ is a multisubset of $\gamma$. In this case, we say that $\gamma'$ is a
\emph{sub-configuration} of $\gamma$.

Following \cite{Kuna,Lenard}, we now introduce \emph{correlation
measures}. For a given $n\in \mathds{N}$, a compact $\Delta \subset
X^n$ and $\gamma\in \Gamma$, let us consider
\begin{equation}
\label{X2a}
  {Q}^{(n)}_\gamma (\Delta) = \sum_{x_1\in \gamma}\sum_{x_2\in
  \gamma\setminus x_1}\cdots \sum_{x_n\in \gamma\setminus \{x_1, \dots ,
  x_{n-1}\}} \mathds{1}_\Delta (x_1 , \dots x_n).
\end{equation}
Clearly, ${Q}^{(n)}_\gamma$ is a counting measure: $
{Q}^{(n)}_\gamma(\Delta)$ is the number of tuples $(x_1, \dots ,
x_n)\in \Delta$ in state $\gamma$. In particular, ${Q}^{(1)}_\gamma = \gamma$.
It is known, \cite[Theorem
1]{Lenard}, that the map $\gamma \mapsto {Q}^{(n)}_\gamma (\Delta)$
is measurable for each compact $\Delta$. However, it may be
unbounded.
\begin{definition}
  \label{X2df}
A given $\mu \in \mathcal{P}(\Gamma)$ is said to have all
correlations if all $\gamma\mapsto {Q}^{(n)}_\gamma (\Delta)$, $n\in \mathds{N}$ are
$\mu$-integrable for all compact $\Delta \subset X^n$. By
$\mathcal{P}_{\rm cor} (\Gamma)$ we denote the set of all $\mu\in
\mathcal{P}(\Gamma)$ that have all correlations.
\end{definition}
For $\mu\in \mathcal{P}_{\rm cor} (\Gamma)$, one can define
\begin{equation}
  \label{X3}
\chi^{(n)}_\mu (\cdot) = \int_{\Gamma} Q^{(n)}_\gamma (\cdot) \mu
(d\gamma),
\end{equation}
which is called the \emph{correlation measure} of $n$-th order for
$\mu$, cf. \cite{Kuna,Lenard,Zessin}. Note that the factorial moment mentioned in \eqref{Lambd1} and the correlation measure are related to each other as follows
\begin{equation*}
 \phi_n(Z_{\mu,\Lambda}) = \chi^{(n)}_\mu (\Lambda^n).
\end{equation*}
In view of this, correlation measures are also called \emph{factorial moment measures}, cf. \cite[Chapt. 7]{DVJ}. For $G\in B_{\rm bs}$, we write, cf. \cite{Kuna},
\begin{equation}
  \label{X4}
  (K G)(\gamma) =\sum_{\eta\Subset \gamma} G(\eta),
\end{equation}
where $\eta\Subset \gamma$ means that the sum is taken over finite
sub-configurations of $\gamma$, including $\eta = \varnothing$. The advantage of using this $K$-map
can be seen from the following relation
\begin{equation}
  \label{X5}
 \mu (KG) = G(\varnothing) + \sum_{n=1}^\infty \chi_\mu^{(n)}
 (G^{(n)}), \qquad G\in B_{\rm bs},
\end{equation}
see \cite[Corollary 4.1]{Kuna}. We use this fact to introduce the
set of sub-Poissonian measures, which plays the key role in our
constructions. Let $\vartheta$ be a finite (nonempty) collection of
$\theta \in C^{+}_{\rm cs}(X)$, and $|\vartheta|$ stand for its
cardinality. We do not require that the members of $\vartheta$ are
distinct, i.e., $\vartheta$ is a multiset as well. For $n=|\vartheta|$, we set
\begin{equation}
  \label{X7}
  G^\vartheta(\gamma) = \left\{\begin{array}{ll} \frac{1}{n!}\sum_{\sigma\in
  S_n}   \prod_{i=1}^n \theta_i(x_{\sigma(i)}), \quad &{\rm for} \
  \gamma = \{x_1 , \dots , x_n\}, \\[.3cm] 0, \quad &{\rm
  otherwise}.
  \end{array} \right.
\end{equation}
That is, cf.\eqref{Xa},
\begin{gather}
\label{OCT}
(G^{\vartheta})^{(n)} =  g_\vartheta, \qquad  g_\vartheta (x_1 , \dots , x_n) := \frac{1}{n!} \sum_{\sigma \in S_n} \theta_1 (x_{\sigma(1)}) \cdots \theta_n (x_{\sigma(n)}), \\[.2cm] \nonumber  
(G^{\vartheta})^{(m)} = 0, \qquad m\neq n.
\end{gather}
For this function, we have
\begin{equation}
  \label{X6}
  \left( KG^\vartheta\right) (\gamma) = \frac{1}{n!}\sum_{x_1\in \gamma} \sum_{x_2\in
  \gamma\setminus x_1} \cdots \sum_{x_n \in \gamma\setminus \{x_1 , \dots ,
  x_{n-1}\}} \theta_1(x_1) \cdots \theta_n(x_n).
\end{equation}
Let $G_n^\theta$ be as in (\ref{X7}) with $|\vartheta|=n$ and all the
members of $\vartheta$
equal to a given $\theta\in C^{+}_{\rm cs}(X)$. Then the function
\begin{equation}
  \label{X6a}
   F^\theta(\gamma) = \prod_{x\in \gamma} (1+\theta (x)) = \exp\left(
 \sum_{x\in \gamma} \log (1+\theta (x) )\right),
\end{equation}
that possibly takes value $+\infty$, can be written in the form
\begin{equation}
  \label{X6b}
 F^\theta(\gamma) = 1+ \sum_{n=1}^\infty (K G_n^\theta)(\gamma).
\end{equation}
Among all $\mu\in \mathcal{P}(\Gamma)$ we distinguish Poisson
measures. Let $\kappa$ be a positive Radon measure on $X$.
Then the Poisson measure $\pi_\kappa$, for which $\kappa$ is the
\emph{intensity measure}, is defined as such that its correlation
measures satisfy $\chi^{(n)}_{\pi_\kappa} = \kappa^{\otimes n}$.
Then by (\ref{X5}), (\ref{X7}) and (\ref{X6b}) one gets
\begin{gather}
  \label{X6c}
  \pi_\kappa (F^\theta) = 1 + \sum_{n=1}^\infty \frac{1}{n!} \kappa^{\otimes n}
  (G^\theta_n) = \sum_{n=0}^\infty \frac{1}{n!} \left[\kappa(\theta)
  \right]^n = e^{\kappa(\theta)}.
\end{gather}
If $\kappa$ is absolutely continuous with respect to Lebesgue's
measure on $X=\mathds{R}^d$, then the Radon-Nikodym derivative $\rho
= d \kappa/dx$ may be an element of $L^\infty(X)$. A particular case
is a constant $\rho$, i.e., $\rho(x) \equiv \varkappa$ for some
$\varkappa>0$. The corresponding Poisson measure is called
\emph{homogeneous}. For simplicity, we denote it by $\pi_\varkappa$
and call $\varkappa$ the \emph{intensity} of $\pi_\varkappa$. In this
case,
\begin{equation*}
  \pi_\varkappa (F^\theta) = \exp\left( \varkappa \int_X \theta (x) d x \right).
\end{equation*}

\subsection{Sub-Poissonian measures}
In this article, the following class of measures on $\Gamma$ will be
crucially used.
\begin{definition}
  \label{X3df}
$\mu \in \mathcal{P}(\Gamma)$ is said to be sub-Poissonian if it has
all correlations, see Definition \ref{X2df}, and for each $n\in \mathds{N}$ and $\vartheta=\{\theta_1, \dots , \theta_n\}$, $\theta_i\in C^{+}_{\rm
cs}(X)$, and thus for $G^\vartheta$ as in (\ref{X7}) -- (\ref{X6}),
the following holds
\begin{equation}
  \label{X8}
n!  \mu (KG^\vartheta)= \chi^{(n)}_\mu (g_\vartheta) \leq
\varkappa^n \langle \theta_1
  \rangle \cdots \langle \theta_n \rangle, \qquad   \langle \theta_i
  \rangle := \int_X \theta_i (x) d x,
\end{equation}
for one and the same $\varkappa>0$. The least $\varkappa$ satisfying \eqref{X8} will
be called the type of $\mu$; $\mathcal{P}_{\rm exp}$ will denote the
set of all sub-Poissonian measures, whereas $\mathcal{P}^\alpha_{\rm exp}$, $\alpha \in \mathds{R}$, 
will stand for the set of all those $\mu \in \mathcal{P}_{\rm exp}$ the type of which does not exceed $e^\alpha$.
\end{definition}
\begin{remark}
  \label{X1rk}
By Definition \ref{X3df} and (\ref{X6}), each $\mu\in \mathcal{P}_{\rm
exp}$ has the following properties:
\begin{itemize}
  \item[(a)] For every $n\in \mathds{N}$, the map $(\theta_1, \dots , \theta_n)  \mapsto
  \mu(KG^\vartheta)$ can be continued to a continuous symmetric $n$-linear functional on
  the real Banach space $L^1(X)$.
  \item[(b)] For $\theta\in C^{+}_{\rm cs}(X)$,
 the map $\theta \mapsto
  \mu(F^\theta)$, see (\ref{X6a}), can be continued to a  real exponential entire
  function of normal type defined on $L^1(X)$.
  \item[(c)] For each $H\in B^{+}_{\rm b}(\Gamma)$ such that $\mu(H)=: C_H>0$, the measure $\mu_H := C^{-1}_H H \mu$,  lies in $\mathcal{P}_{\rm exp}$ and the types of $\mu$ and $\mu_H$ satisfy \[\varkappa_{\mu_H} \leq \varkappa_{\mu} \max\{1; C^{-1}_H \sup H \}.\]    
\end{itemize}
\end{remark}
\begin{proof}
We begin the proof of (a) by recalling that $X^n$ is locally compact. Let $C_0(X^n)$ denote the real Banach algebra of all continuous functions on $X^n$ that vanish at infinity, equipped with the supremum norm. Then the linear span of the family of functions
\[
(\theta_1 \otimes \cdots \otimes \theta_n)(x_1, \dots , x_n) = \theta_1 (x_1) \cdots \theta_n(x_n), \quad \theta_i \in C^{+}_{\rm cs}(X),  
\]
is a sub-algebra of $C_0(X^n)$, which readily has the following properties: (i) it separates points of $X^n$; (ii) for  each $(x_1, \dots , x_n)$, one finds $(\theta_1, \dots , \theta_n)$ such that $(\theta_1 \otimes \cdots \otimes \theta_n)(x_1, \dots , x_n)\neq 0$. By the Stone-Weierstrass theorem for locally compact spaces, see  \cite{LdB}, it  follows that the mentioned family is dense in the Banach algebra $C_0(X^n)$. Then by 
\cite[Theorem 4.3, page 90]{Brezis}, it is also dense in $L^1 (X^n)$, which by the
estimate in  (\ref{X8}) yields the continuity in question. The stated linearity and symmetricity are immediate.

To prove claim (b), we first note that the
function
\begin{equation*}
  F^\theta_N (\gamma) := 1+ \sum_{n=1}^N (K G_n^\theta)(\gamma)
\end{equation*}
is certainly $\mu$-integrable, and then by (\ref{X6b}) and
(\ref{X8}), we get
\begin{equation*}
 \mu(F_N^\theta) \leq \exp\left(\varkappa_\mu \langle \theta \rangle
 \right),
\end{equation*}
where $\varkappa_\mu$ is the type of $\mu$. By the Beppo Levi
(monotone convergence) theorem, this yields the proof of claim (b). The proof of (c) is evident. 
\end{proof}
Let $\mu$ have all correlation measures $\chi^{(n)}_\mu$ defined in (\ref{X3}). Its correlation measure 
$\chi_\mu$ is defined as a locally finite positive measure on $\Gamma_{\rm fin}$ by the following formula
\begin{equation}
  \label{X10}
\int_{\Gamma_{\rm fin}} G(\eta) \chi_\mu(d \eta) = G(\varnothing) +
\sum_{n=1}^\infty \frac{1}{n!} \int_{X_n} G^{(n)} (x_1 , \dots ,
x_n) \chi_\mu^{(n)} (d x_1 , \dots , dx_n) ,
\end{equation}
holding for all $G\in B_{\rm bs}$. 
Then by (\ref{Xc}) it follows that the Lebesgue-Poisson measure $\lambda$ is the correlation measure for $\pi_\varkappa$, $\varkappa=1$.
\begin{definition}
  \label{Thdf}
Let $q: X\to [0,1]$ be measurable. For a given $\mu\in
\mathcal{P}_{\rm cor}(\Gamma)$,  its $q$-thinning is the measure
$\mu^q\in \mathcal{P}_{\rm cor}(\Gamma)$ defined by the correlation
measures $  \chi_{\mu^q}$ that have the following form
\begin{equation}
  \label{Y0a}
  \chi_{\mu^q} (d \eta) = e(\eta; q) \chi_\mu (d \eta), \qquad
  e(\eta; q) := \prod_{x\in \eta} q(x).
\end{equation}
\end{definition}
One could interpret $\mu^q$ as the law of configurations obtained from $\mu$-distributed ones by independent deleting their points with probability $1-q(x)$, where $x$ is the location of the point being considered.

By standard arguments, see e.g. \cite[Theorem 4.14, page
99]{Brezis}, it follows that for each $\mu\in \mathcal{P}_{\rm
exp}$, its correlation measure $\chi_\mu$ is absolutely continuous
with respect to the Lebesgue-Poisson measure $\lambda$. Its
Radon-Nikodym derivative
\begin{equation}
  \label{X11}
  k_\mu  := \frac{d \chi_\mu}{d \lambda}
\end{equation}
is such that $k_\mu(\varnothing)=1$ and, for $\gamma = \{x_1, \dots
, x_n\}$, $n\in \mathds{N}$, the following holds, cf. \eqref{Lambd1},
\begin{equation}
  \label{X11a}
k_\mu(\gamma) = k_\mu^{(n)} (x_1 , \dots , x_n) := \frac{d
\chi^{(n)}_\mu}{dx_1 \cdots d x_n}(x_1 , \dots , x_n),
\end{equation}
where $k_\mu^{(n)}$ is a symmetric element of the corresponding
$L^\infty (X^n)$, see (\ref{Xb}). Then for $\mu\in \mathcal{P}_{\rm
exp}$ and $G\in B_{\rm bs}$, we get, cf. \eqref{Xc},
\begin{equation}
  \label{X12} \mu(KG) = \int_{\Gamma_{\rm fin}} G (\eta) k_\mu(\eta) \lambda
  ( d \eta) =: \langle\!\langle k_\mu , G \rangle\!\rangle.
\end{equation}
Since $\chi_\mu^{(n)}$ is positive, see (\ref{X2a}), (\ref{X3}), and
in view of (\ref{X8}), we have that
\begin{equation}
  \label{X13}
  0\leq k^{(n)}_\mu (x_1 , \dots , x_n) \leq \varkappa_\mu^n,
\end{equation}
holding for all $n$ and almost all $(x_1, \dots , x_n)$. 
The
upper estimate in (\ref{X13}) is known as Ruelle's bound.
For $G\geq
0$, by (\ref{X12}) and (\ref{X13}) one readily gets 
\begin{equation}
  \label{X14}
\mu (KG) \leq \langle \! \langle k_{\pi_{\varkappa\mu}}, G \rangle
\! \rangle = \pi_{\varkappa_\mu} (KG),
\end{equation}
which may be interpreted as the `sub-Poissonicity' of $\mu$.
The estimate in \eqref{X14} points to the stochastic domination of states $\mu\in\mathcal{P}_{\rm exp}$ by Poisson states, which could be interpreted as the lack of strong correlations in such states. In fact, it is not the case since the states of thermal equilibrium of
interacting physical particles satisfying Ruelle's bound admit phase transitions induced by strong correlations, which are absent in Poisson states; see
\cite{Ruelle}. 

Sub-Poissonian probability distributions are known in various applications, ranging from genetics \cite{Weid} to quantum optics \cite{David}. In these applications, a random variable $Z$ is sub-Poissonian if its  Fano factor $F= {\rm Var}Z/\mathds{E}Z$ satisfies $F\leq 1$. It is a different property from that corresponding to \eqref{X8}. To see this, let us consider $Z$ for which $P(Z=k) = (1-\theta) \theta^{k-1}$, $k\geq 1$, $\theta \in (0,1)$. Its factorial moments $\phi_n(Z) = n! \theta^{n-1}(1-\theta)^{-n}$ fail to satisfy Ruelle's bound $\phi_n(Z)\leq \varkappa^n$. At the same time, this $Z$ is sub-Poissonian in the sense that Fano's factor $F = \theta/(1-\theta)$ is smaller than $1$ for $\theta < 1/2$.
Thus, a more appropriate name for $\mu \in \mathcal{P}_{\rm exp}$ would be \emph{Ruelle's} measures. Nevertheless, we will stick to the version introduced above. More information on sub-Poissonian measures in the sense of Definition \ref{X3df} can be found in \cite[sect. 2.2]{KR}.

\subsection{Tempered configurations and cadlag paths}
States from $\mathcal{P}_{\rm exp}$  have one more significant property which allows one to confine the theory to 
so-called \emph{tempered} configurations,
the set of which $\Gamma_*$ is defined by the condition $\mu(\Gamma_*) =1$, that has to hold for all $\mu \in \mathcal{P}_{\rm exp}$. 
In the present work, this set is defined by means of the function
\begin{equation}
 \label{psi}
 \psi(x) = \frac{1}{1+ |x|^{d+1}}, \qquad x\in X.
\end{equation}
In a similar way, tempered configurations were introduced in \cite[page 41]{Dawson}, see also \cite[page 195]{Ether}.
Obviously,
\begin{equation}
 \label{psi1}
 0<\psi(x) \leq 1, \qquad {\rm and} 
 \qquad \int_X \psi(x) d x =: \langle \psi \rangle < \infty.
\end{equation}
Define
\begin{equation}
\label{Gas} 
 \Gamma_* = \{\gamma \in \Gamma : \varPhi(\gamma):=\gamma (\psi) = \sum_{x\in \gamma}  \psi(x) < \infty   \}.
\end{equation}
By \eqref{X14} we then have 
\begin{equation}
 \label{Gas1}
 \mu(\varPhi) = \int_X k^{(1)}_\mu (x) \psi(x)  d x \leq \varkappa_\mu \langle \psi \rangle < \infty,
\end{equation}
which yields $\mu(\Gamma_*) =1$. Next, we define
\begin{equation}
 \label{Gas2}
 \mathcal{A}_* = \{\mathbb{A}\in \mathcal{B}(\Gamma):\mathbb{A} \subset \Gamma_*\}.
\end{equation}
By \eqref{Gas} it follows that $\Gamma_*$ is the set of all those configurations $\gamma$ for which $\psi\gamma$ is a finite Borel measure on $X$. This fact allows one to define a kind of weak topology on $\Gamma_*$, which we do as follows. Set 
\begin{equation}
 \label{rho}
\rho (\gamma, \gamma') = \min\left\{1; \sup_{g\in C^1_{\rm L} }|\gamma (\psi g) - \gamma' (\psi g)|\right\},  \qquad \gamma, \gamma' \in \Gamma_*,
\end{equation}
where 
\begin{equation*}
C^1_{\rm L} := \left\{ g \in C_{\rm b} (X) : \sup_{x\in X} |g(x) | + \sup_{x, y\in X, \ x\neq y} \frac{|g(x) - g(y)|}{|x-y|} \leq 1\right\} .
\end{equation*}
It is clear that $\rho$ defined in \eqref{rho} is a metric on $\Gamma_*$. 
\begin{proposition}\cite[Lemma 2.7 and Corollary 2.8]{KR}
 \label{Gas1pn}
 The metric space $(\Gamma_*, \rho)$ is complete and separable. Its Borel $\sigma$-field and the collection of sets defined in \eqref{Gas2} satisfy $\mathcal{B}(\Gamma_*)= \mathcal{A}_*$. 
\end{proposition}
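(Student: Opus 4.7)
The plan is to identify $(\Gamma_*,\rho)$ with a subset of the Polish space of finite positive Borel measures on $X$ equipped with the Fortet--Mourier (bounded Lipschitz) metric, via the map $\gamma\mapsto\mu_\gamma:=\psi\gamma$. By \eqref{psi1} and \eqref{Gas}, each $\mu_\gamma$ is a finite positive Borel measure on $X$, and the distance $\rho$ is (up to truncation at $1$) the bounded Lipschitz distance $d_{\rm BL}(\mu_\gamma,\mu_{\gamma'})=\sup_{g\in C^1_{\rm L}}|\mu_\gamma(g)-\mu_{\gamma'}(g)|$. I will use as a black box the classical fact that $d_{\rm BL}$ is a complete separable metric on the space of finite positive Borel measures on the Polish space $X$, inducing the topology of weak convergence.

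Separability of $(\Gamma_*,\rho)$ is then almost immediate. For $\gamma\in\Gamma_*$, the truncation $\gamma_R:=\gamma|_{\{|x|\leq R\}}$ is a finite configuration and satisfies $\rho(\gamma,\gamma_R)\leq\gamma(\psi\mathds{1}_{\{|x|>R\}})\to 0$ as $R\to\infty$ by $\gamma(\psi)<\infty$; each finite configuration is in turn approximated in $\rho$ by configurations supported on $\mathds{Q}^d$, using the continuity and boundedness of $\psi g$ for $g\in C^1_{\rm L}$. A countable dense set is thus given by the finite configurations with rational atoms.

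Completeness is the substantive step. Given a $\rho$-Cauchy sequence $(\gamma_n)\subset\Gamma_*$, the images $\mu_n=\psi\gamma_n$ form a $d_{\rm BL}$-Cauchy sequence and hence converge weakly to some finite positive Borel measure $\mu$ on $X$; in particular $\sup_n\gamma_n(\psi)=\sup_n\mu_n(X)<\infty$. For any $f\in C_{\rm cs}(X)$ the function $f/\psi$ again lies in $C_{\rm cs}(X)$ since $\psi>0$, and by a uniform approximation of $f/\psi$ by compactly supported Lipschitz functions together with the uniform bound on $\mu_n(X)$, one gets $\gamma_n(f)=\mu_n(f/\psi)\to\mu(f/\psi)$. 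Thus $\gamma_n$ converges vaguely in $\Gamma$ to some positive Radon measure $\gamma$; the vague closedness of the set of integer-valued Radon measures inside the positive Radon measures forces $\gamma\in\Gamma$, and the relation $\gamma(f)=\mu(f/\psi)$ identifies $\mu=\psi\gamma$. Then $\gamma(\psi)=\mu(X)<\infty$ gives $\gamma\in\Gamma_*$, and $\rho(\gamma_n,\gamma)\to 0$ by the weak convergence of $\mu_n$.

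For the $\sigma$-field identity, first observe that $\Gamma_*=\{\gamma:\gamma(\psi)<\infty\}$ is vaguely Borel in $\Gamma$, since $\gamma(\psi)=\sup_R\gamma(\psi\chi_R)$ for a smooth exhaustion $\chi_R\in C^{+}_{\rm cs}(X)$ with $\chi_R\uparrow 1$, and each $\gamma\mapsto\gamma(\psi\chi_R)$ is vaguely continuous. Hence $\mathcal{A}_*$ coincides with the trace of $\mathcal{B}(\Gamma)$ on $\Gamma_*$. The inclusion $\mathcal{A}_*\subset\mathcal{B}(\Gamma_*)$ follows from the continuity of the identity $(\Gamma_*,\rho)\to(\Gamma,\text{vague})$ established in the completeness argument. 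For the reverse inclusion it suffices to check that each generator $\gamma\mapsto\gamma(\psi g)$, $g\in C^1_{\rm L}$, is vaguely Borel measurable on $\Gamma_*$; this follows by dominated convergence from $\gamma(\psi g)=\lim_R\gamma(\psi g\chi_R)$, with $|\psi g\chi_R|\leq\psi$ and $\gamma(\psi)<\infty$. The main obstacle is the identification of the weak limit $\mu$ in the completeness step as $\psi\gamma$ for a genuine counting measure $\gamma\in\Gamma_*$; it is here that the strict positivity and $L^1$-integrability of $\psi$, combined with the vague closedness of $\Gamma$, do the real work.
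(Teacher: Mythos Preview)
The paper does not prove this proposition; it is quoted from \cite[Lemma 2.7 and Corollary 2.8]{KR} without argument. So there is no in-paper proof to compare against. Your approach---embedding $\Gamma_*$ into the space of finite positive Borel measures on $X$ via $\gamma\mapsto\psi\gamma$ and pulling back the Fortet--Mourier metric---is exactly the natural one, and is in fact the construction implicit in the definition \eqref{rho}. (Note incidentally that the $\max$ in \eqref{rho} is a typo for $\min$, as otherwise $\rho(\gamma,\gamma)=1$; you correctly read it as a truncation.)

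Your argument is sound. A couple of points that deserve one more sentence each if you write this up formally: (i) the claim that $\mathcal{B}(\Gamma_*)$ is generated by the maps $\gamma\mapsto\gamma(\psi g)$, $g\in C^1_{\rm L}$, follows because your embedding is a homeomorphism onto its image and the Borel $\sigma$-field of $\mathcal{M}^{+}_f(X)$ in the weak topology is generated by evaluations against bounded continuous test functions; (ii) for the continuity of the identity $(\Gamma_*,\rho)\to(\Gamma,\text{vague})$ you can argue directly rather than via the completeness step: if $\rho(\gamma_n,\gamma)\to0$ then $\psi\gamma_n\Rightarrow\psi\gamma$, and for $f\in C_{\rm cs}(X)$ one has $\gamma_n(f)=(\psi\gamma_n)(f/\psi)\to(\psi\gamma)(f/\psi)=\gamma(f)$ since $f/\psi\in C_{\rm cs}(X)\subset C_{\rm b}(X)$. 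With these clarifications your proof is complete and self-contained, relying only on the standard fact that the bounded Lipschitz metric on finite positive measures over a Polish space is complete and metrizes weak convergence.
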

\begin{remark}
 \label{Gasrk}
 The space of tempered configurations defined in \eqref{psi} and \eqref{Gas} is exactly the same as that in \cite{KR}, where one can find more information on the properties of this space. Here we only mention that, in view of the equality $\mathcal{B}(\Gamma_*)= \mathcal{A}_*$, each $\mu\in \mathcal{P}(\Gamma)$ with the property $\mu(\Gamma_*)=1$ can be redefined as an element of $\mathcal{P}(\Gamma_*)$. 
\end{remark}
Similarly as in \cite{KR}, see also \cite[Sect. V]{Dawson} and \cite[Chapter 4]{EK}, we introduce the spaces of cadlag paths with values in $\Gamma_*$. For $s\geq 0$, we let $\mathfrak{D}_{[s,+\infty)} (\Gamma_*)$ define the set of all cadlag maps 
$[s,+\infty) \ni t \mapsto \gamma_t\in \Gamma_*$, where we mean the metric topology of $\Gamma_*$, see Proposition  \ref{Gas1pn}.
For $s=0$, we write $\mathfrak{D}_{\mathds{R}_{+}} (\Gamma_*)$. The elements of $\mathfrak{D}_{[s,+\infty)} (\Gamma_*)$ will be denoted $\bar{\gamma}$. The restriction of a given $\bar{\gamma}\in \mathfrak{D}_{[s,+\infty)} (\Gamma_*)$ is usually considered as an element of $\mathfrak{D}_{[s',+\infty)} (\Gamma_*)$ for every $s' > s$. For $t\geq 0$, by $\varpi_t$ we denote the evaluation map, that is, $\varpi_t(\bar{\gamma})$ is the corresponding value $\gamma_t$ of $\bar{\gamma}$. For $t, t'\geq 0$, $t' > t$, by $\mathfrak{F}^0_{t,t'}$ we mean the $\sigma$-field of subsets of $\mathfrak{D}_{\mathds{R}_{+}} (\Gamma_*)$ generated by the collection of maps $\{\varpi_u: u\in [t,t']\}$. Next, set 
\begin{equation}
 \label{Cad}
\mathfrak{F}_{t,t'} = \bigcap_{\epsilon > 0}\mathfrak{F}^0_{t,t'+\epsilon}, \qquad \mathfrak{F}_{t,+\infty} = \bigcup_{n\in \mathds{N}} \mathfrak{F}_{t,t+n}.
\end{equation}
By \cite[Theorem 5.6, page 121]{EK} the Skorohod topology turns each $\mathfrak{D}_{[s,+\infty)} (\Gamma_*)$ into a Polish space, measurably isomorphic to the measurable space $(\mathfrak{D}_{[s,+\infty)} (\Gamma_*), \sigma(\mathfrak{F}_{s,+\infty}))$.

\subsection{Banach spaces of functions}
For a given $\mu\in \mathcal{P}_{\rm exp}$, its correlation
functions $k_\mu$, $k_\mu^{(n)}$, $n\in \mathds{N}_0$, are defined
in (\ref{X11}), (\ref{X11a}). Recall that the latter is a symmetric
element of $L^\infty(X^n)$ that satisfies the Ruelle estimate
(\ref{X13}). Having this in mind, we introduce Banach spaces
that contain $k_\mu$. As each $k: \Gamma_{\rm
fin} \to \mathds{R}$ is defined by its restrictions $k^{(n)}$ to
$\gamma =\{x_1 , \dots , x_n\}$, $n\in \mathds{N}_0$, cf.
(\ref{X11a}), we set
\begin{equation}
  \label{Y1}
\|k\|_\alpha = \sup_{n\in \mathds{N}_0} \|k^{(n)}\|_{L^\infty}
e^{-\alpha n}, \qquad \alpha \in \mathds{R},
\end{equation}
where
\[
\|k^{(n)}\|_{L^\infty} = {{\rm esssup}}_{(x_1, \dots , x_n)\in X^n}
|k^{(n)} x_1, \dots , x_n)|.
\]
Let $\mathcal{K}_\alpha$ be the real Banach space of $k:\Gamma_{\rm
fin}\to \mathds{R}$ for which $\|k\|_\alpha < \infty$. It is obvious
that
\begin{equation}
  \label{Y2}
\mathcal{K}_{\alpha'} \hookrightarrow \mathcal{K}_{\alpha} , \qquad
{\rm for} \ \ \alpha'< \alpha,
\end{equation}
where we mean continuous embedding. Let $k$ be in
$\mathcal{K}_\alpha$, $\alpha\in \mathds{R}$, and $G$ be in $B_{\rm
bs}$, see Definition \ref{X1df}. By (\ref{X10}) and (\ref{X2}) one
readily gets that
\begin{equation*}
\langle\!\langle |k|,  |G| \rangle\!\rangle = \int_{\Gamma_{\rm
fin}} |k(\eta)| |G(\eta)| \lambda (d \eta) <
  \infty.
\end{equation*}
Recall that $KG$ is defined for all $G\in B_{\rm bs}$, see
(\ref{X4}). Keeping this in mind, we set
\begin{equation*}
  B^\star_{\rm bs} = \{G\in B_{\rm bs}: (KG)(\gamma) \geq 0 \quad {\rm for} \ {\rm all} \ \gamma\in
  \Gamma\}.
\end{equation*}
Note that the cone of point-wise positive $G\in B_{\rm bs}$ is a
proper subset of $B^\star_{\rm bs}$. By \cite[Theorems 6.1 and 6.2
and Remark 6.3]{Kuna} one has the following fact.
\begin{proposition}
  \label{Y1pn}
For each  $\alpha\in \mathds{R}$, the following is true. If $k\in
\mathcal{K}_\alpha$ is such that: (i) $k(\varnothing) = 1$; (ii)
$\langle\!\langle k,  G \rangle\!\rangle \geq 0$ for all $G\in
B^\star_{\rm bs}$, then $k$ is the correlation function for a unique
$\mu \in \mathcal{P}_{\rm exp}$ the type of which does not exceed
$e^\alpha$.
\end{proposition}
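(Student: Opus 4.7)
The plan is to realise $k$ as the correlation function of a probability measure $\mu$ by gluing together probability measures $\mu_\Lambda$ on finite-volume configuration spaces $\Gamma_\Lambda:=\{\gamma:\gamma(X\setminus\Lambda)=0\}$, built from the restrictions $k^{(n)}|_{\Lambda^n}$. Note first that the $K$-transform is injective on $B_{\rm bs}$ via the finite Möbius inversion $G(\eta)=\sum_{\xi\subset\eta}(-1)^{|\eta|-|\xi|}(KG)(\xi)$, so the prescription $\ell(KG):=\langle\!\langle k,G\rangle\!\rangle$ defines a linear functional on $\mathcal{M}:=K(B_{\rm bs})\subset B_{\rm b}(\Gamma)$. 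Hypothesis (i) gives $\ell(1)=1$, while hypothesis (ii) says exactly that $\ell$ is nonnegative on the cone $K(B^\star_{\rm bs})$.

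For a compact $\Lambda\subset X$, the growth estimate encoded in $k\in\mathcal{K}_\alpha$ makes the standard Möbius-type series for the candidate density of $\mu_\Lambda$ with respect to a reference Poisson measure on $\Gamma_\Lambda$ absolutely convergent in $L^1$. Condition (ii), specialised to test functions $G$ supported inside $\Gamma_\Lambda$, should force this density to be nonnegative, while (i) normalises its integral to one. The consistency $\mu_{\Lambda'}\circ p_{\Lambda,\Lambda'}^{-1}=\mu_\Lambda$ for $\Lambda\subset\Lambda'$ is a direct algebraic consequence of the inversion formula, so Kolmogorov's extension theorem delivers a unique $\mu\in\mathcal{P}(\Gamma)$ whose local projections are the $\mu_\Lambda$. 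By construction $\mu(KG)=\langle\!\langle k,G\rangle\!\rangle$ for all $G\in B_{\rm bs}$, and then (\ref{X5}), (\ref{X11a}) identify the $k^{(n)}$ as the correlation functions of $\mu$; Ruelle's bound combined with (\ref{X6})--(\ref{X8}) places $\mu$ in $\mathcal{P}_{\rm exp}^\alpha$.

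For uniqueness I would use the exponential generating functionals $F^\theta$ of (\ref{X6a}). By Remark \ref{X1rk}(b) together with (\ref{X6b}) and dominated convergence, any two sub-Poissonian measures sharing all correlation functions satisfy $\mu_1(F^\theta)=\mu_2(F^\theta)$ for every $\theta\in C^+_{\rm cs}(X)$, and this family is measure-determining on $\Gamma$.

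The hardest step is extracting pointwise nonnegativity of the local densities from hypothesis (ii). The cone $B^\star_{\rm bs}$ is strictly larger than $\{G\in B_{\rm bs}:G\ge 0\}$, and the content of (ii) lies precisely in sign-changing $G$ for which $KG$ nevertheless remains pointwise nonnegative; plugging in pointwise-nonnegative $G$ into (ii) gives essentially no information. Turning the abstract positivity of $\ell$ on $K(B^\star_{\rm bs})$ into pointwise positivity of the Radon--Nikodym derivatives on $\Gamma_\Lambda$ requires constructing, for a given candidate local state $\eta\subset\Lambda$, a sequence of star-positive test functions whose $\ell$-values approximate that density from both sides. This approximation is the substance of Theorems 6.1--6.2 of \cite{Kuna} cited in the statement.
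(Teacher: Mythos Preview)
The paper does not supply its own proof of this proposition; it is stated as a known fact taken from \cite[Theorems 6.1, 6.2 and Remark 6.3]{Kuna}. Your sketch is essentially an outline of the argument in that reference---local densities via the M\"obius-inverted series, consistency of the projective family, Kolmogorov extension, and uniqueness through the Laplace-type functionals $F^\theta$---and you correctly identify that the substantive step (converting the cone positivity (ii) into pointwise nonnegativity of the local Radon--Nikodym densities) is exactly what is carried out in those cited theorems. So there is nothing to compare against: the paper defers entirely to \cite{Kuna}, and your proposal is a faithful summary of that approach, with the hard part honestly flagged rather than proved.
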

Let now $G:\Gamma_{\rm fin}\to \mathds{R}$ be such that each
$G^{(n)}$, $n\in \mathds{N}$, cf. (\ref{X10}), is a symmetric
element of $L^1(X^n)$. We denote its corresponding norm $\|G^{(n)}\|_{L^1}$ and
set
\begin{equation}
  \label{Y5}
 |G|_\alpha = |G(\varnothing)| + \sum_{n=1}^\infty \frac{1}{n!}
 e^{n\alpha} \|G^{(n)}\|_{L^1} = \int_{\Gamma_{\rm fin}} e^{\alpha
 |\eta|}|G(\eta)| \lambda ( d \eta),
\end{equation}
and also
\begin{equation}
  \label{Y6}
  \mathcal{G}_\alpha = \{G: |G|_\alpha < \infty\}, \qquad \alpha \in
  \mathds{R}.
\end{equation}
Thus, each  $\mathcal{G}_\alpha$ is a weighted $L^1$-type real
Banach space. Similarly as in (\ref{Y2}), we have
\begin{equation}
  \label{Y7}
  \mathcal{G}_{\alpha} \hookrightarrow  \mathcal{G}_{\alpha'},
  \qquad {\rm for} \ \ \alpha' < \alpha.
\end{equation}
However, here the embedding is also dense. Recall that the set of
functions $B_{\rm bs}$ is defined in Definition \ref{X1df}.
\begin{remark}
  \label{Y1rk}
Regarding the spaces $\mathcal{G}_\alpha$, $\alpha\in \mathds{R}$,
the following is true:
\begin{itemize}
  \item[(i)] For each $\alpha \in \mathds{R}$, $B_{\rm bs}$ is a
  dense subset of $\mathcal{G}_\alpha$.
  \item[(ii)] By (\ref{X6c}), (\ref{X14}) and (\ref{Y5}) one may write
\begin{equation}
  \label{Y8}
  |G|_\alpha = \pi_{e^\alpha} (K|G|),
\end{equation}
by which the $K$-map defined in (\ref{X4}) can be extended to
$\mathcal{G}_\alpha$ with an arbitrary $\alpha \in \mathds{R}$. In
this case, $K:\mathcal{G}_\alpha \to L^1(\Gamma, \pi_{e^\alpha})$.
\end{itemize}
\end{remark}

\section{The Results}

\label{Rsec}

\subsection{Solving the Fokker-Planck equation}

The Kolmogorov operator
$L$ introduced in (\ref{L}) is subject to the following
\begin{assumption}
  \label{1ass}
The parameters of $L$  satisfy: (a) $a(0)>0$; $m$, $a$ and $b$ are nonnegative
and continuous; (b) the following quantities are finite
\begin{gather}
\label{6} 
\sup_{x\in X}\frac{a(x)}{\psi(x)} =:
  \|a\|_\psi, \quad 
  \sup_{x\in X}b(x) =:
  \|b\|, \quad \sup_{x\in X}m(x) =:
  \|m\|,
\end{gather}
where $\psi (x)$ is as in \eqref{psi}.
\end{assumption}
According to \eqref{6} $a$ is integrable, cf. \eqref{psi1}; hence, 
\begin{equation}
 \label{6a}
 \int_X a(x) d x =:\langle a \rangle \leq \|a\|_\psi \langle \psi \rangle < \infty.
\end{equation}
Moreover, by the triangle inequality and \eqref{psi} we have
\begin{gather}
 \label{6b}
 a(x-y) \leq \|a\|_\psi \psi(y) \left(1 + \sum_{l=0}^{d+1} { d+1 \choose l} |x|^l \right) =: \|a\|_\psi \psi(y) \ell_a (x).
\end{gather}
By this estimate, it  follows that for each $\theta \in C^{+}_{\rm cs} (X)$, the following holds, see \eqref{Gas} and \eqref{6b},
\begin{eqnarray}
 \label{6c}
\forall \gamma \in \Gamma_* \quad  \sum_{x\in  \gamma} \theta(x) \sum_{y\in \gamma\setminus x} a(x-y) \leq \gamma(\psi) \|a\|_\psi\sum_{x\in \gamma} \theta (x) \ell_a(x) < \infty , 
\end{eqnarray}
since the support of $\theta$ is compact. 

Below we use the following functions of $t\geq 0$ and $x\in X$
\begin{equation}
  \label{Y}
  q_t(x) =e^{-m(x) t}, \qquad \varrho_t(x) = \left\{
  \begin{array}{ll} \left( 1- e^{-m(x) t}\right) \frac{b(x)}{m(x)},
  \quad &{\rm if} \ \ m(x) >0; \\[.3cm]
b(x) t, \quad &{\rm if} \ \ m(x) =0.
  \end{array}\right.
\end{equation}
Let us turn to defining solutions of (\ref{FPE}), which we
precede by the following reminder; see e.g., \cite[pages 112,
113]{EK}. A subset $\mathcal{C}\subset C_{\rm b}(\Gamma)$ is said to
be separating if for each $\mu_1, \mu_2\in \mathcal{P}$, the
equality $\mu_1(F) = \mu_2(F)$, that holds for all $F\in \mathcal{C}$,
implies $\mu_1 = \mu_2$. If $\mathcal{C}$ is closed under
multiplication and separates points of $\Gamma$, it is separating.
The latter property means that, for each $\gamma_1 \neq \gamma_2$,
one finds $F\in \mathcal{C}$ such that $F(\gamma_1) \neq
F(\gamma_2)$. 
\begin{remark}
\label{OCTrk}
The
equality $\mu_1(F) = \mu_2(F)$, which holds for all $F\in \mathcal{C}$,
implies $\mu_1(H) = \mu_2(H)$ for all $H$ taken from the linear span of $\mathcal{C}$. 
That is why we do not require that $\mathcal{C}$ be an algebra, cf. 
\cite[Theorem 4.5, page 113]{EK}.
\end{remark}
\begin{definition}
  \label{1df}
Fix  separating $\mathcal{F}\subset C_{\rm b}(\Gamma)$ and $\mu_0\in \mathcal{P}(\Gamma)$. 
A map $\mathds{R}_{+} \ni t \mapsto \mu_t \in \mathcal{P}(\Gamma)$
is said to be a solution of the Fokker-Planck equation (\ref{FPE})
for $(L, \mathcal{F}, \mu_0)$ if, for each $F\in \mathcal{F}$, the following is true:
\begin{itemize}
  \item[(i)]  $LF$ is absolutely $\mu_t$-integrable for Lebesgue-almost all
  $t$, and the map $t \mapsto \mu_t(LF)$ is measurable and Lebesgue-integrable on each $[0,T]$, $T>0$.
  \item[(ii)] The equality in
  (\ref{FPE}) holds for all $t\geq 0$.
\end{itemize}
\end{definition}
Obviously, the  domain $\mathcal{F}$ should be separating if one
strives for uniqueness of the solutions of (\ref{FPE}). Typically,
see, e.g., \cite[page 78]{Dawson}, generators $(L,\mathcal{F})$ are
chosen in such a way that $L:\mathcal{F}\to B_{\rm b}(\Gamma)$,
where the latter is the set of all bounded measurable functions
$F:\Gamma\to \mathds{R}$. In our case, however, it is barely
possible in view of the quadratic term in $L^{-}$. It might also be
clear from this definition that the proper choice of $\mathcal{F}$
is one of the main technical aspects of the current research.

For $\theta \in C_{\rm cs}^{+} (X)$, we set
\begin{equation}
  \label{X15}
  \varPhi^\theta (\gamma) = \sum_{x\in \gamma} \theta(x),
\end{equation}
and then
\begin{eqnarray}
  \label{X16}
  \varPhi^\theta_{\tau} (\gamma) = 
  \frac{\varPhi^\theta(\gamma)}{1+ \tau
  \varPhi^\theta(\gamma)} =  \int_0^{+\infty} \varPhi^\theta(\gamma)
  \exp\left(- \alpha\left[1+ \tau \varPhi^\theta (\gamma) \right] \right) d \alpha , \quad \tau \in(0,1/2].
\end{eqnarray}
Clearly  both $\varPhi^\theta$ and
$\varPhi^\theta_{\tau}$ are continuous, and also
\begin{equation*}
  0 \leq  \varPhi^\theta_{\tau} (\gamma)  \leq 1/\tau, \qquad \gamma\in \Gamma.
\end{equation*}
The obvious reason for introducing $\tau$ in \eqref{X16} is to pass from unbounded functions, as in \eqref{X15}, to bounded ones. Later, we prove in Lemma \ref{0lm} that we can take the limit $\tau \to 0$ and work directly with unbounded functions. The bound $\tau\leq 1/2$ -- needed to get an important property of the domain, see Lemma \ref{Y1lm} -- does not affect taking such limits. 

By $\vartheta$
we denote a finite multiset consisting of the elements of $C_{\rm cs}^{+}(X)$, see (\ref{X7}),
(\ref{X8}).  Define
\begin{equation}
  \label{M15}
 \varPsi^\vartheta_{\tau} (\gamma) = \prod_{\theta \in
 \vartheta} \varPhi^\theta_{\tau}(\gamma), \qquad \varPsi^\varnothing_{\tau} (\gamma) \equiv 1.
\end{equation}
Clearly, all $\varPsi^\vartheta_{\tau}$ are bounded and continuous. Thereafter, we set
\begin{equation}
  \label{X18a}
  \Theta =\{ \theta \in C_{\rm cs}^{+}(X):  \langle \theta \rangle \leq \langle \psi \rangle, \ \  \theta (x) \leq
  1 , \ x\in X\},
\end{equation}
see (\ref{X8}), \eqref{psi} and \eqref{psi1}, and
\begin{equation}
  \label{X18}
 \mathcal{F}_{\tau} =  \{\varPsi^\vartheta_{\tau}: {\rm all} \ {\rm possible} \ {\rm finite} \ \vartheta \subset
 \Theta\}, \qquad \mathcal{F}= \bigcup_{\tau\in  (0,1/2]} \mathcal{F}_{\tau}.
\end{equation}
Obviously, each $\mathcal{F}_{\tau}$ separates
points of $\Gamma$. For one can take $\theta\in \Theta$, who's support
contains some $x\in p(\gamma_1)$ and such that $\theta (y)
=0$ for all $y\in p(\gamma_2)$. Also, by the very definition
(\ref{M15}), each $\mathcal{F}_{\tau}$ is closed under
multiplication, and therefore separating; see Remark \ref{OCTrk} and \cite[Theorem 4.5,
page 113]{EK}. Thus, so is $\mathcal{F}$. The choice of the upper bound in \eqref{X18a} will be explained later. Here we just recall that $\psi(x)\leq 1$, and hence $\Theta$ is closed under multiplication.

Now we recall that the $K$-map and the spaces $\mathcal{G}_\alpha$ are defined in \eqref{X4} and \eqref{Y6}, respectively.
To proceed further, we introduce the following set of functions
\begin{equation}
 \label{NGa}
 \mathcal{F}_{\rm max} = \{F = KG: G\in \mathcal{G}_\alpha \ {\rm for} \ {\rm all} \  \alpha \in \mathds{R} \}, 
\end{equation}
see Remark \ref{Y1rk}. Clearly, $\Phi^\theta \in \mathcal{F}_{\rm max}$, see \eqref{X15}; hence, $\mathcal{F}_{\rm max}$  contains also unbounded functions. At the same time,
$\mathcal{F} \subset \mathcal{F}_{\rm max}$, see Lemma \ref{Y1lm} below. In view of these two facts, $\mathcal{F}_{\rm max}$ will serve as the maximal domain to which we extend $L$  from $\mathcal{F}$. 

For a compact $\Lambda \subset X$ and $\gamma\in \Gamma$, define
\begin{equation}
  \label{NG}
  N_\Lambda (\gamma) = \sum_{x\in \gamma} \mathds{1}_\Lambda (x)
  =\gamma(\Lambda),
\end{equation}
which is the total number of the elements of $\gamma$ contained
in $\Lambda$, cf. \eqref{Lambd}. 
Now we can
formulate our first statement. Recall that $F^\theta$ is defined in \eqref{X6b}.
\begin{theorem}
  \label{1tm}
  Let the parameters of the Kolmogorov operator $L$ introduced in
  (\ref{L}) satisfy Assumption \ref{1ass} and $\mathcal{F}$ be as
  in (\ref{X18}). Then, for each $\mu_0\in \mathcal{P}_{\rm exp}$, the Fokker-Planck
  equation for $(L, \mathcal{F} , \mu_0)$ has a unique solution
  such that $\mu_t \in \mathcal{P}_{\rm exp}$ for all $t\geq0$. This
  solution has the following properties:
 \begin{itemize}
  \item[(a)] For each $\theta \in C_{\rm cs}^{+}(X)$, 
  \begin{equation}
  \label{Y0}
\mu_t (F^\theta ) \leq \pi_{\varrho_t}  (F^\theta ) \mu_0^{q_t}
(F^\theta ), \qquad t >0,
\end{equation}
$\varrho_t$ and
$q_t$ are as in (\ref{Y}) and $\mu_0^{q_t}$ is the
$q_t$-thinning of $\mu_0$, see Definition \ref{Thdf}.
  \item[(b)] For each compact $\Lambda\subset X$ and $n\in \mathds{N}$, there
exists $C_{n,\Lambda}>0$ such that
\begin{equation}
  \label{NG1}
 \forall t > 0 \qquad  \mu_t (N^n_\Lambda) \leq C_{n,\Lambda},
\end{equation}
i.e., the moments of the observable (\ref{NG}) are globally bounded
in time.
\item[(c)] The sets defined in \eqref{X18} and \eqref{NGa} satisfy $\mathcal{F}\subset \mathcal{F}_{\rm max}$, and $\mu_t$ solves the Fokker-Planck equation \eqref{FPE} with each $F   \in  \mathcal{F}_{\rm max}$.
 \end{itemize}
 \end{theorem}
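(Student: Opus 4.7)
The overall plan is to lift the problem to the dual level of correlation functions, where \cite{KK} already provides a unique evolution, and then descend back to measures via Proposition \ref{Y1pn}. Given $\mu_0 \in \mathcal{P}_{\rm exp}$ with correlation function $k_{\mu_0}$, a formal adjoint computation applied to \eqref{FPE} tested against $F = KG$, combined with the duality $\mu(KG) = \langle\!\langle k_\mu, G\rangle\!\rangle$ from \eqref{X12}, yields a Cauchy problem $\dot k_t = L^\Delta k_t$ on the scale of Banach spaces $\mathcal{K}_\alpha$. By the main result of \cite{KK}, this dual problem admits a unique mild solution $t \mapsto k_t$ satisfying a Ruelle-type bound (with type $e^{\alpha(t)}$ that may grow in $t$). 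Proposition \ref{Y1pn} then produces, for each $t$, a unique $\mu_t \in \mathcal{P}_{\rm exp}$ with $k_{\mu_t} = k_t$, and the map $t \mapsto \mu_t$ is the candidate solution.

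To verify that $t \mapsto \mu_t$ actually solves the Fokker-Planck equation, and to handle part (c) simultaneously, I would establish $\mathcal{F} \subset \mathcal{F}_{\rm max}$ by exploiting the integral representation \eqref{X16} of $\varPhi^\theta_\tau$ together with the product structure \eqref{M15}. After interchanging the $\alpha$-integrations with the product and comparing the resulting expression with \eqref{X6b}, each $\varPsi^\vartheta_\tau$ can be written as $KG$ for a function $G$ whose $L^1$-norms in $\eta$ decay fast enough (thanks to the factor $e^{-\alpha}$ in \eqref{X16}) to place $G$ in every $\mathcal{G}_\alpha$. Once $\mathcal{F} \subset \mathcal{F}_{\rm max}$, the Fokker-Planck identity for $t \mapsto \mu_t$ reduces via \eqref{X12} to an integrated duality between $k_t$ and the preadjoint action of $L^\Delta$, which is built into the construction of $k_t$ in \cite{KK}; the same identity gives the FPE for all of $\mathcal{F}_{\rm max}$, completing (c).

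The main obstacle is uniqueness, since the quadratic term in $L^{-}$ prevents $LF$ from being bounded on $\Gamma$, forcing solutions to be defined only in the weak sense of Definition \ref{1df}. The plan is to invoke Lemma \ref{01lm}, where the sign structure $\pm L^\pm F \geq 0$ built into the design of $\mathcal{F}$ in \eqref{X18} (via the regularizer $\tau$) is decisive: testing the FPE against $\varPsi^\vartheta_\tau$ and letting the signs cooperate yields a Gronwall-type a priori bound that forces every solution into $\mathcal{P}_{\rm exp}$ with type uniform on compact time intervals. Once both competing solutions are sub-Poissonian, their correlation functions lie in the scale $\mathcal{K}_\alpha$ and solve the same dual Cauchy problem, so uniqueness from \cite{KK} gives $k^{(1)}_t = k^{(2)}_t$; since $\mathcal{F}$ is separating, this forces $\mu^{(1)}_t = \mu^{(2)}_t$.

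For the remaining properties, I would derive (a) at the correlation function level by comparing $k_t$ to the correlation functions of the explicit Poissonian birth-death dynamics obtained by dropping the nonnegative interaction term $\int a(x-y)(\gamma\setminus x)(dy)$ in $L^{-}$; the resulting dynamics decouples into independent pure birth with rate $b$ and pure death with rate $m$, whose generating functions evaluated at $F^\theta$ factor exactly as $\pi_{\varrho_t}(F^\theta)$ and $\mu_0^{q_t}(F^\theta)$ respectively (see \eqref{Y}), while the sign of the dropped term preserves the inequality direction in \eqref{Y0}. Property (b) then follows by taking $\theta = c \mathds{1}_\Lambda$ with $c>0$ small enough to fit (a possibly approximated form of) \eqref{X18a} and applying (a), producing a bound on $\mu_t(e^{c N_\Lambda})$ that is uniform in $t$, from which the polynomial moment bounds \eqref{NG1} follow by elementary estimates on $n! c^{-n}$.
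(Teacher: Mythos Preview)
Your overall strategy matches the paper's: construct the solution via the correlation-function evolution of \cite{KK}, verify it solves \eqref{FPE} through duality, localize any competing solution into $\mathcal{P}_{\rm exp}$ via Lemma~\ref{01lm}, and then argue uniqueness at the level of $k_t$. Parts (a) and (c) are essentially as in the paper. However, two steps in your outline are genuinely incomplete.

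\textbf{Uniqueness.} Once Lemma~\ref{01lm} places an arbitrary solution $\mu'_t$ in $\mathcal{P}_{\rm exp}$, testing \eqref{FPE} against $F^\vartheta=KG^\vartheta$ yields only the \emph{integrated} identity
\[
k_{\mu'_t}=k_{\mu_0}+L^\Delta_{\alpha'\alpha}\int_0^t k_{\mu'_s}\,ds,
\]
i.e.\ $k_{\mu'_t}$ is a \emph{mild} solution of the dual Cauchy problem. Proposition~\ref{X5pn} (the uniqueness from \cite{KK}) is stated for \emph{classical} solutions, so you cannot invoke it directly. The paper closes this gap by a variation-of-constants argument using the operators $Q_{\alpha\alpha'}(t)$: one differentiates $u\mapsto Q_{\alpha'\alpha_t}(v-u)\int_0^u q_s\,ds$ with $q_u=k_{\mu_u}-k_{\mu'_u}$, obtains $\int_0^v q_s\,ds=0$ on a short interval, and then iterates over a sequence of intervals whose lengths are shown to sum to $+\infty$. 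This step is where the scale structure $\mathcal{K}_{\alpha'}\hookrightarrow\mathcal{K}_\alpha$ and the explicit bound \eqref{X46a} are used essentially; your proposal skips it.

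\textbf{Property (b).} Deriving \eqref{NG1} from \eqref{Y0} fails when $m$ vanishes on part of $\Lambda$: in that case $\varrho_t(x)=b(x)t$, so $\pi_{\varrho_t}(F^\theta)=\exp(c\int_\Lambda\varrho_t)$ grows without bound in $t$, and no uniform-in-$t$ moment bound follows. The paper explicitly notes that \eqref{NG1} holds even if $m\equiv 0$; this relies on the quadratic competition term $a$ and is proved in \cite[Theorem~2.5]{KK}, not as a corollary of \eqref{Y0}.
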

Let us make some preliminary comments on this statement. A priori, we look for solutions among all $\mu
\in \mathcal{P}(\Gamma)$ satisfying item (i) of Definition \ref{1df} -- restricting only the choice of the initial state $\mu_0$. The result is that the solution is unique and lies in $\mathcal{P}_{\rm  exp}$; i.e., 
 the evolution leaves invariant the set of sub-Poissonian measures.   
Note that item  (i) of Definition \ref{1df} means
\begin{equation}
 \label{0ja}
\forall F\in \mathcal{F} \quad \forall T>0 \qquad  \int_0^T \mu_t(|LF|) d t < \infty.
 \end{equation}
The measure on the right-hand side of \eqref{Y0} is the convolution, see \cite[page 15]{Li}, which corresponds to the mixture of two states: (a) the Poisson state $\pi_{\varrho_t}$ describing the distribution of the newcomers; (b) the thinned initial state. The inequality in \eqref{Y0} indicates that the states of the aforementioned mixture dominate those of the population considered. It shows that 
the sign of the quadratic term in $L^{-}$ was taken into account
properly.  
Finally, the boundedness as in \eqref{NG1}, which holds also if $m(x)\equiv 0$, is of the same nature as the boundedness of $N_t$ in the Verhulst model. As  already mentioned,     
the set $\mathcal{F}_{\rm max}$ contains also unbounded functions which are $\mu_t$-integrable for all $t>0$, see  \eqref{X12}. Below, we use the following important corollary of the theorem stated above.   
\begin{remark}
\label{OCTrk2}
Let $\widetilde{\mathcal{F}}\subset \mathcal{F}_{\rm max}\cap B_{\rm b}(\Gamma)$ be separating. Let also $t\mapsto \tilde{\mu}_t$ be a solution of the Fokker-Planck equation for $(L,\widetilde{\mathcal{F}},\mu_0)$, $\mu_0 \in \mathcal{P}_{\rm exp}$. Then $\tilde{\mu}_t$ coincides with the unique solution of \eqref{FPE} for $(L,{\mathcal{F}},\mu_0)$, the existence of which is stated in Theorem \ref{1tm}.   
\end{remark}
Indeed, by item (c) of Theorem \ref{1tm}, there exists only one $\mu_t$ which solves \eqref{FPE} for all $F \in \mathcal{F}_{\rm max}$. Hence, $\mu_t$ and $\tilde{\mu}_t$ agree on $\widetilde{\mathcal{F}}$ and therefore coincide as the latter set is separating.  

\subsection{The Markov process}

Theorem \ref{1tm} yields the evolution of states $\mu_0 \to \mu_t$ of the model corresponding to the Kolmogorov operator \eqref{L}. A more comprehensive description of the stochastic evolution of this model can be obtained by constructing a Markov process. In this work, we follow the approach elaborated in \cite{KR,KR1} in which the process in question is obtained by solving a restricted martingale problem. The central role here is played by the Fokker-Planck equation, especially stated in Theorem \ref{1tm} uniqueness, the facts that its solutions lie in the class of sup-Poissonian measures, and that the unique solution satisfies \eqref{FPE} with all $F\in \mathcal{F}_{\rm max}$.

As just mentioned, the evolution $\mu_0 \to \mu_t$ related to \eqref{FPE} leaves invariant the set of measures $\mathcal{P}_{\rm exp}$ the elements of which have the property $\mu(\Gamma_*)=1$, see Remark \ref{Gasrk}. Therefore, it might be natural to construct a process with values in $\Gamma_*$, cf. \eqref{Cad}, such that its 
one-dimensional marginals solve the Fokker-Planck equation. Such a process will be obtained as a solution of the restricted martingale problem involving $L$, the domain of which should be consistent with the domain used in the Fokker-Planck equation. Thus, we start by setting
\begin{equation}
 \label{G1}
\widetilde{F}_v (\gamma) = \exp \left( - \sum_{x\in \gamma} v(x) \psi(x) \right), \qquad v\in C_{\rm b}^{+} (X). 
\end{equation}
Concerning such functions, the following is known; see \cite[Lemma 3.2.5 and Theorem 3.2.6, page 43]{Dawson}.
\begin{proposition}
 \label{G1pn}
There exists a countable set $\mathcal{V}\subset C_{\rm b}^{+}(X)$, which contains constants and is closed under addition, such that the set $\widetilde{\mathcal{F}} =\{ \widetilde{F}_v: v\in \mathcal{V} \}$ has the following properties:  
\begin{itemize}
 \item[(i)] The space of $\mathcal{B}(\Gamma_*)$-measurable functions is the bounded point-wise closure of $\widetilde{\mathcal{F}}$, see \cite[page 41]{Dawson}, and $\mathcal{B}(\Gamma_*)$ is generated by  $\widetilde{\mathcal{F}}$.
 \item[(ii)] $\widetilde{\mathcal{F}}$ is a strongly separating, and hence separating, subset of $C_{\rm b}(\Gamma_*)$. The former implies that this set is weak convergence determining. That is, for $\{\mu_n\}\subset \mathcal{P}(\Gamma_*)$ and $\mu \in\ \mathcal{P}(\Gamma_*)$, it follows that $\mu_n \Rightarrow \mu$ as $n\to \infty$ if and only if
 $\mu_n(\widetilde{F}_v) \to \mu(\widetilde{F}_v)$ for all $\widetilde{F}_v\in \widetilde{\mathcal{F}}$.  
\end{itemize}
\end{proposition}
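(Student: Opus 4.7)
The plan is to follow the scheme of \cite[Lemma 3.2.5 and Theorem 3.2.6]{Dawson}, adapting it to the metric space $(\Gamma_*,\rho)$ defined in \eqref{rho}. The cornerstone is the multiplicative identity $\widetilde{F}_{v_1}\widetilde{F}_{v_2}=\widetilde{F}_{v_1+v_2}$, which holds because $\sum_{x\in\gamma}(v_1+v_2)(x)\psi(x)=\sum_{x\in\gamma}v_1(x)\psi(x)+\sum_{x\in\gamma}v_2(x)\psi(x)$. Hence closing $\mathcal{V}$ under addition automatically closes $\widetilde{\mathcal{F}}$ under pointwise multiplication, and the requirement in the statement that $\mathcal{V}$ contain constants ensures that $\widetilde{\mathcal{F}}$ separates the empty configuration from all others.

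First I would construct $\mathcal{V}$ explicitly. Exploiting the decay of $\psi$ guaranteed by \eqref{psi1}, the unit ball of $C^1_{\rm L}$ is totally bounded for uniform convergence on the compact sets which carry essentially all the $\psi$-weighted mass; so by a diagonal argument one can extract a countable family $\mathcal{V}_0\subset C_{\rm b}^{+}(X)$ such that every $g\in C^1_{\rm L}$ is uniformly approximable (after a shift by a positive rational constant to ensure nonnegativity) by elements of $\mathcal{V}_0$ in a manner compatible with the metric $\rho$. One then takes $\mathcal{V}$ to be the smallest subset of $C_{\rm b}^{+}(X)$ containing $\mathcal{V}_0$, the positive rational constants, and closed under finite addition; this set is countable by construction and satisfies all the structural requirements of the statement.

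For (ii), separation of points is essentially direct from the definition of $\rho$ in \eqref{rho}: if $\gamma\neq\gamma'$, there exists $g\in C^1_{\rm L}$ with $\gamma(\psi g)\neq\gamma'(\psi g)$, and after approximation by an appropriate $v\in\mathcal{V}$ one concludes $\widetilde{F}_v(\gamma)\neq\widetilde{F}_v(\gamma')$ via the strict monotonicity of $e^{-\cdot}$. Strong separation is the heart of the matter: for fixed $\gamma$ and $\delta>0$, the condition $\rho(\gamma',\gamma)\geq\delta$ forces some $g\in C^1_{\rm L}$ to witness this inequality with margin at least $\delta$; I would then cover $C^1_{\rm L}$ by finitely many balls of radius $\delta/4$ (in the appropriate $\psi$-weighted sup-topology) using total boundedness, pick a center $v_i\in\mathcal{V}$ from each ball after a constant shift, and argue that at least one $v_i$ must yield a macroscopic gap $|\widetilde{F}_{v_i}(\gamma)-\widetilde{F}_{v_i}(\gamma')|\geq\eta(\delta,\gamma)>0$. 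This produces the finite tuple required by the definition of strongly separating in the sense of \cite[Ch.~3]{EK}, and the weak convergence determining property then follows automatically.

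Finally, (i) is a standard consequence of (ii) together with the multiplicative closure of $\widetilde{\mathcal{F}}$: the algebra generated by $\widetilde{\mathcal{F}}$ separates points, so by a Stone-Weierstrass argument on compact subsets of $\Gamma_*$ combined with the monotone class theorem, its bounded pointwise closure contains all bounded continuous, and then all bounded Borel, functions on $\Gamma_*$; in particular $\sigma(\widetilde{\mathcal{F}})=\mathcal{B}(\Gamma_*)$. The main obstacle in the whole argument is the total-boundedness step underlying strong separation: one must leverage the integrability of $\psi$ from \eqref{psi1} to tame the infinite-dimensional nature of $C^1_{\rm L}$ and reduce the supremum in \eqref{rho} to a controlled finite-dimensional approximation, so that finitely many test functions $v_i\in\mathcal{V}$ genuinely suffice to detect $\rho$-distance $\geq\delta$ uniformly over $\gamma'$.
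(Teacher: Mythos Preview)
The paper does not supply its own proof of this proposition: it is stated with the attribution ``see \cite[Lemma 3.2.5 and Theorem 3.2.6, page 43]{Dawson}'' and is quoted as a known fact about Laplace-type functionals on spaces of finite measures. So there is no original argument to compare against; the relevant benchmark is Dawson's proof itself.

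Your sketch is in the right spirit and indeed follows the Dawson scheme: exploit the multiplicative identity $\widetilde{F}_{v_1}\widetilde{F}_{v_2}=\widetilde{F}_{v_1+v_2}$ to get closure under multiplication, take a countable additive semigroup $\mathcal{V}$ rich enough to approximate the test class in the metric \eqref{rho}, and then deduce (i) from (ii) via Stone--Weierstrass plus a monotone-class argument. The one place where your outline is looser than it should be is the strong-separation step. You propose to cover $C^1_{\rm L}$ by finitely many balls ``in the appropriate $\psi$-weighted sup-topology'' and pick centers from $\mathcal{V}$; but $C^1_{\rm L}$ is not totally bounded in any uniform topology on $X=\mathds{R}^d$, and the reduction to compact sets carrying most of the $\psi$-mass is $\gamma$-dependent (since $\gamma(\psi)$ varies over $\Gamma_*$). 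What actually makes the argument work, and what Dawson uses, is that the map $\gamma\mapsto\psi\gamma$ embeds $\Gamma_*$ into the space of finite positive measures on $X$ with its weak topology, and on that space the Laplace functionals $\nu\mapsto\exp(-\nu(v))$ are known to be strongly separating for a countable family of bounded continuous $v$'s; the finitely many $v_i$ witnessing strong separation at $(\gamma,\delta)$ come from this abstract result rather than from an explicit covering of $C^1_{\rm L}$. With that adjustment your outline is correct and matches the cited source.
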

It is clear that $C^{-1}_{v,\mu}:=\mu (\widetilde{F}_v)>0$ for each $v\in\mathcal{V}$ and $\mu \in\mathcal{P}(\Gamma_*)$.  
\begin{proposition}
 \label{G2pn}
 Along with the properties just mentioned, the set $\widetilde{\mathcal{F}}$ has the following ones: (a) for each $\mu\in \mathcal{P}_{\rm exp}$ and $v\in\mathcal{V}$, the measure $\mu_v = C_{v,\mu} \widetilde{F}_v \mu$ is in $\mathcal{P}_{\rm exp}$, and the types of these measures, see Definition \ref{X3df}, satisfy $\varkappa_{\mu_v} = \max\{C_{v,\mu} \varkappa_\mu; \varkappa_\mu\}$; (b) $\widetilde{\mathcal{F}}\subset \mathcal{F}_{\rm max}$, where the latter set is defined in \eqref{NGa}; (c) for each $\mu\in \mathcal{P}_{\rm exp}$ and $F\in \widetilde{\mathcal{F}}$, it follows that $\mu(|LF|)< \infty$.
\end{proposition}
\begin{proof}
 The proof of claim (a) readily follows by \eqref{X8}. To prove (b), we have to show that $\widetilde{F}_v = K G_v$ with $G_v \in \mathcal{G}_\alpha$ for all $\alpha \in \mathds{R}$. To this end, we use the function $\xi\mapsto e(\xi;h)$ introduced in \eqref{Y0a}, which is measurable and bounded for every $h\in B_{\rm b}(X)$.  
 Write, cf. \eqref{X4},
 \begin{gather}
  \label{G2}
  \widetilde{F}_v (\gamma) = \prod_{x\in \gamma} (1+ h_v(x)) = \sum_{\xi \Subset \gamma} e(\xi;h_v) = (K e(\cdot;h_v)) (\gamma), \\[.2cm] \nonumber h_v (x) := e^{-v(x) \psi(x)}-1.
 \end{gather}
Then, cf. \eqref{Y8}, 
\begin{gather}
 \label{G3}
 |e(\cdot;h_v)|_\alpha= \int_{\Gamma_{\rm fin}} e^{\alpha |\xi|} \prod_{x\in \xi} \left(1-e^{-v(x) \psi(x)}\right) \lambda (d\xi)\\[.2cm] \nonumber \leq \int_{\Gamma_{\rm fin}} e^{\alpha |\xi|} \left(\prod_{x\in \xi} v(x)\psi(x) \right)  \lambda (d\xi)\\[.2cm]  \leq \exp\left(e^\alpha \|v\|\langle \psi \rangle \right), \qquad \|v\|= \sup_{x\in X} v(x),\nonumber
\end{gather}
which completes the proof of item (b). Next, by \eqref{G1}  one gets
\begin{eqnarray}
 \label{Agata}
 \widetilde{F}_v (\gamma\cup x) -  \widetilde{F}_v (\gamma) & = & - \widetilde{F}_v (\gamma)\left[1- e^{-v(x) \psi(x) } \right], \\[.2cm] \nonumber
  |\widetilde{F}_v (\gamma\cup x) -  \widetilde{F}_v (\gamma)| & \leq & v(x) \psi(x),
\end{eqnarray}
cf. \eqref{G3}.
Then by \eqref{L} it follows that
\begin{equation}
 \label{Agata1}
 |L^{+} \widetilde{F}_v (\gamma)| \leq \int_X b(x) v(x) \psi(x) dx \leq \|b\| \|v\| \langle \psi \rangle, 
\end{equation}
see \eqref{psi1}. In dealing with $L^{-}\widetilde{F}_v$, we first take $\widetilde{F}_{v_n}$ with  $v_n = v \mathds{1}_{\Delta_n}$, $\Delta_n:=\{x\in X: |x|\leq n\}$. In this case, by \eqref{Agata} and \eqref{6c} we get
\begin{gather}
 \label{Agata2}
 |L^{-} \widetilde{F}_{v_n} (\gamma)| \leq \|m\| \|v\|  \sum_{x\in \gamma} \psi(x) + \sum_{x\in \gamma} v_n(x) \psi(x) \sum_{y\in \gamma \setminus x} a(x-y)\qquad \\[.2cm] \nonumber \leq \|m\| \|v\| \gamma(\psi) + \gamma(\psi) \|a\|_\psi \sum_{x\in \gamma} v_n (x) \ell_a (x) < \infty.
\end{gather}
At the same time, by \eqref{X12}, \eqref{X13}, the first line in \eqref{Agata2} and \eqref{6a} it follows that
\begin{eqnarray}
 \label{Agata3}
 \mu (|L^{-} \widetilde{F}_{v_n}|)& \leq & \|m\| \|v\| \int_X k^{(1)}_\mu (x) dx + \int_{X^2} k_\mu^{(2)} (x,y) v_n(x) \psi(x) a(x-y) d x dy \\[.2cm] \nonumber & \leq & \|v\|\varkappa_\mu \langle \psi \rangle (\|m\| + \varkappa_\mu \langle a \rangle).
\end{eqnarray}
By the monotone convergence theorem, one then gets that 
\[
 \mu (|L^{-} \widetilde{F}_{v}|) \leq {\rm RHS}\eqref{Agata3}, 
\]
which together with the estimate in \eqref{Agata1} yields the proof of claim (c).
\end{proof}
\begin{remark}
 \label{Agatark}
 By \eqref{Agata3}, the following extension of claim (c) of Proposition \ref{G2pn} can be obtained. For a subset, $\mathcal{P}\subset \mathcal{P}_{\rm exp}$, assume that $\sup_{\mu \in \mathcal{P}} \varkappa_\mu =:\varkappa < \infty$. Then 
 \[
  \sup_{\mu \in {\mathcal{P}}} \mu (|L\widetilde{F}_{v}|) \leq \|v\|\langle \psi \rangle (\|b\| +\|m\|{\varkappa} + \langle a \rangle {\varkappa}^2).
 \]
 \end{remark}
Fix now some $s\geq 0$, $t_1, t_2$, $t_2 > t_1 \geq s$, $\widetilde{F}_{v}$ as in \eqref{G1} and consider
\begin{equation}
 \label{Agata4}
 {\sf Q}^{\pm} (\bar{\gamma})=  \int_{t_1}^{t_2} (L^{\pm}\widetilde{F}_{v})(\varpi_{u} (\bar{\gamma}))   du, \qquad {\sf Q} = {\sf Q}^{+} + {\sf Q}^{-} .
\end{equation}
Clearly, ${\sf Q}$ is $\sigma(\mathfrak{F}_{[t_1,+\infty)})$-measurable. 
By the first line of \eqref{Agata} and \eqref{Agata1},  ${\sf Q}^{+}$ is bounded. However, $-{\sf Q}^{-}$ may increase ad infinitum.
\begin{proposition}
 \label{Agatapn}
 Let $P\in \mathcal{P}(\mathfrak{D}_{[0,+\infty)}(\Gamma_*))$ be such that  $P \circ \varpi^{-1}_u \in \mathcal{P}_{\rm exp}$ for all $u \geq 0$. Moreover, for each $t>0$, assume that $\sup_{u\in [0,t]} \varkappa_u =:\varkappa<\infty$, where $\varkappa_u$ is the type of $P \circ \varpi^{-1}_u$, see Definition \ref{X3df}.  Then $P (|{\sf Q}|)< \infty$. 
\end{proposition}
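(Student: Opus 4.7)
The plan is to reduce the claim to the pointwise $L^{1}$-bound on $|L\widetilde{F}_{v}|$ already recorded in Remark~\ref{Agatark}, after swapping the integral over $u$ with the expectation under $P$ via Tonelli's theorem. Concretely, since $|{\sf Q}|\leq {\sf Q}^{+}+|{\sf Q}^{-}|$, it suffices to show that each summand has finite $P$-expectation.

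First I would verify the measurability needed to apply Tonelli. The function $L^{\pm}\widetilde{F}_{v}:\Gamma_{*}\to \mathds{R}\cup\{\pm\infty\}$ is $\mathcal{B}(\Gamma_{*})$-measurable (the integrals appearing in \eqref{L} are limits of partial sums of measurable functions, and by Assumption~\ref{1ass} combined with the bound \eqref{6c} these sums are well-defined on $\Gamma_{*}$). The evaluation map $(u,\bar\gamma)\mapsto \varpi_{u}(\bar\gamma)$ is jointly measurable on $[t_{1},t_{2}]\times \mathfrak{D}_{[0,+\infty)}(\Gamma_{*})$ by the standard fact for cadlag paths in a Polish space (right-continuity plus separability; see \cite[Chapter 4]{EK}). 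Therefore $(u,\bar\gamma)\mapsto |L^{\pm}\widetilde{F}_{v}|(\varpi_{u}(\bar\gamma))$ is jointly measurable, which in particular makes ${\sf Q}$ well-defined and $\mathfrak{F}_{[t_{1},+\infty)}$-measurable, and legitimizes the Fubini/Tonelli interchange below.

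Second, I would apply Tonelli to obtain
\begin{equation*}
P(|{\sf Q}^{\pm}|)\;\leq\;\int_{t_{1}}^{t_{2}} P\bigl(|L^{\pm}\widetilde{F}_{v}|\circ \varpi_{u}\bigr)\,du\;=\;\int_{t_{1}}^{t_{2}} \mu_{u}\bigl(|L^{\pm}\widetilde{F}_{v}|\bigr)\,du,
\end{equation*}
where $\mu_{u}:=P\circ \varpi_{u}^{-1}\in \mathcal{P}_{\rm exp}$. By hypothesis the family $\mathcal{P}:=\{\mu_{u}: u\in [0,t_{2}]\}$ satisfies $\sup_{u\in[0,t_{2}]}\varkappa_{\mu_{u}}=\varkappa<\infty$. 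Now Remark~\ref{Agatark} (which is itself a direct consequence of the estimates \eqref{Agata1} and \eqref{Agata3} combined) yields
\begin{equation*}
\sup_{u\in[0,t_{2}]} \mu_{u}(|L\widetilde{F}_{v}|)\;\leq\; \|v\|\langle \psi\rangle\bigl(\|b\|+\|m\|\varkappa+\langle a\rangle \varkappa^{2}\bigr),
\end{equation*}
and the same bound controls each of $\mu_{u}(|L^{\pm}\widetilde{F}_{v}|)$ separately, by splitting \eqref{Agata1}--\eqref{Agata3} according to the sign. Combining these pieces,
\begin{equation*}
P(|{\sf Q}|)\;\leq\; (t_{2}-t_{1})\,\|v\|\langle \psi\rangle\bigl(\|b\|+\|m\|\varkappa+\langle a\rangle \varkappa^{2}\bigr)\;<\;\infty,
\end{equation*}
which is the desired conclusion.

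The only genuinely delicate point is the joint measurability of $(u,\bar\gamma)\mapsto (L^{-}\widetilde{F}_{v})(\varpi_{u}(\bar\gamma))$, because $L^{-}\widetilde{F}_{v}$ is only a priori $[0,+\infty]$-valued (it can fail to be finite on the whole of $\Gamma_{*}$). However, since $\mu_{u}(|L^{-}\widetilde{F}_{v}|)<\infty$ for every $u$ by Proposition~\ref{G2pn}(c), this function is $\mu_{u}$-a.e.\ finite for every $u$, which together with the Tonelli estimate above shows that ${\sf Q}^{-}$ is in fact $P$-a.s.\ finite, and then the bound on $P(|{\sf Q}|)$ follows as stated.
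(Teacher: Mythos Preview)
Your proof is correct and follows essentially the same route as the paper: reduce to $\int_{t_1}^{t_2}\mu_u(|L^{\pm}\widetilde F_v|)\,du$ via Tonelli, then invoke the uniform bound of Remark~\ref{Agatark}. The only difference is cosmetic: the paper first truncates $v$ to $v_n=v\mathds{1}_{\{|x|\le n\}}$ so that $L^{-}\widetilde F_{v_n}$ is finite everywhere on $\Gamma_*$ (cf.~\eqref{Agata2}), applies Fubini to the truncated version, and then passes to the limit by monotone convergence, whereas you apply Tonelli directly to the $[0,+\infty]$-valued integrand---which is perfectly legitimate since $L^{-}\widetilde F_v$ has a definite sign and the bound from Proposition~\ref{G2pn}(c) already absorbed that truncation argument.
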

\begin{proof}
 As mentioned above,  ${\sf Q}^{+}$ is bounded; hence, it remains to prove that $P(|{\sf Q}^{-}|)< \infty$. First, as in \eqref{Agata2} we take 
 \begin{equation*}
  {\sf Q}^{-}_{n} (\bar{\gamma})=  \int_{t_1}^{t_2} (L^{-}\widetilde{F}_{v_n})( \varpi_{u} (\bar{\gamma}))   du ,\qquad n\in \mathds{N}.
 \end{equation*}
Note that $|{\sf Q}^{-}_{n} (\bar{\gamma})| = - {\sf Q}^{-}_{n} (\bar{\gamma})$, see \eqref{Agata}, and   ${\sf Q}^{-}_{n}: \mathfrak{D}_{[s,+\infty)}(\Gamma_*) \to \mathds{R}$, see \eqref{Agata2}. By the assumption of this statement, it follows that 
$P\circ \varpi^{-1}_u =:\mu_u \in \mathcal{P}_{\rm exp}$; hence,
\begin{gather}
 \label{Agata6}
 \int_{t_1}^{t_2} P (|(L^{-}\widetilde{F}_{v_n})\circ \varpi_{u}|) du = \int_{t_1}^{t_2} (P \circ \varpi^{-1}_u )( |(L^{-}\widetilde{F}_{v_n})|) du \\[.2cm] \nonumber = \int_{t_1}^{t_2} \mu_u (|(L^{-}\widetilde{F}_{v_n})|) du \leq (t_2-t_1)\|v\|\varkappa \langle \psi \rangle (\|m\| + \varkappa \langle a \rangle ),
\end{gather}
see \eqref{Agata3} and Remark \ref{Agatark}. By the Tonelli and Fubini theorems, see \cite[Theorems 4.4 and 4.5, page 91]{Brezis}, it then follows 
\[
 P (| {\sf Q}^{-}_{n}|) \leq {\rm RHS}\eqref{Agata6},
\]
which, by the monotone convergence theorem, yields the proof. 
\end{proof}
For the same $s$, $t_1$ and $t_2$ as in \eqref{Agata4}, let ${\sf J}: \mathfrak{D}_{[s,+\infty)}\to \mathds{R}$ be bounded and $\mathfrak{F}_{s,t_1}$-measurable. Then for $F\in \widetilde{\mathcal{F}}$, we define
\begin{equation}
 \label{Cad1}
 {\sf H} (\bar{\gamma}) = \left[F(\varpi_{t_2} (\bar{\gamma})) - F(\varpi_{t_1} (\bar{\gamma})) - \int_{t_1}^{t_2} (LF)(\varpi_{u} (\bar{\gamma}))   du \right]{\sf J}(\bar{\gamma}). 
\end{equation}
The next definition is an adaptation of the corresponding definition in \cite[Sect. 5.1, pages 78, 79]{Dawson}, see also \cite[Definition 3.3]{KR}. Here, for $s\geq 0$ and $\mu\in \mathcal{P}_{\rm exp}$, we deal with probability measures on $\mathfrak{D}_{[s,+\infty)}(\Gamma_*)$, cf. Proposition \ref{Agatapn}. 
\begin{definition}
 \label{Cad1df}
 A family of probability measures $\{P_{s,\mu}: s\geq 0, \mu\in \mathcal{P}_{\rm exp}\}$ is said to be a solution of the restricted martingale problem if, for all $s\geq 0$ and $\mu\in \mathcal{P}_{\rm exp}$, the following is true: (a) $P_{s,\mu} \circ \varpi^{-1}_s = \mu$; (b)  $P_{s,\mu} \circ \varpi^{-1}_u \in \mathcal{P}_{\rm exp}$ for all $u >s$; (c) for all $t>s$, the types $\varkappa_u$  of $P_{s,\mu} \circ \varpi^{-1}_u$ satisfy $\sup_{u\in [s, t]} \varkappa_u < \infty$; (d) $P_{s,\mu} ({\sf H}) =0$, which holds for $\sf H$ as in \eqref{Cad1} with each $F\in \widetilde{\mathcal{F}}$ and every bounded function  ${\sf J}: \mathfrak{D}_{[s,+\infty)}(\Gamma_*) \to \mathds{R}$, which is $\mathfrak{F}_{s,t_1}$-measurable, $t_1>s$, see \eqref{Cad}. The restricted martingale problem is said to be well-posed if it has a unique solution in the following sense:  if $\{P_{s,\mu}: s\geq 0, \mu\in \mathcal{P}_{\rm exp}\}$ and $\{P'_{s,\mu}: s\geq 0, \mu\in \mathcal{P}_{\rm exp}\}$ solve the problem, then all finite dimensional marginals of $P_{s,\mu}$ and $P'_{s,\mu}$ coincide for all $s$ and $\mu$, see \cite[page 182]{EK}.  
\end{definition}
\begin{remark}
 \label{Finrk}
Concerning the notions introduced in Definition \ref{Cad1df}, one should remark the following:
\begin{itemize}
\item[(a)] By Proposition \ref{Agatapn}, $\sf H$ as given in \eqref{Cad1} is absolutely $P_{s, \mu}$-integrable for each $s$ and $\mu\in \mathcal{P}_{\rm exp}$. 
 \item[(b)]  
  The map $[0, +\infty) \ni t \mapsto P_{0,\mu_0} \circ \varpi^{-1}_t$, $t>0$, solves the Fokker-Planck equation for $(L,\mathcal{F}, \mu_0)$. Indeed,  the map in question solves \eqref{FPE} with each $F\in \widetilde{\mathcal{F}}$, which can be obtained by taking ${\sf J} \equiv 1$ and interchanging integrations as in the proof of Proposition \ref{Agatapn}. Then it coincides with the solution described in Theorem \ref{1tm}, see Remark \ref{OCTrk2}.  
  \item[(c)] The particular form of the domain $\mathcal{F}$  (resp. $\widetilde{\mathcal{F}}$)  has been tailored to the needs of proving Theorem \ref{1tm} (resp. Theorem  \ref{2tm}). At the same time, the set $\mathcal{F}_{\rm max}$ is a common `roof' for the two domains, see Remark \ref{OCTrk}.
    \item[(d)] The adjective ``restricted'' points to condition (b) of Definition \ref{Cad1df} which forces the 
    one-dimensional marginals to be in $\mathcal{P}_{\rm exp}$.  
\item[(d)]   The function $\sf J$ in \eqref{Cad1} can be taken in the form
\begin{equation}
 \label{Cad3}
 {\sf J} (\bar{\gamma}) = J_1 ( \varpi_{s_1}(\bar{\gamma}) ) \cdots J_m ( \varpi_{s_m}(\bar{\gamma})),
\end{equation}
with all possible choices of $m\in \mathds{N}$ and $J_1, \dots , J_m \in \widetilde{\mathcal{F}}$, see eq. (3.4) on page 174 of \cite{EK} and the proof of Theorem 5.1.3 on page 81 of \cite{Dawson}.
\end{itemize}
\end{remark}
\begin{theorem}
 \label{2tm}
 Let $L$ and $\widetilde{\mathcal{F}}$ be as \eqref{L} and Proposition \ref{G1pn}, respectively. Then
 \begin{itemize}
  \item[(a)] the restricted martingale problem has a unique solution in the sense of Definition \ref{Cad1df}; 
  \item[(b)] the stochastic process related to the family
  \[
   \{\mathfrak{D}_{[s,+\infty)}(\Gamma_*), \mathfrak{F}_{s,+\infty}, \{\mathfrak{F}_{s,t}:t\geq s\}, \{P_{s,\mu}: s\geq 0, \mu\in \mathcal{P}_{\rm exp}\}: s\geq 0 \}
  \]
is Markov.
\end{itemize}
\end{theorem}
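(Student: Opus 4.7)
The overall strategy for Theorem \ref{2tm} is the scheme of \cite{KR, KR1}: construct $P_{s,\mu}$ by a tightness/compactness argument on approximating path measures, prove uniqueness of finite-dimensional marginals by reducing them inductively to one-dimensional marginals via the martingale property with the weight $\sf J$ of \eqref{Cad3}, and deduce the Markov property from well-posedness. The two key inputs are Theorem \ref{1tm} (uniqueness and stability of sub-Poissonian FPE solutions, including its extension to $\mathcal{F}_{\rm max} \supset \widetilde{\mathcal{F}}$ from claim (c)) and the stability of $\mathcal{P}_{\rm exp}$ under bounded nonnegative reweighting (item (c) of Remark \ref{X1rk}).

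For \textbf{existence}, I would approximate $L$ by $L^{(n)}$ obtained by localizing $b$, $m$, $a$ to the ball $\Lambda_n = \{|x|\le n\}$ and work with the finite-volume analogue restricted to configurations in $\Lambda_n$. Each $L^{(n)}$ generates a standard pure jump Markov process, producing path measures $P^{(n)}_{s,\mu}$ on $\mathfrak{D}_{[s,+\infty)}(\Gamma_*)$ whose one-dimensional marginals are the finite-volume FPE solutions. Tightness of $\{P^{(n)}_{s,\mu}\}$ follows by the Aldous--Rebolledo criterion on the Polish space $(\Gamma_*, \rho)$: the moment bound in claim (b) of Theorem \ref{1tm}, verified uniformly in $n$ via the comparison principle underlying \eqref{Y0}, controls the marginals, and the martingale structure applied to $\widetilde{F}_v \in \widetilde{\mathcal{F}}$ (whose $L$-images are dominated by Proposition \ref{G2pn}(c) and Remark \ref{Agatark}) controls the oscillations. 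Any weak limit $P_{s,\mu}$ satisfies (a), (b), (c) of Definition \ref{Cad1df} because the limit marginals lie in $\mathcal{P}_{\rm exp}$ with a bound on $\varkappa$ inherited from Theorem \ref{1tm}; the martingale identity $P_{s,\mu}({\sf H})=0$ is obtained by passing to the limit in the finite-volume version, where the dominated convergence step uses precisely the integrability provided by Proposition \ref{Agatapn}.

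For \textbf{uniqueness}, let $P, P'$ be two solutions with the same $(s,\mu)$. Taking ${\sf J}\equiv 1$ in (d) of Definition \ref{Cad1df} shows, together with Fubini as in \eqref{Agata6}, that both one-dimensional marginals $t \mapsto P\circ \varpi_t^{-1}$ and $t\mapsto P'\circ \varpi_t^{-1}$ solve the Fokker--Planck equation \eqref{FPE} with each $F\in \widetilde{\mathcal{F}}$. Combined with Theorem \ref{1tm}(c) and $\widetilde{\mathcal{F}}\subset \mathcal{F}_{\rm max}$ from Proposition \ref{G2pn}(b), together with the type-bound (c) of Definition \ref{Cad1df}, I would conclude equality of one-dimensional marginals (using that $\widetilde{\mathcal{F}}$ is measure determining by Proposition \ref{G1pn}(ii) to convert the integral identity into pointwise agreement as measures). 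To propagate to $m$-dimensional marginals, fix $s\le s_1 < \cdots < s_m < t_1 < t_2$, positive $J_1,\ldots,J_m \in C_{\rm b}^{+}(\Gamma_*)$, set ${\sf J}$ as in \eqref{Cad3}, and normalize ${\sf J}\,P$ by its total mass. The resulting probability measure on $\mathfrak{D}_{[t_1,+\infty)}(\Gamma_*)$ has initial law given by the weighted measure ${\sf J}\cdot(P\circ \varpi_{t_1}^{-1})$ up to normalization, which by Remark \ref{X1rk}(c) lies in $\mathcal{P}_{\rm exp}$ with controllable type; the martingale identity then says this weighted law starting from $t_1$ itself solves a restricted martingale problem, hence by the one-dimensional uniqueness already proved, its marginal at $t_2$ is determined by the weighted initial datum. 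An inductive/monotone-class argument on $m$ then forces $P$ and $P'$ to agree on all cylinder sets.

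The \textbf{Markov property} of claim (b) follows from well-posedness by the standard reasoning of \cite[Theorem 4.4.2]{EK}: the regular conditional distribution of $P_{s,\mu}$ given $\mathfrak{F}_{s,r}$, restricted to $\mathfrak{D}_{[r,+\infty)}(\Gamma_*)$, satisfies the martingale identity at time $r$ and must coincide with $P_{r,\delta_{\varpi_r}}$. The genuine obstacle here is that $\delta_{\varpi_r(\bar{\gamma})}$ is generically not in $\mathcal{P}_{\rm exp}$, so the uniqueness of Definition \ref{Cad1df} does not literally apply to such initial data. As in \cite{KR, KR1}, I would circumvent this by proving uniqueness for an enlarged class of initial conditions of the form $\delta_\gamma$ with $\gamma \in \Gamma_*$ through an approximation: the relevant test quantities $P(\widetilde{F}_v)$ depend on initial data only through integrals of correlation functions, which are continuous under weak approximations of $\delta_\gamma$ by smoothed sub-Poissonian measures carried by neighbourhoods of $\gamma$, and the same weighting trick used in the uniqueness argument transfers the Markov identity from sub-Poissonian initial data to Dirac initial data. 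The two genuine obstacles of the whole program are therefore: the $n$-uniform tightness (where the quadratic negative term in $L^{-}$ creates difficulties that are only overcome thanks to \eqref{Y0} and Theorem \ref{1tm}(b)), and this Dirac approximation step in the Markov property argument.
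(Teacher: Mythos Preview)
Your overall strategy—approximate, prove tightness, pass to the limit in the martingale identity, and deduce uniqueness of finite-dimensional marginals inductively from Theorem \ref{1tm}—matches the paper's. The uniqueness argument in particular (reweighting by ${\sf J}$, observing that the reweighted one-dimensional marginals again solve a Fokker--Planck equation in $\mathcal{P}_{\rm exp}$, and invoking Theorem \ref{1tm}) is essentially what the paper does around \eqref{u46}.

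The main difference is in the approximation. The paper does \emph{not} localize to balls $\Lambda_n$; instead it multiplies $b$, $m$, $a$ by powers of $\psi_\sigma(x)=(1+\sigma|x|^{d+1})^{-1}$, $\sigma\in(0,1]$, see \eqref{z2}--\eqref{z3}. This keeps the state space $\Gamma_*$ fixed and, crucially, keeps each modified operator $L_\sigma$ within the scope of Assumption \ref{1ass}, so that Theorem \ref{1tm} and the entire correlation-function calculus apply verbatim to $L_\sigma$. At the same time $\int b_\sigma\,dx<\infty$ and the total jump rate $R_\sigma(\gamma)$ is finite on all of $\Gamma_*$, which allows the Thieme--Voigt perturbation theorem to produce a genuine stochastic semigroup $S^\sigma$ on signed measures (Lemma \ref{7lm}) and hence transition functions $p^\sigma_t$. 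The key step is Lemma \ref{CPlm}, which identifies $S^\sigma(t)\mu_0$ with the unique sub-Poissonian Fokker--Planck solution $\mu^\sigma_t$; this is what guarantees that the approximating marginals lie in $\mathcal{P}_{\rm exp}$ with $\sigma$-uniform type bounds, and what makes the comparison $L^\Delta - L^\Delta_\sigma$ in \eqref{z8}--\eqref{z9} tractable. Your ball-cutoff scheme would also make jump rates finite, but the approximating processes would live on different state spaces, the cutoff operators would not satisfy Assumption \ref{1ass} in the same form, and you would have to redo the identification of semigroup and FPE solutions from scratch.

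For tightness the paper uses Chentsov-type estimates on $\widehat{W}^\sigma(t_1,t_2,t_3)$ (Lemma \ref{u1lm} and Proposition \ref{u1pn}) rather than Aldous--Rebolledo; these are computed directly from $p^\sigma_t$ via repeated use of \eqref{u7} and the correlation-function bound \eqref{u23}, with the $\sigma$-uniformity built in. Finally, your concern about $\delta_\gamma\notin\mathcal{P}_{\rm exp}$ is legitimate, but the paper does not approximate Dirac measures: the Markov property is inherited from the approximating processes, which are Markov by construction via \eqref{EKP}, combined with the uniqueness of the limit, following \cite{KR,KR1}; part (b) is only asserted for initial laws $\mu\in\mathcal{P}_{\rm exp}$.
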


\subsection{The scheme of the proof of both theorems and comments}

\subsubsection{Concerning Theorem \ref{1tm}}

As mentioned above, the existence of a map $t\mapsto \mu_t \in \mathcal{P}_{\rm exp}$, which describes the evolution of states of the model with the generator \eqref{L}, was proved in \cite{KK}. It was done by constructing the evolution of the corresponding correlation functions; see Proposition \ref{X6pn} below. At the same time, it has remained unclear whether this map describes the evolution in question in a unique way -- obviously, due to the lack of a `canonical' way of constructing solutions, e.g., by means of a $C_0$-semigroup. Moreover, the domination expressed in \eqref{Y0} points to the possibility of constructing the evolution $t\mapsto \mu_t$ by standard methods: first, for finite systems by approximating $b(x)$ to provide $\int b(x) dx < \infty$; then, taking the corresponding limit. However, this way would not provide tools to prove uniqueness, nor to control the asymptotic properties of the distributions $\{p_{\Lambda,\mu_t}(n)\}$.

Our approach is based on the expectation that the Fokker-Planck equation \eqref{FPE} -- being a weaker version of the evolution equation -- would be more accessible by existing methods. In this case, however, the weakness just mentioned imposes the necessity of
\emph{specifying} its solutions, which we do by means of the triple $(L, \mathcal{F}, \mu_0)$, see  Definition  \ref{1df}. In view of this, the choice of $\mathcal{F}$ and the class of initial states becomes equally important as the choice of the model itself. 
As a benefit, one can raise and ask the question of uniqueness of solutions understood as the coincidence of any two of them specified by the same triple. In Theorem \ref{1tm} we state that the solution constructed in \cite{KK} is the unique solution of the Fokker-Planck equation \eqref{FPE} corresponding to the triple $(L, \mathcal{F}, \mu_0)$ with $\mu_0\in \mathcal{P}_{\rm exp}$. This is done according to the following scheme.           
\begin{itemize}
 \item To be able to control the sign of the quadratic term in $L^{-}$, we construct functions $F:\Gamma\to \mathds{R}$, see \eqref{X18}, in such a way that $\pm L^{\pm} F \geq 0$, which is done in subsect. 4.2, see \eqref{0d}. This allows one to control $\mu_t(F)$ for possible solutions $\mu_t$, see \eqref{0i}, \eqref{0j}, and thereby extend the domain of $L$ to a special class of unbounded functions, see Lemma \ref{0lm}. By means of this extension, we then prove, see Lemma \ref{01lm}, that each solution lies in $\mathcal{P}_{\rm exp}$, which is a key point of the whole story. 
 \item In Lemma \ref{Y1lm}, we prove inclusion $\mathcal{F}\subset \mathcal{F}_{\rm max}$, see claim (c) of Theorem \ref{1tm}. By \eqref{X12} this result allows us to pass on to the correlation functions and thus to prove (see Section 5) that the evolution $t \mapsto \mu_t\in \mathcal{P}_{\rm exp}$ constructed in \cite{KK} solves the Fokker-Planck equation for any $F\in \mathcal{F}_{\rm max}$ whenever $\mu_0 \in  \mathcal{P}_{\rm exp}$. The proof of uniqueness is mainly based on the fact proved in Lemma \ref{01lm}.   
\end{itemize}

\subsubsection{Concerning Theorem \ref{2tm}}

In constructing the Markov process that describes the stochastic evolution of the model corresponding to $L$ we mostly follow the way elaborated in our works \cite{KR,KR1}. It is based on solving the restricted martingale problem, which in an intrinsic way is related to the Fokker-Planck equation. To see this, it suffices to compare \eqref{FPE} and \eqref{Cad1}. In particular, uniqueness in this case is a consequence of the same property as proved in Theorem \ref{1tm}. Note that, similarly as in \cite{Ether}, the very construction of the process corresponding to \eqref{L} could be done more conventionally with the help of the dominance expressed in \eqref{Y0}. Our scheme, based on the Fokker-Planck equation, provides the means to obtain the uniqueness that is lacking in \cite{Ether}. 

The proof of Theorem \ref{2tm} consists of the following steps:
\begin{itemize}
 \item We modify the model described by $L$ given in \eqref{L} by multiplying the model parameters by a certain function $\psi_\sigma$, see \eqref{z2}, \eqref{z3}. As a result, we obtain the family of generators $\{L_\sigma: \sigma \in (0,1]\}$ corresponding to the so-called `auxiliary models'. For these models, all the results of \cite{KK} and the first part of this work are valid. At the same time, the modification mentioned allows one to construct Markov transition functions $p^\sigma_t$, see subsect. 6.2, which is impossible for the initial model. By standard methods, one then gets, see  
\eqref{EKP}, the finite-dimensional laws of the Markov processes corresponding to the auxiliary models.  

\item As mentioned above, by means of the methods used in the first part, we construct the evolution $t \mapsto \mu^\sigma_t\in \mathcal{P}_{\rm exp}$. At the same time, for $\sigma \in (0,1]$, the transition functions $p^\sigma_t$ define the evolution $t\mapsto \hat{\mu}^\sigma_t$ by the formula $\hat{\mu}^\sigma_t (\cdot ) = \int  p^\sigma_t (\gamma, \cdot) \mu_0(d\gamma)$, cf. \eqref{CP1}. In Lemma \ref{CPlm}, we show that $\hat{\mu}^\sigma_t= {\mu}^\sigma_t$, which is one of the crucial points of our construction. 
Then in Lemma \ref{z2lm}, we prove that ${\mu}^\sigma_t \Rightarrow {\mu}_t$ as $\sigma \to 0$, where the latter is the evolution constructed in Theorem \ref{1tm}. This fact and the Chentsov-like estimates obtained in Lemma \ref{u1lm} yield the following: (a) the Markov processes corresponding to $L_\sigma$, $\sigma \in (0,1]$, have cadlag paths; (b) these processes weakly converge as $\sigma \to 0$ to a unique process with cadlag paths, which is the process in question.  
\end{itemize}

\subsubsection{Connection to the broader literature}

As mentioned above, an individual-based model of a large population with a logistic-type interaction term was first introduced in \cite{BP}, see also \cite{BP1,Law,MDL}. In \cite{Ether,Four}, populations modeled in this way were called \emph{locally regulated}. 
The principal results of these two works were based on the construction of Markov processes for locally regulated finite populations. In \cite{Ether}, such a process was also constructed for an infinite population if the initial state is a tempered configuration. However, studying the uniqueness issues was beyond the capabilities of the method used in that work.

As it was aptly noted in \cite{Ether}, the density-dependent (i.e., logistic) regulation term in the Bolker-Pacala model was introduced to overcome the clumping in the population. However, it was not specified what such clumping would consist of and how it could be detected mathematically. In \cite{KK2}, it was suggested to connect clustering (i.e., clumping) with the appearance of  `heavy tails' of the distributions $p_{\Lambda,\mu}(n) = \mu(\Gamma_\Lambda^{(n)})$, $\Lambda \subset X$, see \eqref{Lambd}, which, of course, assumes their study. By virtue of Definition \ref{X3df}, sub-Poissonian measures do not have heavy tails, and the fact that the evolution preserves the set $\mathcal{P}_{\rm exp}$ was proposed to be considered as the lack of clustering in the corresponding population. In \cite{KK1} (see also \cite{Dima} for a preliminary version), the (regulated) evolution $t \mapsto \mu_t \in \mathcal{P}_{\rm exp}$ was constructed for the Bolker-Pacala model. Similarly to \cite{KK}, the evolution $t\mapsto k_t$ was shown to take place in the scale of Banach spaces $\{\mathcal{K}_\alpha\}_{\alpha \in \mathds{R}}$; see \eqref{Y2}. Next, it was proved that each $k_t$ is the correlation function of a unique state $\mu_t \in \mathcal{P}_{\rm exp}$. The main steps of this construction for the model considered here are presented in Sect. 5 below. The key aspect of the method of \cite{KK1} was demonstrating a kind of suppression of the dispersal kernel $a^{+}$ by the competition kernel $a$. Note that such an effect is impossible in the contact model, cf. \cite{K}, where the death part of the generator contains only an intrinsic mortality term. We believe that a modification of the technique developed in this work, combined with that of \cite{KK1,K}, will allow us to prove an analog of the present Theorem \ref{2tm} also for the Bolker-Pacala model.

\section{Properties of Possible Solutions of the Fokker-Planck Equation}

In this section, we obtain a number of a priori properties of the solutions of \eqref{FPE}, which will allow us to locate these solutions and thus to elaborate tools for proving Theorem \ref{1tm}.

\subsection{A property of the domain}

As mentioned above, the main technical aspect of our approach consists in dealing with correlation functions of the states in question rather than with the states themselves. In view of this, we crucially use their relationship expressed in \eqref{X12}, which is based on the possibility to present the elements of $\mathcal{F}$ in the form $F=KG$; see \eqref{X4}. Recall that the $K$-map  can be extended to $\mathcal{G}_\alpha$, see Remark
\ref{Y1rk}. Below by $K$ we understand this extension.
\begin{lemma}
  \label{Y1lm}
For every $\vartheta \subset \Theta$, see \eqref{X18a}, and $\tau\in (0,1/2]$, there  exists a unique
$G^\vartheta_{\tau}$, which belongs  to $\mathcal{G}_\alpha$ with an arbitrary $\alpha\in \mathds{R}$, such that
$\varPsi^\vartheta_{\tau} = K
G^\vartheta_{\tau}$. In other words, $\mathcal{F}\subset \mathcal{F}_{\rm max}$.
\end{lemma}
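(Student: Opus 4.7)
The plan is to construct $G^\vartheta_\tau$ explicitly via Möbius inversion of the $K$-map, verify the $\mathcal{G}_\alpha$-estimates for every $\alpha \in \mathds{R}$, and deduce uniqueness from injectivity of the extended $K$-map on each $\mathcal{G}_\alpha$. For the singleton case $\vartheta=\{\theta\}$, I set $f(s):=s/(1+\tau s)$ so that $\varPhi^\theta_\tau = f\circ \varPhi^\theta$. Inverting $K$ by Möbius inversion on finite configurations and using the standard iterated finite-difference identity yields, for $n\geq 1$,
\[
G^\theta_\tau(\{x_1,\ldots,x_n\}) = \int_0^{\theta(x_1)}\!\!\!\cdots\!\int_0^{\theta(x_n)} f^{(n)}(u_1+\cdots+u_n)\,du_1\cdots du_n,
\]
together with $G^\theta_\tau(\varnothing)=0$. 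Since $f^{(n)}(s)=(-1)^{n-1}n!\tau^{n-1}(1+\tau s)^{-(n+1)}$ has constant sign, the substitution $v_i=\tau u_i$, followed by extending the integration domain to $(\mathds{R}_+)^n$ and applying the Dirichlet--Beta identity $\int_{(\mathds{R}_+)^n}(1+\sum_i v_i)^{-(n+1)}dv=1/n!$, gives the clean bound $|G^\theta_\tau(\eta)|\leq 1/\tau$ for all $|\eta|\geq 1$. Combined with the crucial fact that $G^\theta_\tau$ vanishes outside $(\operatorname{supp}\theta)^n$, a set of finite Lebesgue measure $V_\theta^n$, this produces $\|(G^\theta_\tau)^{(n)}\|_{L^1}\leq V_\theta^n/\tau$, whence $|G^\theta_\tau|_\alpha\leq \tau^{-1}\exp(e^\alpha V_\theta)<\infty$ for every $\alpha$. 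The equality $KG^\theta_\tau=\varPhi^\theta_\tau$ holds pointwise on $\Gamma_{\rm fin}$ by construction and extends pointwise to all of $\Gamma$, since both sides depend only on $\gamma\cap\operatorname{supp}\theta$, which is always a finite configuration.

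For general $\vartheta=\{\theta_1,\ldots,\theta_k\}$, iterating the product identity $(KG_1)(KG_2)=K(G_1\star G_2)$ from \eqref{X31a} gives $\varPsi^\vartheta_\tau = K\bigl(G^{\theta_1}_\tau\star\cdots\star G^{\theta_k}_\tau\bigr)$, and I set $G^\vartheta_\tau$ equal to this $k$-fold $\star$-product. Straightforward bookkeeping of the $\star$-definition shows $G^\vartheta_\tau$ is supported on finite subsets of the compact set $\Lambda:=\bigcup_i\operatorname{supp}\theta_i$ of finite volume $V_\Lambda$, and admits a pointwise bound $|G^\vartheta_\tau(\eta)|\leq C_k^{|\eta|}/\tau^k$ for a combinatorial constant $C_k$ (each element of $\eta$ has one of three placements in the $\star$-sum at each of the $k$ iterations); these yield $|G^\vartheta_\tau|_\alpha\leq \tau^{-k}\exp(C_k V_\Lambda e^\alpha)<\infty$ for every $\alpha$. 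Uniqueness in $\mathcal{G}_\alpha$ follows from injectivity of the extended $K$-map: if $H\in\mathcal{G}_\alpha$ satisfies $KH=0$ $\pi_{e^\alpha}$-a.e., then testing against Poisson measures $\pi_\kappa$ with densities $\rho$ of $L^\infty$-norm at most $e^\alpha$ and using the extensions of \eqref{X5}--\eqref{X12} yields $\int H(\eta)e(\eta;\rho)\lambda(d\eta)=0$; varying $\rho$ (e.g., over nonnegative simple functions bounded by $e^\alpha$) separates the symmetric kernels $H^{(n)}$ and forces $H=0$ $\lambda$-a.e.

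The hard part will be the $\mathcal{G}_\alpha$-estimate for $G^\theta_\tau$. The naive bound $|f^{(n)}(s)|\leq n!\tau^{n-1}$ produces $|G^\theta_\tau(\eta)|\leq n!\tau^{n-1}\prod_i \theta(x_i)$, whose factorial is not absorbed by the $1/n!$ in $|\cdot|_\alpha$, so the resulting series $\tau^{-1}\sum_n (e^\alpha\tau\langle\theta\rangle)^n$ diverges for large $\alpha$. Evaluating the full Dirichlet--Beta integral to obtain the $\theta$-independent bound $|G^\theta_\tau(\eta)|\leq 1/\tau$, and then using compact support of $\theta$ to replace the $\prod\theta(x_i)$ factor essentially by the volume $V_\theta^n$ (which $1/n!$ readily absorbs), is the delicate step. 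Extending this control from a single factor to the $k$-fold $\star$-product is then routine combinatorics.
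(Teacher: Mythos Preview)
Your approach is correct but follows a genuinely different route from the paper's proof. The paper starts from the Laplace-type integral representation \eqref{X16} of $\varPhi^\theta_\tau$, expands the product $\varPsi^\vartheta$ via set partitions $\mathfrak{d}_l$ into terms $KG^{\widehat\vartheta}$, writes the exponential factor as $KH^\vartheta_\beta$ with $H^\vartheta_\beta(\eta)=\prod_x(e^{-\tau\tilde\theta_\beta(x)}-1)$, and then takes the $\star$-product $G^{\widehat\vartheta}\star H^\vartheta_\beta$. The key $\mathcal{G}_\alpha$-estimate is obtained through $1-e^{-u}\leq\sqrt{2u}$, the Minlos-type identity \eqref{XL}, and Touchard polynomials; crucially, the final bound depends only on $\langle\psi\rangle$ and $|\vartheta|$, hence is uniform over $\vartheta\subset\Theta$. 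Your argument instead computes the M\"obius inverse directly via the iterated finite-difference formula $G^\theta_\tau(\{x_1,\dots,x_n\})=\int_0^{\theta(x_1)}\!\cdots\!\int_0^{\theta(x_n)}f^{(n)}$, and then the Dirichlet integral $\int_{(\mathds{R}_+)^n}(1+\sum v_i)^{-(n+1)}dv=1/n!$ kills the dangerous $n!$ to give the clean bound $|G^\theta_\tau|\leq 1/\tau$; finiteness of $|G^\theta_\tau|_\alpha$ then comes from the compact support of $\theta$ rather than from $\langle\theta\rangle$. The trade-off: your proof is shorter and more elementary, but your bound scales with the volume $V_\Lambda$ of $\bigcup_i\operatorname{supp}\theta_i$, which is not controlled by the constraints defining $\Theta$; the paper's bound is uniform in $\vartheta$ (a fact it explicitly flags). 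For the lemma as stated this makes no difference.

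One small caveat on uniqueness: your argument via testing against Poisson measures $\pi_\kappa$ with $\|\rho\|_\infty\leq e^\alpha$ tacitly uses $\pi_\kappa\ll\pi_{e^\alpha}$, which fails on $X=\mathds{R}^d$ for general such $\rho$ (homogeneous Poisson measures of different intensities are mutually singular). The fix is immediate, though: since $G^\vartheta_\tau$ vanishes on configurations touching $X\setminus\Lambda$, the sum $KG^\vartheta_\tau(\gamma)$ is a finite sum over subsets of $\gamma\cap\Lambda\in\Gamma_{\rm fin}$, so $\varPsi^\vartheta_\tau=KG^\vartheta_\tau$ holds pointwise on $\Gamma$, and M\"obius inversion on $\Gamma_{\rm fin}$ gives uniqueness directly without passing through $L^1(\pi_{e^\alpha})$.
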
  
\begin{proof}
Fix some $\vartheta \subset \Theta$, and write $\vartheta=\{ \theta_1,
\dots , \theta_n\}$, $n\in \mathds{N}$. By (\ref{X16}) and
(\ref{M15}) we have
\begin{gather}
  \label{X25}
\varPsi^\vartheta_{\tau} (\gamma)  =  \int_0^{+\infty}
\cdots \int_0^{+\infty} e^{-(\beta_1 + \cdots +\beta_n)} \varPsi^\vartheta
(\gamma) \exp\left( - \tau\sum_{x\in \gamma} \sum_{j=1}^n \beta_j
\theta_j(x)\right) d \beta_1 \cdots d \beta_n,
\end{gather}
where
\begin{gather}
\label{X25a} \varPsi^\vartheta (\gamma) = \prod_{\theta \in \vartheta}
\varPhi^\theta (\gamma) = \left(\sum_{x\in \gamma} \theta_1(x) \right)
\cdots \left(\sum_{x\in \gamma} \theta_n(x) \right).
\end{gather}
For a positive integer $l\leq n$, let $d$ be a division of the set $\{1, \dots , n\}$ into $l$
unlabeled subsets $\delta \neq \varnothing$.   
For convenience, we label them in such a way that 
$|\delta_j| \geq |\delta_k|$ for $j<k$. Let also
$\mathfrak{d}_l$ be the collection of all such divisions. Its
cardinality is $S(n,l)$ -- Stirling's number of the second kind. For
$d\in \mathfrak{d}_l$, we set
\begin{equation}
  \label{X26}
  \widehat{\theta}_k (x) = \prod_{i\in \delta_k} \theta_i (x),
  \qquad k = 1 , \dots , l.
\end{equation}
Then by (\ref{X7}), (\ref{X6}) one gets the following relation, inverse to that in \eqref{X2},
\begin{eqnarray}
  \label{X27}
\varPsi^\vartheta (\gamma) & = & \sum_{l=1}^n  \sum_{d\in \mathfrak{d}_l}
\left( \sum_{x_1\in \gamma} \widehat{\theta}_1 (x_1) \sum_{x_2\in
\gamma\setminus x_1} \widehat{\theta}_2 (x_2) \cdots \sum_{x_l\in
\gamma\setminus \{x_1, \dots , x_{l-1}\}}
\widehat{\theta}_l (x_l) \right) \\[.2cm] \nonumber & = & \sum_{l=1}^n l!  \sum_{d\in
\mathfrak{d}_l} (K G^{\widehat{\vartheta}})(\gamma), \qquad
\widehat{\vartheta} := \{ \widehat{\theta}_1, \dots ,
\widehat{\theta}_l\},
\end{eqnarray}
where $G^{\widehat{\vartheta}}$ is as in (\ref{X7}). 
Note that each
$\widehat{\theta}_k$ is in $\Theta$ for $\psi(x)\leq 1$, see \eqref{psi}. Now we set,  cf. \eqref{G2},
\begin{gather}
  \label{X28}
H_\beta^\vartheta (\eta) = \prod_{x\in \eta} \left(e^{-\tau
\tilde{\theta}_\beta(x)} -1 \right), \qquad \tilde{\theta}_\beta (x)
= \sum_{j=1}^n \beta_j \theta_j (x), \qquad \eta \in \Gamma_{\rm
fin}.
\end{gather}
Then
\begin{eqnarray*}
 & & \exp\left( - \tau\sum_{x\in \gamma}
\sum_{j=1}^n \beta_j \theta_j(x)\right)  =  \exp\left( - \tau\sum_{x\in \gamma}
  \tilde{\theta}_\beta(x)\right) \\[.2cm] & & =  \prod_{x\in \gamma} \left(1 + [e^{-\tau \tilde{\theta}_\beta(x)} -1]\right) 
 = \sum_{\eta \Subset \gamma} H^\vartheta_\beta(\eta) =(K
H^\vartheta_\beta)(\gamma),
\end{eqnarray*}
and further, see (\ref{X25}),
\begin{gather}
  \label{X30}
  \varPsi^\vartheta_{\tau} (\gamma) = \sum_{l=1}^n
  \sum_{d\in \mathfrak{d}_l} \int_0^{+\infty} \cdots \int_0^{+\infty} e^{-(\beta_1 + \cdots + \beta_n)} l!
 (K G^{\widehat{\vartheta}})(\gamma) (K
H^\vartheta_\beta)(\gamma) d\beta_1 \cdots d \beta_n.
\end{gather}
It is known, see \cite[Definition 3.2]{Kuna}, that
\begin{gather}
  \label{X31}
(K G^{\widehat{\vartheta}})(\gamma) (K H^\vartheta_\beta)(\gamma) =
(K (G^{\widehat{\vartheta}} \star H^\vartheta_\beta))
(\gamma),\\[.2cm] \nonumber( G^{\widehat{\vartheta}} \star
H^\vartheta_\beta) (\eta) = \sum_{\xi_1 \subset \eta} \sum_{\xi_2
\subset \eta\setminus \xi_1}G^{\widehat{\vartheta} } (\xi_1 \cup
\xi_2) H^\vartheta_\beta (\eta\setminus \xi_2).
\end{gather}
Note that
the sums in the right-hand side of (\ref{X31}) are finite since $\eta$ is a finite multiset.  This allows one to interchange $K$ and the integration in \eqref{X30}.  
Then
 \[
\varPsi^\vartheta_{\tau} = K  G^\vartheta_{\tau},
 \]
with
\begin{gather}
  \label{X30a}
G^\vartheta_{\tau} (\eta)= \sum_{l=1}^n
  \sum_{d\in \mathfrak{d}_l}\int_0^{+\infty} \cdots \int_0^{+\infty} e^{-(\beta_1 + \cdots + \beta_n)} l!(G^{\widehat{\vartheta}} \star
H^\vartheta_\beta) (\eta) d \beta_1 \cdots d \beta_n.
\end{gather}
To prove the lemma, we have to estimate the norms of $G^\vartheta_{\tau}$, see \eqref{Y5} and \eqref{Y6}. To interchange the $\beta$-integration with that over $\Gamma_{\rm fin}$ we employ the Tonelli-Fubini theorems \cite[Theorems 4.4 and 4.5, page 91]{Brezis}. 
By the evident inequality $1-e^{-u} \leq \sqrt{2u}$, $u\geq 0$, we have, cf. \eqref{X28},
\begin{equation}
  \label{Xx}
|H^\vartheta_\beta(\eta)| \leq \prod_{x\in \eta} (1- e^{-\tau
\tilde{\theta}_\beta (x)}) \leq (2\tau)^{|\eta|/2} e(\eta;
{\theta}_\beta) \leq e(\eta;
{\theta}_\beta),  \quad   {\theta}_\beta (x) :=  \sqrt{ \tilde{\theta}_\beta (x)}, 
\end{equation}
see (\ref{Y0a}). According to (\ref{Y5}) and by means of (\ref{XL}),
(\ref{Xx}) we then get
\begin{gather}
  \label{X32}
|G^{\widehat{\vartheta}} \star H^\vartheta_\beta|_\alpha \leq
\int_{\Gamma_{\rm fin}} e^{\alpha |\eta|} \left( \sum_{\xi_1 \subset
\eta} \sum_{\xi_2 \subset \eta\setminus \xi_1}G^{\widehat{\vartheta} } (\xi_1 \cup \xi_2) e(\eta\setminus
\xi_2; {\theta}_\beta) \right) \lambda ( d \eta)\\[.2cm]
= \int_{\Gamma^2_{\rm fin}} e^{\alpha |\eta| + \alpha|\xi_1|} \left(
\sum_{\xi_2 \subset \eta} G^{\widehat{\vartheta} } (\xi_1 \cup \xi_2)e(\eta\setminus
\xi_2\cup \xi_1; {\theta}_\beta) \right)\lambda ( d\eta)
\lambda (d \xi_1) \nonumber \\[.2cm] \nonumber =
\int_{\Gamma^3_{\rm fin}} e^{\alpha |\eta| + \alpha|\xi_1|+
\alpha|\xi_2|}  G^{\widehat{\vartheta} }
(\xi_1 \cup \xi_2) e(\xi_1;{\theta}_\beta)
e(\eta;{\theta}_\beta) \lambda ( d \eta) \lambda ( d \xi_1)
\lambda ( d \xi_2) \\[.2cm] \nonumber = \exp\left(e^\alpha \int_X {\theta}_\beta (x) d x
\right) \int_{\Gamma^2_{\rm fin}}  e^{\alpha |\xi_1| +
\alpha|\xi_2|} G^{\widehat{\vartheta} } (\xi_1 \cup \xi_2)
e(\xi_1;{\theta}_\beta) \lambda ( d \xi_1) \lambda ( d \xi_2).
\end{gather}
Here we have taken into account that
\[
e(\xi_1 \cup \xi_2; {\theta}_\beta) = e(\xi_1;
{\theta}_\beta) e(\xi_2; {\theta}_\beta),
\]
see (\ref{Y0a}), and also (\ref{X14}). By (\ref{X7}) and the latter,
we have
\begin{eqnarray}
  \label{X33}
\Upsilon_l & := & l!\int_{\Gamma^2_{\rm fin}}  e^{\alpha |\xi_1| +
\alpha|\xi_2|} G^{\widehat{\vartheta} } (\xi_1 \cup \xi_2)
e(\xi_1;{\theta}_\beta) \lambda ( d \xi_1) \lambda ( d
\xi_2)\\[.2cm] \nonumber
& =  & l! \int_{\Gamma_{\rm fin}} e^{\alpha |\eta|}
G^{\widehat{\vartheta} } (\eta)  \left( \sum_{\xi\subset \eta}
 e(\xi; {\theta}_\beta)\right) \lambda ( d \eta).
\end{eqnarray}
Now we recall that all 
$\theta_i \in \vartheta$ are in $\Theta$, see (\ref{X18a}); hence, $\theta_i (x) \leq 1$.  By
(\ref{X28}) and (\ref{Xx}) we then have 
\begin{equation}
 \label{X33a}
 0\leq {\theta}_\beta (x)\leq \omega_\beta : = \sqrt{\beta_1+\cdots + \beta_n},  
\end{equation}
holding for all $x\in X$.
Hence, $e(\xi; {\theta}_\beta)\leq
\omega_\beta^{|\xi|}$, see (\ref{Y0a}). By means of this estimate and
(\ref{X7}), (\ref{X26}) we get in (\ref{X33}) the following one
\begin{eqnarray}
  \label{X34}
\Upsilon_l & \leq & l!  \int_{\Gamma_{\rm fin}}\left[ e^\alpha (1+ \omega_\beta)\right]^{|\eta|} G^{\widehat{\vartheta} }(\eta) \lambda ( d
\eta) \\[.2cm] \nonumber & = & \left[ e^\alpha (1+ \omega_\beta)\right]^l \prod_{k=1}^l
\langle \widehat{\theta}_k \rangle \leq \left[ e^\alpha \langle {\psi} \rangle (1+ \omega_\beta)\right]^l,
\end{eqnarray}
see (\ref{X8}) and \eqref{X18a}. Now we employ (\ref{X34}) and (\ref{X33a}) in (\ref{X32}) and finally
get
\begin{equation*}
l!|G^{\widehat{\vartheta}} \star H^\vartheta_\beta|_\alpha \leq
\exp\left( e^\alpha  \langle \psi \rangle \omega_\beta\right)
\left[e^\alpha \langle \psi \rangle  (1+ \omega_\beta )\right]^l,
\end{equation*}
see also  \eqref{psi1}.
This yields in \eqref{X30a} the following
\begin{eqnarray}
\label{X36}
 |G^\vartheta_\tau|_\alpha & \leq & \int_0^{+\infty}\cdots \int_0^{+\infty}  \exp\left( - \beta_1 \cdots - \beta_n +  e^\alpha  \langle \psi \rangle \omega_\beta\right)\\[.2cm]\nonumber & \times & \sum_{l=1}^n S(n,l) \left[e^\alpha \langle \psi \rangle  (1+ \omega_\beta )\right]^l d\beta_1 \cdots d\beta_n \\[.2cm] \nonumber & = & \int_0^{+\infty}\cdots \int_0^{+\infty}  \exp\left( - \beta_1 \cdots - \beta_n \right) W_\alpha (\beta_1, \dots , \beta_n)   d\beta_1 \cdots d\beta_n
\end{eqnarray}
where
\[
W_\alpha (\beta_1, \dots , \beta_n) :=  \exp\left( e^\alpha  \langle \psi \rangle \omega_\beta\right) T_n\left(e^\alpha \langle \psi \rangle  (1+ \omega_\beta )\right),
\]
Here $T_n(u) = \sum_{l=0}^n S(n,l) u^l$ is Touchard's polynomial, $\deg T_n = n$, and $S(n,l)$ is Stirling's number of the second kind.
Thus, the integral in the last line of \eqref{X36} 
is obviously convergent for all $\alpha \in \mathds{R}$, see \eqref{X33a}. This completes the proof.   
\end{proof}

\subsection{Useful estimates and their consequences}

Here, we derive some estimates by means of which we then obtain important properties of possible solutions of the Fokker-Planck equation with the domain $\mathcal{F}$ defined in (\ref{X18}), cf. Definition \ref{1df}.  As a result, we extend the domain of $L$ by adding some unbounded functions, which, in particular, will allow us to prove that each solution lies in $\mathcal{P}_{\rm exp}$. 

For $F\in \mathcal{F}$, write
\begin{equation}
 \label{X19}
 \nabla_x F(\gamma) = F(\gamma\cup x) - F(\gamma), \qquad x\in X.
\end{equation}
Now we take $F= \varPhi^\theta_\tau$, see \eqref{X16}, and get
\begin{equation*}
\nabla_x  \varPhi^\theta_\tau (\gamma) = \frac{\theta(x)}{[1+\tau \varPhi^\theta_\tau (\gamma\cup x) ][1+\tau \varPhi^\theta_\tau (\gamma)]},
\end{equation*}
which immediately yields
\begin{equation}
 \label{0b}
 0 \leq \nabla_x  \varPhi^\theta_\tau (\gamma)  \leq \theta (x), \qquad \frac{\partial}{\partial \tau} \nabla_x  \varPhi^\theta_\tau (\gamma) \leq 0,  
\end{equation}
Next, see \eqref{M15},
\begin{gather}
 \label{0c}
 0 \leq \nabla_x  \varPsi^\vartheta_\tau (\gamma) = \prod_{\theta\in \vartheta} \left[\nabla_x  \varPhi^\theta_\tau (\gamma) + \varPhi^\theta_\tau (\gamma)\right] - \prod_{\theta\in \vartheta}\varPhi^\theta_\tau (\gamma) \\[.2cm] \nonumber \leq \sum_{\varnothing \neq \vartheta'\subset \vartheta} \left(\prod_{\theta \in \vartheta'} \theta(x) \right)  \varPsi^{\vartheta\setminus \vartheta'}_\tau (\gamma).
\end{gather}
By \eqref{L}, we then conclude that
\begin{equation}
 \label{0d}
 \pm L^{\pm} \varPsi^\vartheta_\tau (\gamma) \geq 0,
\end{equation}
and also
\begin{gather}
 \label{0da}
 L^{+} \varPsi^\vartheta_\tau (\gamma) \leq \sum_{\varnothing \neq \vartheta'\subset \vartheta} \left(\int_{X} b(x)\prod_{\theta \in \vartheta'} \theta(x) dx \right)  \varPsi^{\vartheta\setminus \vartheta'}_\tau (\gamma)\leq \|b\|\langle \psi \rangle \sum_{\varnothing \neq \vartheta'\subset \vartheta}\varPsi^{\vartheta\setminus \vartheta'}_\tau (\gamma),
\end{gather}
where $\langle \psi \rangle = \int \psi(x) d x$, see \eqref{psi1} and \eqref{X18a}.
By \eqref{0d} it follows that
\begin{equation}
 \label{0e}
 \mu_t (\varPsi^\vartheta_\tau) \leq \mu_0 (\varPsi^\vartheta_\tau) + \int_0^t \mu_s (L^{+}\varPsi^\vartheta_\tau) ds,
\end{equation}
which should be true for any solution $\mu_t$. By \eqref{X25}, we have
\begin{equation}
 \label{0ea}
 \varPsi^\vartheta_\tau (\gamma) \leq \varPsi^\vartheta (\gamma) , \qquad \lim_{\tau \to 0}\varPsi^\vartheta_\tau (\gamma) = \varPsi^\vartheta (\gamma).
\end{equation}
For $\mu_0 \in \mathcal{P}_{\rm exp}$, similarly as in \eqref{X34}, by means of \eqref{0ea}, \eqref{X27} and \eqref{X8} we obtain
\begin{gather}
 \label{0f}
 \mu_0(\varPsi^\vartheta_\tau) \leq \mu_0(\varPsi^\vartheta) = \sum_{l=1}^{|\vartheta|}\sum_{d\in \mathfrak{d}_l} \int_{X^l} k_{\mu_0}^{(l)} (x_1 , \dots , x_l) \widehat{\theta}_1(x_1)\cdots \widehat{\theta}_l(x_l) d x_1 \cdots d x_l \\[.2cm] \nonumber \leq \sum_{l=1}^{|\vartheta|}\sum_{d\in \mathfrak{d}_l} \varkappa_{\mu_0}^l \langle \widehat{\theta}_1\rangle\cdots \langle \widehat{\theta}_l \rangle  \leq \sum_{l=1}^{|\vartheta|} S(|\vartheta|, l) \left[ \varkappa_{\mu_0} \langle \psi \rangle \right]^l = T_{|\vartheta|} (\varkappa_{\mu_0}\langle \psi \rangle),
\end{gather}
where  $\varkappa_{\mu_{0}}$ is the type of $\mu_0$, see Definition \ref{X3df}. Recall that $T_n$ stands for Touchard's polynomial.   
By \eqref{0b}, \eqref{0e} and \eqref{0f} we then get
\begin{equation*}
 \mu_t (\varPhi^\theta_\tau) \leq T_{1} (\varkappa_{\mu_0}\langle\psi \rangle) + \|b\| \langle\psi \rangle t, \qquad \theta \in \Theta,
\end{equation*}
see \eqref{6} and \eqref{X18a}. Now by \eqref{0c} this can be generalized to the following recursion 
\begin{equation}
 \label{0h}
 \mu_t (\varPsi^\vartheta_\tau) \leq T_{|\vartheta|} (\varkappa_{\mu_0} \langle\psi \rangle) + 
 \int_0^t \left( \sum_{\varnothing \neq \vartheta' \subset \vartheta}\mu_s (\varPsi^{\vartheta\setminus \vartheta'}_\tau) \right) d s,
\end{equation}
by which, we obtain
\begin{equation}
 \label{0i}
 \mu_t (\varPsi^\vartheta_\tau) \leq Q_{|\vartheta|} (t) := \sum_{l=0}^{|\vartheta|} \frac{1}{l!} T_{|\vartheta|-l}(\varkappa_{\mu_0}\langle\psi \rangle) \left[\|b\| \langle\psi \rangle\right]^l t^l.
\end{equation}
By this estimate and \eqref{0da}, we also get
\begin{equation}
 \label{0j}
 \mu_t (L^{+}\varPsi^\vartheta_\tau) \leq \|b\| \langle \psi \rangle \sum_{k=1}^{|\vartheta|}{|\vartheta| \choose k} 
 Q_{k} (t)  =:  Q^{+}_{|\vartheta|} (t).
\end{equation}
In particular, this implies that  $\mu_t (L^{+}\varPsi^\vartheta_\tau)$ is a polynomial in $t$, which implies in turn the local integrability of the map $t \mapsto \mu_t (L^{+}\varPsi^\vartheta_\tau)$. Then the $\mu_t (d\gamma)dt$-integrability of  $|L^{-}\varPsi^\vartheta_\tau (\gamma)|$ follows by the triangle inequality from the assumed corresponding integrability of $|L\varPsi^\vartheta_\tau (\gamma)|$, see Definition \ref{1df}, cf. \eqref{0ja}, and \eqref{0j}. 

Now we can prove the following important statement. 
\begin{lemma}
 \label{0lm}
Let $\mu_t$ be a solution of the Fokker-Planck equation for $(L, \mathcal{F}, \mu_0)$ with some $\mu_0\in \mathcal{P}_{\rm exp}$. Then $\mu_t$ solves \eqref{FPE} also with  $F= \varPsi^\vartheta$,
see \eqref{X25a}.  
\end{lemma}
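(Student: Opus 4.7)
My plan is to apply the Fokker--Planck equation for $F=\varPsi^\vartheta_\tau\in\mathcal{F}$ and pass to the limit $\tau\downarrow 0$. The sign splitting \eqref{0d} isolates four non-negative terms in the rearranged equation, three of which are controlled uniformly in $\tau$ by \eqref{0f}, \eqref{0i} and \eqref{0j}; monotonicity in $\tau$ then converts pointwise limits into monotone convergence of the integrals.

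The pointwise monotonicities I need are
$$\varPsi^\vartheta_\tau \nearrow \varPsi^\vartheta,\qquad L^{+}\varPsi^\vartheta_\tau \nearrow L^{+}\varPsi^\vartheta,\qquad -L^{-}\varPsi^\vartheta_\tau \nearrow -L^{-}\varPsi^\vartheta \qquad (\tau\downarrow 0).$$
The first follows from $\varPhi^\theta_\tau=\varPhi^\theta/(1+\tau\varPhi^\theta)$ being decreasing in $\tau$. For the other two, the second inequality in \eqref{0b} gives $\nabla_x\varPhi^\theta_\tau \nearrow \theta(x)$, and together with the product expansion
$$\nabla_x\varPsi^\vartheta_\tau(\gamma)=\sum_{\varnothing\neq\vartheta'\subset\vartheta}\Big(\prod_{\theta\in\vartheta'}\nabla_x\varPhi^\theta_\tau(\gamma)\Big)\Big(\prod_{\theta\in\vartheta\setminus\vartheta'}\varPhi^\theta_\tau(\gamma)\Big),$$
each summand is a product of non-negative increasing quantities, hence $\nabla_x\varPsi^\vartheta_\tau \nearrow \nabla_x\varPsi^\vartheta$; evaluating at $\gamma\setminus x$ yields $\varPsi^\vartheta_\tau(\gamma)-\varPsi^\vartheta_\tau(\gamma\setminus x) \nearrow \varPsi^\vartheta(\gamma)-\varPsi^\vartheta(\gamma\setminus x)$. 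The convergences $L^{\pm}\varPsi^\vartheta_\tau \to L^{\pm}\varPsi^\vartheta$ then follow from \eqref{L} by monotone convergence in the $b(x)\,dx$ and $(m(x)+\int a(x-y)(\gamma\setminus x)(dy))\gamma(dx)$ integrals.

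I then rearrange the FPE for $\varPsi^\vartheta_\tau$ into the manifestly non-negative form
$$\mu_t(\varPsi^\vartheta_\tau)+\int_0^t\mu_s(-L^{-}\varPsi^\vartheta_\tau)\,ds = \mu_0(\varPsi^\vartheta_\tau)+\int_0^t\mu_s(L^{+}\varPsi^\vartheta_\tau)\,ds,$$
and apply monotone convergence (with Tonelli on $s$) to each of the four terms. The right-hand side has a finite limit thanks to \eqref{0f} and \eqref{0j}, so the left-hand side does too; in particular $\int_0^t\mu_s(-L^{-}\varPsi^\vartheta)\,ds<\infty$. Via Fubini this gives absolute $\mu_s$-integrability of $L\varPsi^\vartheta$ for Lebesgue-a.e.\ $s$ together with local Lebesgue-integrability of $s\mapsto\mu_s(L\varPsi^\vartheta)$, which is item (i) of Definition \ref{1df}; passing to the limit in the displayed identity then delivers \eqref{FPE} for $F=\varPsi^\vartheta$.

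The one genuine obstacle is that $\varPsi^\vartheta$ is unbounded and no direct, $\gamma$-uniform estimate on $-L^{-}\varPsi^\vartheta$ is available, owing to the quadratic term in $L^{-}$. Accordingly, $\mu_s$-integrability of $L^{-}\varPsi^\vartheta$ is recovered only indirectly, from the FPE itself, after the other three non-negative terms have been controlled --- which is exactly what the sign-preserving cutoff family $\{\varPsi^\vartheta_\tau\}$ and \eqref{0d} were designed to enable.
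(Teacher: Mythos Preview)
Your argument is correct and follows essentially the same route as the paper: rearrange the Fokker--Planck identity for $\varPsi^\vartheta_\tau$ so that all four terms are non-negative, control three of them by \eqref{0f}, \eqref{0i}, \eqref{0j}, and pass to the limit $\tau\downarrow 0$ by monotone convergence. In fact you supply a little more detail than the paper does on why $\nabla_x\varPsi^\vartheta_\tau$ (and hence $\pm L^\pm\varPsi^\vartheta_\tau$) is monotone in $\tau$, via the explicit product expansion.
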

\begin{proof}
In view of \eqref{0ea}, to prove the lemma, one should show that, for all $t\geq 0$, the following holds 
\begin{gather}
 \label{0k}
\lim_{\tau\to 0} \mu_t (\varPsi^\vartheta_{\tau}) = \mu_t (\varPsi^\vartheta), \qquad  \lim_{\tau\to 0} \int_0^t \mu_s (L^{\pm}\varPsi^\vartheta_{\tau}) ds  = \int_0^t \mu_s (L^{\pm} \varPsi^\vartheta) ds.  
\end{gather}
By \eqref{X16}, \eqref{X15} and \eqref{0b} the maps $\tau \mapsto  \mu_t (\varPsi^\vartheta_{\tau})$ and $\tau \mapsto \int_0^t \mu_s (L^{\pm}\varPsi^\vartheta_{\tau}) ds$ are monotone. Then the convergence as in \eqref{0k} follows by the monotone convergence    
theorem, the bounds in \eqref{0h}, \eqref{0j} and the following one, see also \eqref{0d},
\begin{gather*}
 - \int_0^t \mu_s (L^{-}\varPsi^\vartheta_{\tau}) ds = \int_0^t \mu_s (|L^{-}\varPsi^\vartheta_{\tau}|)ds = \mu_0 (\varPsi^\vartheta_{\tau}) - \mu_t (\varPsi^\vartheta_{\tau}) +  \int_0^t \mu_s (L^{+}\varPsi^\vartheta_{\tau}) ds \\[.2cm] \nonumber \leq \mu_0 (\varPsi^\vartheta_{\tau}) +  \int_0^t \mu_s (L^{+}\varPsi^\vartheta_{\tau}) ds \leq \mu_0 (\varPsi^\vartheta) + \int_0^t Q^{+}_{|\vartheta|} (s) ds.
\end{gather*}
This completes the proof. 
\end{proof}

\subsection{Locating the solutions}

This subsection aims to prove that any solution of the Fokker-Planck equation for $(L, \mathcal{F}, \mu_0)$ with $\mu_0\in \mathcal{P}_{\rm exp}$ lies in $\mathcal{P}_{\rm exp}$, which can be achieved with the help of Lemma \ref{0lm}. 

For $\vartheta \subset \Theta$, $|\vartheta|=n$, define
\begin{equation}
 \label{0n}
 F^\vartheta (\gamma) = \sum_{x_1\in \gamma} \theta_1 (x_1) \sum_{x_2\in \gamma
\setminus x_1} \theta_2 (x_2) \cdots \sum_{x_n\in \gamma \setminus \{x_1, \dots, x_{n-1}\}} \theta_n (x_n).  
\end{equation}
By \eqref{X7} and \eqref{X6} it follows that $F^\vartheta = n! K G^\vartheta$, which by \eqref{X8} yields
\begin{equation}
 \label{0na}
 \mu (F^\vartheta ) = \int_{X^n} k^{(n)}_\mu (x_1 , \dots , x_n) \theta_1 (x_1) \cdots \theta_n (x) dx_1 \cdots d x_n \leq \varkappa_\mu^n \langle \theta_1 \rangle \cdots \langle \theta_n \rangle,
\end{equation}
holding for all $\mu \in \mathcal{P}_{\rm exp}$. At the same time, by \eqref{X2} we get
\begin{equation}
 \label{0p}
F^\vartheta = \sum_{\mathbb{G}\subset \mathbb{K}_n} (-1)^{l_{\mathbb{G}}} \varPsi^{\widehat{\vartheta}},  \qquad \widehat{\vartheta}= \{\widehat{\theta}_1 , \dots \widehat{\theta}_{n_{\mathbb{G}}}\},  
\end{equation}
where $\widehat{\theta}_j (x)= \prod_{i\in \mathbb{V}_j}\theta_i (x)$,  $\mathbb{V}_j$ being the $j$-th connected component of $\mathbb{G}$. 
\begin{remark}
 \label{01rk}
Each  $\widehat{\theta}_j$ that appears in \eqref{0p} is in $\Theta$, see \eqref{X18a}. Then $F^\vartheta$ is a linear combination of $\varPsi^{\widehat{\vartheta}}$ with $\widehat{\theta} \in \Theta$. Hence, 
by Lemma \ref{0lm}, each solution $\mu_t$ solves \eqref{FPE} also with $F=F^\vartheta$ for any   $\vartheta \subset \Theta$. 
\end{remark}
Then similarly as in \eqref{0e} we get
\begin{equation}
 \label{0q}
 \mu_t(F^\vartheta) \leq \mu_0(F^\vartheta) +\sum_{\theta \in \vartheta} \|b\|\langle \theta \rangle \int_0^t \mu_s(L^{+}F^{\vartheta\setminus \theta}) ds.
\end{equation}
For a singleton $\vartheta = \{\theta\}$, by \eqref{0na} we have
\[
 \mu_t(F^\vartheta) \leq (\varkappa_{\mu_0} + \|b\|t)\langle \theta \rangle. 
\]
Then we iterate this estimate in \eqref{0q} and obtain
\begin{equation}
 \label{0qa}
 \mu_t(F^\vartheta) \leq \left(\varkappa_{\mu_0} + \|b\| t \right)^n \langle \theta_1 \rangle \cdots \langle \theta_n \rangle.
\end{equation}
Therefore, we have proved the following statement.
\begin{lemma}
 \label{01lm}
 For every $\mu_0 \in \mathcal{P}_{\rm exp}$, each solution $\mu_t$ of the Fokker-Planck equation \eqref{FPE} for $(L,\mathcal{F},\mu_0)$ lies in $\mathcal{P}_{\rm exp}$ for all $t>0$. Moreover, its type satisfies $\varkappa_{\mu_t} \leq \varkappa_{\mu_0} + \|b\| t$.  
\end{lemma}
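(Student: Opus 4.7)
My plan is to evaluate the Fokker--Planck equation on the symmetrized products $F^\vartheta$ defined in \eqref{0n}, which directly encode the factorial moments of $\mu_t$, and then to close a linear recursion by induction on $n=|\vartheta|$.

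First, I would verify that the Fokker--Planck equation actually applies to $F^\vartheta$, even though $F^\vartheta\notin\mathcal{F}$ in general. By Remark \ref{01rk}, $F^\vartheta$ is a finite signed combination of functions $\varPsi^{\widehat\vartheta}$ with each $\widehat\theta\in\Theta$, and Lemma \ref{0lm} grants that $\mu_t$ satisfies \eqref{FPE} for every such $\varPsi^{\widehat\vartheta}$; hence linearity transfers \eqref{FPE} to $F=F^\vartheta$.

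Next I would exploit the positivity structure. Since each $\theta_i\geq 0$, the function $F^\vartheta$ is non-negative and non-decreasing in $\gamma$ (adjoining a point only enlarges the nested sum in \eqref{0n}), so $L^{-}F^\vartheta\leq 0$. Dropping this non-positive term from the integrated Fokker--Planck equation yields
\begin{equation*}
  \mu_t(F^\vartheta)\;\leq\;\mu_0(F^\vartheta)+\int_0^t \mu_s(L^{+}F^\vartheta)\,ds,
\end{equation*}
while the symmetry of \eqref{0n} in the factors $\theta_i$ gives $L^{+}F^\vartheta(\gamma)=\sum_{\theta\in\vartheta}\langle b\theta\rangle\,F^{\vartheta\setminus\theta}(\gamma)\leq \|b\|\sum_{\theta\in\vartheta}\langle\theta\rangle\,F^{\vartheta\setminus\theta}(\gamma)$.

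A straightforward induction on $n=|\vartheta|$, with base estimate $\mu_0(F^\vartheta)\leq \varkappa_{\mu_0}^{\,n}\langle\theta_1\rangle\cdots\langle\theta_n\rangle$ supplied by \eqref{0na} together with $\mu_0\in\mathcal{P}_{\rm exp}$, then produces the bound \eqref{0qa}. Recalling that $F^\vartheta=n!\,KG^\vartheta$ from \eqref{X7}--\eqref{X6}, this is exactly the defining inequality \eqref{X8} of a sub-Poissonian measure of type at most $\varkappa_{\mu_0}+\|b\|t$, once one removes the restriction $\theta_i\in\Theta$ by a rescaling (both sides of \eqref{0qa} are $n$-homogeneous in $(\theta_1,\dots,\theta_n)$, so any $\theta_i\in C_{\rm cs}^{+}(X)$ can be shrunk into $\Theta$ and the factors reinstated at the end). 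The only mildly delicate point I expect to require care is this final extension from $\Theta$ to all of $C_{\rm cs}^{+}(X)$; the genuinely new ingredient---controlling the sign of the quadratic term in $L^{-}$ via a carefully chosen $\mathcal{F}$---has already been handed to us through Lemma \ref{0lm}.
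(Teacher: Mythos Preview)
Your proposal is correct and follows essentially the same route as the paper: use Remark~\ref{01rk} and Lemma~\ref{0lm} to justify testing \eqref{FPE} against $F^\vartheta$, drop the non-positive $L^{-}$ contribution, compute $L^{+}F^\vartheta=\sum_{\theta\in\vartheta}\langle b\theta\rangle\,F^{\vartheta\setminus\theta}$, and close the recursion by induction to obtain \eqref{0qa}. Your explicit homogeneity argument extending \eqref{0qa} from $\vartheta\subset\Theta$ to arbitrary $\theta_i\in C_{\rm cs}^{+}(X)$ is a point the paper leaves implicit, so you are in fact slightly more careful there.
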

For $\mu_t$ as in Lemma \ref{01lm}, by \eqref{Gas1} it follows that $\mu_t(\Gamma_*)=1$ for all $t\geq 0$, which by Remark \ref{Gasrk} yields that each $\mu_t$ can be redefined as an element of $\mathcal{P}(\Gamma_*)$. At the same time, for $\gamma \in \Gamma_*$ and $\vartheta \subset \Theta$, by \eqref{0c} and \eqref{0d} it follows that  
\begin{eqnarray*}
 |L^{-} \varPsi^\vartheta_\tau (\gamma)|&  \leq & H_\vartheta (\gamma) \sum_{\varnothing \neq \vartheta' \subset \vartheta} \varPsi^{\vartheta\setminus \vartheta'}_\tau (\gamma), \\[.2cm] \nonumber H_\vartheta (\gamma) & = & \sum_{x\in \gamma} \theta_*(x)\left( m(x)   +  \sum_{y\in \gamma\setminus x} a(x-y) \right) \\[.2cm] \nonumber & \leq & \sum_{x\in \gamma}\theta_* (x)\left(m(x) + \gamma(\psi) \|a\| \ell_a (x) \right)   < \infty,
\end{eqnarray*}
see also \eqref{6c}. Here $\theta_*\in C^{+}_{\rm cs} (X)$ is such that $\theta (x) \leq \theta_*(x)$ for all $\theta \in \vartheta$ and $x\in X$.

\section{Solving the Fokker-Planck equation}

In this section, we give the proof of  Theorem \ref{1tm}, which is essentially based on the results of \cite{KK}.
In view of Lemma \ref{01lm}, the evolution $\mu_0 \to \mu_t$ will be constructed as the evolution of the corresponding correlation functions in the following two steps: (a) constructing $k_{\mu_0} \to k_t$ in a scale of Banach spaces, see \eqref{Y2}; (b) proving that $k_t$ is the correlation function of a unique $\mu_t$.  

\subsection{Evolution of correlation functions }

The evolution of correlation functions is supposed to be obtained by employing the generator 
$L^\Delta$ deduced from the Kolmogorov operator \eqref{L} according to the following rule, cf. \eqref{X12}, 
\begin{equation}
  \label{X38a}
  \mu(L KG) = \langle \! \langle L^\Delta k_\mu, G \rangle \!
  \rangle,
\end{equation}
valid for $\mu\in \mathcal{P}_{\rm exp}$ and appropriate functions, e.g., for  $G\in B_{\rm bs}$, see Definition \ref{X1df}. With the help of \eqref{XL}, after some calculations, one obtains, see \cite[eqs. (2.9)--(2.11)]{KK}
\begin{equation}
  \label{X38}
  (L^\Delta k)(\eta) = \sum_{x\in \eta} b(x) k(\eta\setminus x) - E(\eta)
  k(\eta) - \int_X \left( \sum_{y\in \eta} a (x-y) \right) k(\eta
  \cup x) dx,
\end{equation}
where 
\begin{equation}
  \label{X37}
  E(\eta) = \sum_{x\in \eta} m(x) + \sum_{x\in \eta}\sum_{y\in \eta\setminus
  x}a(x-y), \qquad \eta\in \Gamma_{\rm fin}.
\end{equation}
The expression in \eqref{X38} will define 
linear operators acting in the Banach spaces
$\mathcal{K}_\alpha$, see (\ref{Y1}), (\ref{Y2}) and
Lemma \ref{Y3lm} below. To realize this, we set
\begin{equation}
  \label{X39}
  \mathcal{D}_\alpha = \{k \in \mathcal{K}_\alpha : L^\Delta k \in
  \mathcal{K}_\alpha\}, \qquad \alpha \in \mathds{R}.
\end{equation}
Keeping in mind the embedding as in (\ref{Y2}), for $\alpha' <
\alpha$ we now introduce a linear operator $L^\Delta_{\alpha
\alpha'}: \mathcal{K}_{\alpha'} \to \mathcal{K}_{\alpha}$ which acts
according to (\ref{X38}). It turns out that this operator is bounded
as its operator norm satisfies, see \cite[eq. (2.26)]{KK},
\begin{equation}
  \label{X40}
\|L^\Delta_{\alpha \alpha'}\| \leq \frac{4\|a\|_\psi}{e^2 (\alpha -
\alpha')^2} + \frac{\|b\|e^{-\alpha'} + \|m\| + \langle a \rangle
e^{\alpha}}{e (\alpha - \alpha')},
\end{equation}
where $\langle a \rangle$ is as in \eqref{6a}.
Let $\mathcal{L}_{\alpha \alpha'}$ stand for the Banach space of
bounded linear operators $A :\mathcal{K}_{\alpha'} \to
\mathcal{K}_{\alpha}$. Then 
\begin{equation}
\label{Qqz}
L^\Delta_{\alpha \alpha'}\in
\mathcal{L}_{\alpha \alpha'} \quad {\rm and} \quad L^\Delta_{\alpha \alpha'}\vert_{\mathcal{K}_{\alpha''}} = L^\Delta_{\alpha \alpha''} \quad {\rm for} \quad \alpha'' < \alpha'. 
\end{equation}
In view of (\ref{Y2}), this fact
obviously implies that, see (\ref{X39}),
\begin{equation}
  \label{X41}
  \forall \alpha' < \alpha \qquad \mathcal{K}_{\alpha'} \subset
  \mathcal{D}_\alpha.
\end{equation}
Let us consider the following Cauchy problem in
$\mathcal{K}_{\alpha}$ for the operator $(L^\Delta,
\mathcal{D}_\alpha)$
\begin{equation}
  \label{X42}
  \frac{d}{dt} k_t = L^\Delta k_t, \qquad k_t|_{t=0} = k_0 \in
  \mathcal{D}_\alpha.
\end{equation}
In this general setting, it is barely possible to solve (\ref{X42})
by applying $C_0$-semigroup methods. Recall that we deal here
with $L^\infty$-type spaces, see (\ref{Y1}). However, if one takes
$k_0 \in \mathcal{K}_{\alpha'}$ for some $\alpha'< \alpha$, i.e.,
from a subset of the domain, see (\ref{X41}), a solution can be
obtained in the following way. Define
\begin{equation}
  \label{X43}
  (S (t) k)(\eta) = \exp\left( - t E(\eta) \right) k(\eta), \qquad
  t>0,
\end{equation}
where $E(\eta)$ is as in (\ref{X37}). Note that this is one of the
steps, where we properly take into account the sign of the quadratic
term in $L^{-}$. By means of (\ref{X43}) we then define $S_{\alpha
\alpha'}(t)\in \mathcal{L}_{\alpha \alpha'}$. One can show that the
map $t \mapsto S_{\alpha \alpha'}(t)\in \mathcal{L}_{\alpha
\alpha'}$ is continuous. Obviously, for each $t$, $S(t)$ as in (\ref{X43}) defines
a bounded operator acting from $\mathcal{K}_\alpha$ to
$\mathcal{K}_\alpha$. We use it as $S_{\alpha \alpha'}(t)$ (i.e.,
acting to a bigger space) to secure the continuity just mentioned. Let
$A_{\alpha \alpha'}\in \mathcal{L}_{\alpha \alpha'}$ be defined by
the expression
\begin{equation}
  \label{X44}
  (A k)(\eta) = - E(\eta) k(\eta),
\end{equation}
i.e., it is the multiplication operator by $-E(\eta)$. Clearly, the
map $t \mapsto S_{\alpha \alpha'}(t)$ is differentiable and the
following holds
\begin{equation*}
  \frac{d}{dt}  S_{\alpha \alpha'}(t) = A_{\alpha \alpha''}  S_{\alpha''
  \alpha'}(t) = S_{\alpha
  \alpha''}(t)  A_{\alpha'' \alpha'},
\end{equation*}
for each $\alpha'' \in (\alpha' , \alpha)$. Now we set $B= L^\Delta
- A$ and define the corresponding bounded linear operator $B_{\alpha
\alpha'}\in \mathcal{L}_{\alpha \alpha'}$. Let us prove that its norm 
satisfies, cf. (\ref{X40}),
\begin{equation}
\label{X46}
  \| B_{\alpha \alpha'}\| \leq \frac{\|b\| e^{-\alpha '} + \langle a \rangle e^\alpha}{e(\alpha -
  \alpha')}.
\end{equation}
By \eqref{X38}, we write $B= B' + B''$ with
\[
(B'k)(\eta) =  \sum_{x\in \eta} b(x) k(\eta\setminus x), \qquad (B''k)(\eta) = - \int_X \left( \sum_{y\in \eta}a(x-y)\right) k(\eta \cup x) d x.
\]
Next, by \eqref{Y1}, we have
\begin{equation}
\label{OC}
|k(\eta)| \leq \|k\|_{\alpha'} e^{\alpha'|\eta|}, \qquad \eta \in \Gamma_{\rm fin}.
\end{equation}
Then by the elementary inequality $te^{-\kappa t} \leq 1/e \kappa$, $t, \kappa >0$,  we get from \eqref{OC} the following 
\begin{eqnarray}
\label{OC1}
\|B'k\|_\alpha \leq \|b\| \|k\|_{\alpha'} e^{-\alpha'} \sup_{n\in \mathds{N}} n e^{- (\alpha-\alpha') n} \leq 
\frac{\|b\| e^{-\alpha'}}{e(\alpha - \alpha')}  \|k\|_{\alpha'}.
\end{eqnarray}
Similarly,
\begin{eqnarray*}
\|B''k\|_\alpha \leq \langle a \rangle \|k\|_{\alpha'} e^{\alpha'} \sup_{n\in \mathds{N}} n e^{- (\alpha-\alpha') n} \leq 
\frac{\langle a \rangle e^{\alpha}}{e(\alpha - \alpha')}  \|k\|_{\alpha'},
\end{eqnarray*}
which together with \eqref{OC1} yields \eqref{X46}.
It is crucial that $\alpha - \alpha'$ appears in \eqref{X46} in the first power.
Define
\begin{equation}
  \label{X46a}
  T(\alpha,\alpha') = \frac{\alpha -\alpha'}{\|b\| e^{-\alpha '} + \langle a \rangle
  e^\alpha}.
\end{equation}
Fix now some $\delta< \alpha - \alpha'$ and $l\in \mathds{N}$, and
then set
\begin{gather*}
  \alpha^{2s} = \alpha' + \frac{s}{l+1} \delta + s \epsilon, \qquad
  \epsilon = (\alpha - \alpha' -\delta) /l, \\[.2cm] \nonumber
  \alpha^{2s+1} = \alpha' + \frac{s+1}{l+1} \delta + s \epsilon,
  \qquad s= 0,1, \dots , l.
\end{gather*}
Note that $\alpha^0=\alpha'$ and $\alpha^{2l+1} = \alpha$. For
$t>0$, set
\begin{equation*}
  \mathcal{T}_l = \{ (t, t_1, \dots , t_l) : 0\leq t_l \leq t_{l-1}
  \leq \cdots \leq t_1\leq t\}\subset (\mathds{R}_{+})^{l+1},
\end{equation*}
and then
\begin{gather*}
  \Pi^l_{\alpha\alpha'} (t, t_1, \dots , t_l) = S_{\alpha
  \alpha^{2l}}(t-t_1) B_{\alpha^{2l}\alpha^{2l-1}} \cdots \times
  \\[.2cm] \nonumber \times
  S_{\alpha^{3}
  \alpha^{2}}(t-t_1) (t_{l-1}-t_l) B_{\alpha^2 \alpha^1} S_{\alpha^1
  \alpha'}(t_t), \quad (t, t_1, \dots , t_l) \in \mathcal{T}_l.
\end{gather*}
It is known, see \cite[Proposition 3.1]{KK}, that the map
\[
\mathcal{T}_l \ni (t, t_1, \dots , t_l) \mapsto
\Pi^l_{\alpha\alpha'} (t, t_1, \dots , t_l) \in
\mathcal{L}_{\alpha\alpha'}
\]
is continuous. Moreover, for each $\delta \in (0, \alpha-\alpha')$,
the operator norm satisfies
\begin{equation}
  \label{X50}
  \|\Pi^l_{\alpha\alpha'} (t, t_1, \dots , t_l)\| \leq
  \left(\frac{l}{eT(\alpha - \delta, \alpha')}\right)^l,
\end{equation}
see (\ref{X46a}). Define
\begin{gather}
  \label{X51}
Q_{\alpha \alpha'} (t) = S_{\alpha \alpha'}(t) + \sum_{l=1}^\infty
\int_0^t d t_1 \int_0^{t_1} d t_2 \cdots \int_0^{t_{l-1}} d t_l
\Pi^l_{\alpha\alpha'} (t, t_1, \dots , t_l).
\end{gather}
By means of (\ref{X50}) one readily gets that 
\begin{equation}
 \label{OC5}
 \|Q_{\alpha \alpha'} (t)\| \leq \frac{T(\alpha, \alpha')}{T(\alpha, \alpha') - t}, \qquad t \in [0, T(\alpha, \alpha')),
\end{equation}
which means that $Q_{\alpha \alpha'} (t) \in \mathcal{L}_{\alpha\alpha'}$ for such $t$.  
\begin{proposition}\cite[Proposition 3.2]{KK}
  \label{X4pn}
For each $\alpha, \alpha'\in \mathds{R}$, $\alpha' <\alpha$ and $t<
T(\alpha,\alpha')$, the series in (\ref{X51}) converges in the norm
of $\mathcal{L}_{\alpha\alpha'}$ in such a way that
\begin{itemize}
  \item[(i)] the map $[0,T(\alpha, \alpha')) \ni t \mapsto Q_{\alpha \alpha'}
  (t)\in \mathcal{L}_{\alpha\alpha'}$ is continuous and $Q_{\alpha \alpha'}
  (0)$ is  the embedding operator as in (\ref{Y2});
  \item[(ii)] for each $\alpha''\in (\alpha',\alpha)$ and $t< \min\{
  T(\alpha'',\alpha'); T(\alpha,\alpha'')\}$, the following holds
  \begin{equation}
    \label{X52}
    \frac{d}{dt}Q_{\alpha \alpha'}
  (t) = L^\Delta_{\alpha \alpha''} Q_{\alpha'' \alpha'}
  (t)= Q_{\alpha \alpha''} (t)L^\Delta_{\alpha'' \alpha'}.
  \end{equation}
\item[(iii)] the operators $Q_{\alpha\alpha'}(t)$ enjoy the
semigroup property
\begin{equation}
  \label{X53}
  Q_{\alpha \alpha'}
  (t+s) = Q_{\alpha \alpha''} (t)
  Q_{\alpha'' \alpha'}
  (s),
\end{equation}
that holds provided $t< T(\alpha, \alpha'')$, $s<
  T(\alpha'', \alpha')$ and $t+s < T(\alpha, \alpha')$.
\end{itemize}
\end{proposition}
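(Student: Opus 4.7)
The plan is to treat $L^\Delta = A + B$ as the perturbation of the multiplication operator $A$, which generates the explicit semigroup $S(t)$ defined in \eqref{X43}, by the off-diagonal part $B$, and to recognize \eqref{X51} as the corresponding Dyson/Duhamel expansion written in the scale structure $\mathcal{L}_{\alpha\alpha'}$. The four claims then unfold in the order: absolute convergence of the series, continuity and the value at $t=0$, term-by-term differentiation yielding the Cauchy problem, and finally the semigroup identity by uniqueness.

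For absolute convergence I would combine \eqref{X50} with the simplex volume $|\mathcal{T}_l| = t^l/l!$ to bound the norm of the $l$-th term by $(t^l/l!)(l/(eT(\alpha-\delta,\alpha')))^l$. The elementary inequality $l^l \leq e^l\, l!$, coming from $e^l = \sum_{k\geq 0} l^k/k! \geq l^l/l!$, reduces this to the geometric majorant $(t/T(\alpha-\delta,\alpha'))^l$. Since $T(\cdot,\alpha')$ defined in \eqref{X46a} is continuous and decreasing in its first argument in a neighborhood of $\alpha$, for every $t < T(\alpha,\alpha')$ one may pick $\delta \in (0,\alpha-\alpha')$ with $t < T(\alpha-\delta,\alpha')$, giving uniform convergence of \eqref{X51} on compact subsets of $[0,T(\alpha,\alpha'))$. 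Continuity of $t \mapsto Q_{\alpha\alpha'}(t)$ follows from continuity of each partial sum; at $t=0$ only the $l=0$ term survives, and $S_{\alpha\alpha'}(0)$ is, by \eqref{X43}, the identity on fibers, i.e., the embedding of \eqref{Y2}.

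For the Cauchy problem \eqref{X52} I would differentiate the series term by term along an intermediate scale $\alpha'' \in (\alpha',\alpha)$. The $l=0$ term yields $A_{\alpha\alpha''}S_{\alpha''\alpha'}(t)$ via the formula displayed after \eqref{X44}. For $l \geq 1$, Leibniz on the outer integral contributes the boundary value at $t_1 = t$, in which $S(t-t_1) = S(0)$ collapses to an embedding and leaves $B_{\alpha\alpha''}$ composed with a $(l-1)$-fold Dyson integrand on the scale $\alpha''\alpha'$; differentiation under the remaining integrals of the leading $S(t-t_1)$-factor contributes $A_{\alpha\alpha''}$ composed with $\Pi^l_{\alpha''\alpha'}(\cdot)$. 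Summing and regrouping one obtains $(A_{\alpha\alpha''}+B_{\alpha\alpha''})Q_{\alpha''\alpha'}(t) = L^\Delta_{\alpha\alpha''}Q_{\alpha''\alpha'}(t)$; the right-hand equality in \eqref{X52} is proved by the symmetric manipulation starting from the innermost $S$-factor. Term-by-term differentiation is justified because the derived series is controlled by the same geometric bound as before on any subinterval $[0,t']$ with $t' < T(\alpha,\alpha')$.

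The semigroup identity \eqref{X53} I would obtain from uniqueness of the Cauchy problem just established: for fixed $s < T(\alpha'',\alpha')$, both $t \mapsto Q_{\alpha\alpha'}(t+s)$ and $t \mapsto Q_{\alpha\alpha''}(t)Q_{\alpha''\alpha'}(s)$ satisfy the same equation $u'(t) = L^\Delta_{\alpha\alpha'''}u(t)$ on an auxiliary scale $\alpha''' \in (\alpha'',\alpha)$ with the same initial value $Q_{\alpha''\alpha'}(s)$, and a Gronwall-type argument carried out through the scale iteration produces equality. An alternative and purely combinatorial route is a Fubini computation: the $l$-fold Dyson integral over $[0,t+s]$ splits along the time $t$ into products of $k$- and $(l-k)$-simplices, reassembling as the product $Q_{\alpha\alpha''}(t)Q_{\alpha''\alpha'}(s)$. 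The principal obstacle throughout is the scale bookkeeping: neither $L^\Delta$ nor $B$ is bounded on a single $\mathcal{K}_\alpha$, so every differentiation in time forces a strict decrease of scale, and the combinatorial factor $l^l$ in \eqref{X50} --- coarser than $l!$ --- is exactly what restricts $t$ to the range $t < T(\alpha,\alpha')$ and forces the sharp Stirling-type bound in the convergence step.
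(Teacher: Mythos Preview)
The paper does not itself prove this proposition: it is quoted verbatim as \cite[Proposition 3.2]{KK}, introduced only by the sentence ``By means of (\ref{X50}) one can prove the following.'' So there is no in-paper argument to compare against; the result is imported.

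Your sketch is the standard Dyson--Ovsjannikov argument and is essentially correct. One small imprecision: you assert that $T(\cdot,\alpha')$ is \emph{decreasing} in its first argument near $\alpha$. From \eqref{X46a}, as $\delta$ increases both numerator and denominator of $T(\alpha-\delta,\alpha')$ decrease, so monotonicity is not automatic; what you actually need (and what suffices) is mere continuity of $\delta\mapsto T(\alpha-\delta,\alpha')$ at $\delta=0$, which immediately gives $t<T(\alpha-\delta,\alpha')$ for $\delta$ small. With that adjustment, the convergence step via $l^l\le e^l l!$ and the geometric majorant is exactly the intended use of the bound \eqref{X50}, the term-by-term differentiation to produce $(A+B)Q=L^\Delta Q$ is the standard Duhamel computation, and either of your two routes to \eqref{X53} (uniqueness via \eqref{X52}, or the simplex-splitting Fubini identity) is sound.
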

This assertion allows us to solve the Cauchy problem in (\ref{X42})
in the following form.
\begin{proposition}\cite[Lemma 3.3]{KK}
  \label{X5pn}
For each $k_0 \in \mathcal{K}_{\alpha'}$, the problem in (\ref{X42})
has a unique classical solution $k_t \in \mathcal{K}_\alpha$, $t <
T(\alpha, \alpha')$, given by the formula
\begin{equation}
  \label{X54}
  k_t = Q_{\alpha \alpha'}(t) k_0.
\end{equation}
This solution has the properties: (a) $k_t(\varnothing) =k_0 (\varnothing)$, $t<
T(\alpha, \alpha')$; (b) its norm in $\mathcal{K}_\alpha$, see \eqref{Y1}, satisfies, cf. \eqref{OC5} 
\begin{equation}
 \label{X54a}
 \|k_t\|_\alpha \leq \frac{T(\alpha, \alpha')}{T(\alpha, \alpha') -t} \|k_0\|_{\alpha'}.
\end{equation}
\end{proposition}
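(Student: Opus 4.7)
My plan is to take $k_t:=Q_{\alpha\alpha'}(t)k_0$ as the candidate solution and derive every claim from Proposition \ref{X4pn} and the series \eqref{X51}. Item (i) of that proposition already gives $k_t|_{t=0}=k_0$ and continuity of $t\mapsto k_t$ in $\mathcal{K}_\alpha$. For the ODE I fix $t<T(\alpha,\alpha')$, choose an intermediate $\alpha''\in(\alpha',\alpha)$ so that $t$ lies below both $T(\alpha,\alpha'')$ and $T(\alpha'',\alpha')$ (using the semigroup identity \eqref{X53} to concatenate if a single intermediate level does not suffice), and note that $Q_{\alpha''\alpha'}(t)k_0\in\mathcal{K}_{\alpha''}\subset\mathcal{D}_\alpha$ by \eqref{X41}; the first equality in \eqref{X52} then yields $\dot k_t = L^\Delta_{\alpha\alpha''}Q_{\alpha''\alpha'}(t)k_0\in\mathcal{K}_\alpha$, which shows that $k_t$ is a classical solution of \eqref{X42}. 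Assertion (a) is then immediate from the ODE: inspecting \eqref{X38} at $\eta=\varnothing$, all three summands vanish because the two finite sums over $x\in\varnothing$ are empty and $E(\varnothing)=0$ by \eqref{X37}, so $\dot k_t(\varnothing)=0$ and hence $k_t(\varnothing)=k_0(\varnothing)$.

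For the norm bound (b), I would estimate \eqref{X51} termwise. Since $S(t)$ is multiplication by $e^{-tE(\eta)}\in(0,1]$ and the embedding $\mathcal{K}_{\alpha'}\hookrightarrow\mathcal{K}_\alpha$ has operator norm at most $1$, one has $\|S_{\alpha\alpha'}(t)\|\le 1$. For $l\ge 1$, integrating \eqref{X50} over the simplex, whose volume is $t^l/l!$, and using $(l/e)^l\le l!$ yields
\[
\bigg\|\int_0^t\!dt_1\cdots\!\int_0^{t_{l-1}}\!dt_l\,\Pi^l_{\alpha\alpha'}(t,t_1,\ldots,t_l)\bigg\|_{\mathcal{L}_{\alpha\alpha'}}\le\frac{t^l}{l!}\left(\frac{l}{eT(\alpha-\delta,\alpha')}\right)^{\!l}\le\left(\frac{t}{T(\alpha-\delta,\alpha')}\right)^{\!l}.
\]
Summing the geometric series for $t<T(\alpha-\delta,\alpha')$ and combining with the $S$-term gives $\|Q_{\alpha\alpha'}(t)\|_{\mathcal{L}_{\alpha\alpha'}}\le T(\alpha-\delta,\alpha')/(T(\alpha-\delta,\alpha')-t)$; by continuity of $T$ in \eqref{X46a}, sending $\delta\downarrow 0$ produces the bound \eqref{X54a}.

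Uniqueness is the main obstacle, and I would handle it by a Duhamel iteration adapted to the Ovsyannikov scale $\{\mathcal{K}_\beta\}$. If $\tilde k_t$ is a second classical solution with the same datum, the difference $w_t\in\mathcal{D}_\alpha$ vanishes at $t=0$ and satisfies $\dot w_t=Aw_t+Bw_t$ with $A$ as in \eqref{X44}. Variation of constants with respect to the multiplication semigroup $S$ gives
\[
w_t=\int_0^t S_{\alpha\alpha''}(t-s)\,B_{\alpha''\alpha'''}\,w_s\,ds,
\]
and iterating this identity along a partition $\alpha>\alpha_1>\alpha_2>\cdots>\alpha'$ with geometrically decreasing gaps --- exactly the partition underlying \eqref{X50} --- produces, after $l$ steps, an operator-integral whose norm in $\mathcal{L}_{\alpha\alpha'}$ is controlled by the $l$-th term of the series in \eqref{X51}, i.e.\ by $(t/T(\alpha-\delta,\alpha'))^l$. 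This remainder tends to $0$ as $l\to\infty$ whenever $t<T(\alpha,\alpha')$ and $\delta$ is chosen small, forcing $w_t\equiv 0$. The delicate point is arranging the partition so that the bound \eqref{X50} genuinely applies at every step of the iteration and the leftover after $l$ steps is dominated by the $l$-th summand of a series already known to converge.
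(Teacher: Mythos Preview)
The paper does not prove this proposition; it is quoted verbatim from \cite[Lemma 3.3]{KK}. So there is no ``paper's own proof'' to compare against, and your attempt has to stand on its own.

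Your treatment of existence, property (a), and the norm bound (b) is correct. The derivation of \eqref{X54a} from \eqref{X50} via the simplex volume and the inequality $(l/e)^l\le l!$, followed by $\delta\downarrow 0$, is clean and is exactly the intended argument.

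The uniqueness part, however, has a genuine gap that is more than a ``delicate arrangement of the partition.'' Your Duhamel identity
\[
w_t=\int_0^t S_{\alpha\alpha''}(t-s)\,B_{\alpha''\alpha'''}\,w_s\,ds
\]
requires $w_s\in\mathcal{K}_{\alpha'''}$ with $\alpha'''<\alpha''<\alpha$, because $B_{\alpha''\alpha'''}$ is only defined as a bounded operator from the \emph{smaller} space $\mathcal{K}_{\alpha'''}$ into $\mathcal{K}_{\alpha''}$. But a competing classical solution $\tilde k_s$ is only known to lie in $\mathcal{D}_\alpha\subset\mathcal{K}_\alpha$, and nothing in the hypotheses forces $w_s=\tilde k_s-k_s$ into any $\mathcal{K}_{\alpha'''}$ with $\alpha'''<\alpha$. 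Indeed, for $k\in\mathcal{K}_\alpha$ one has $|(Bk)(\eta)|\lesssim |\eta|\,e^{\alpha|\eta|}$, which is \emph{not} controlled in $\|\cdot\|_\alpha$; membership in $\mathcal{D}_\alpha$ only guarantees $L^\Delta k\in\mathcal{K}_\alpha$, not $Ak$ or $Bk$ separately. So the very first step of your iteration is not justified, and the later partition bookkeeping never gets off the ground.

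The standard remedy --- and the one the present paper itself uses later, around \eqref{0Cb}, to prove uniqueness for the Fokker--Planck equation --- is to go \emph{outward} in the scale rather than inward: pick $\alpha_1>\alpha$, set $\phi(s)=Q_{\alpha_1\alpha}(t-s)\tilde k_s$, and differentiate using the \emph{second} equality in \eqref{X52}, namely $\frac{d}{d\tau}Q_{\alpha_1\alpha}(\tau)=Q_{\alpha_1\alpha''}(\tau)L^\Delta_{\alpha''\alpha}$ with $\alpha<\alpha''<\alpha_1$. Here $L^\Delta_{\alpha''\alpha}$ is applied to $\tilde k_s\in\mathcal{K}_\alpha$, which is legitimate, and consistency of the $Q$-family on the scale makes the two terms in $\phi'(s)$ cancel. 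This gives $\tilde k_t=Q_{\alpha_1\alpha}(t)k_0$ for $t$ below $\sup_{\alpha_1>\alpha}T(\alpha_1,\alpha)$; one then iterates in time (the step size does not shrink, because it depends only on $\alpha$) to reach all $t<T(\alpha,\alpha')$. Rewriting your uniqueness argument along these lines would close the gap.
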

At this point, we should stress that the aforementioned solution
$k_t$ need not be related to any state $\mu\in \mathcal{P}_{\rm
cor}(\Gamma)$. In particular, $k_t$ need not be positive, cf.
\eqref{X13}, which means that Proposition \ref{X5pn} says not too
much about the evolution of states of the model we consider.
This drawback is overcome by a method elaborated in \cite[subsect.
3.2]{KK}, based on Proposition \ref{Y1pn} and certain approximations
of the solution $k_t$. Roughly speaking, this method consists in approximating the initial model by a family of auxiliary models indexed by $\sigma \in (0,1]$, cf. Sect. 6 below, for which the analogs $k_t^\sigma$ of $k_t$ are correlation functions by construction itself, and hence have the properties mentioned in Proposition \ref{Y1pn}. 
It is then proven that these properties persist in the limit $\sigma \to 0$.    
One of the outcomes of this result is the proof of the positivity
of $k_t$, which allows us to continue the evolution $t \mapsto k_t$
to all $t>0$. The corresponding result can be formulated as follows.
\begin{proposition}\cite[Theorem 2.4]{KK}
  \label{X6pn}
Let $\mu_0\in \mathcal{P}_{\rm exp}$ be such that its correlation
function lies in $\mathcal{K}_{\alpha_0}$. Then the solution of the
problem in (\ref{X42}) with $\alpha >\alpha_0$ can be
continued to all $t>0$ uniquely, and in such a way that the following holds
\begin{equation}
  \label{X55}
  0 \leq k_t(\eta) \leq \sum_{\xi \subset \eta} e(\xi;\varrho_t)
  e(\eta\setminus \xi; q_t) k_{\mu_0} (\eta\setminus \xi), \qquad \eta \in \Gamma_{\rm fin},
\end{equation}
where $\varrho_t$ and $q_t$ are as in (\ref{Y}). Moreover, for each
$t>0$, this solution $k_t$ is the correlation function of a unique
$\mu_t \in \mathcal{P}_{\rm exp}$, which satisfies \eqref{NG1}. 
\end{proposition}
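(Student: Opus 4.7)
The plan is to combine the local existence given by Proposition \ref{X5pn} with an approximation scheme that simultaneously delivers positivity and the upper bound \eqref{X55}, then to iterate in order to extend the solution to all $t > 0$, and finally to invoke Proposition \ref{Y1pn} to identify $k_t$ with the correlation function of a genuine measure. For fixed $\alpha > \alpha_0$, Proposition \ref{X5pn} supplies a unique classical solution $k_t = Q_{\alpha\alpha_0}(t)\, k_{\mu_0}$ on the short interval $[0, T(\alpha, \alpha_0))$; what remains is to prove that this $k_t$ is nonnegative and pointwise dominated by the right-hand side of \eqref{X55}, since such a bound controls $\|k_t\|_\alpha$ well enough to bootstrap.

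For the joint proof of positivity and the Poisson-thinning bound, I would approximate $L^\Delta$ by a family $L^\Delta_N$ obtained through truncation --- for instance, replacing $b$ and $a$ by $b_N = b\, \mathds{1}_{\{|x|\leq N\}}$ and a similarly cut off $a_N$ --- so that the approximate dynamics support only finitely many points at any given time. The corresponding evolution on $\Gamma_{\rm fin}$ admits a probabilistic realization as a finite-rate pure-jump Markov process, and its correlation functions $k_t^N$ are therefore automatically nonnegative correlation functions of honest probability measures $\mu^N_t \in \mathcal{P}_{\rm exp}$. Splitting $L^\Delta_N = L^\Delta_{0,N} + R_N$, where $L^\Delta_{0,N}$ corresponds to the pure immigration--mortality model with $a \equiv 0$ and $R_N$ collects all contributions of $a_N$ to \eqref{X37} and \eqref{X38}, one checks that $R_N k \leq 0$ whenever $k \geq 0$, as both contributions enter with a minus sign. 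A Duhamel iteration in $\mathcal{K}_\alpha$ then yields the pointwise domination $k^N_t \leq k^{0,N}_t$, where the right-hand side is the explicitly solvable birth--death system whose correlation function at $\eta$ equals the right-hand side of \eqref{X55} with the truncated parameters. Passing $N \to \infty$ by monotonicity (and using that the construction of Proposition \ref{X4pn} depends continuously on the parameters entering the estimate \eqref{X40}) transfers positivity and \eqref{X55} to $k_t$ itself.

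Once \eqref{X55} is in hand on $[0, T(\alpha,\alpha_0))$, the norm $\|k_t\|_\alpha$ stays bounded as $t \to T(\alpha,\alpha_0)$, so $k_t$ at any such time lies in some $\mathcal{K}_{\alpha'}$ with $\alpha' < \alpha$ and controlled norm; Proposition \ref{X5pn} can then be reapplied from this new initial datum to extend by a further positive time step, and iteration produces a global solution. At every $t > 0$, the properties $k_t(\varnothing) = 1$, $k_t \geq 0$, and the functional inequality $\langle\!\langle k_t, G \rangle\!\rangle \geq 0$ for $G \in B^\star_{\rm bs}$ (inherited from the approximations $k_t^N$, each a correlation function of an actual measure) place us exactly in the hypothesis of Proposition \ref{Y1pn}, producing the required unique $\mu_t \in \mathcal{P}_{\rm exp}$. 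The moment bound \eqref{NG1} then follows by writing $\mu_t(N_\Lambda^n)$ as a Stirling combination of factorial moments $\chi^{(k)}_{\mu_t}(\Lambda^k)$ and integrating $k^{(k)}_t$ over $\Lambda^k$; obtaining uniformity in $t$ --- as opposed to mere polynomial growth in $t$ that \eqref{X55} literally provides --- requires exploiting the strict dissipativity of the quadratic competition term beyond what is encoded in the thinning bound, and this is the single place where the logistic structure must be used at full strength. The main obstacle throughout is precisely this simultaneous control of positivity and the sharp upper bound: no $C_0$-semigroup framework is available for $L^\Delta$ in $\mathcal{K}_\alpha$, and the competition term contributes with the correct sign only within a carefully chosen approximation scheme whose limit must be controlled in the $\mathcal{K}_\alpha$ topology.
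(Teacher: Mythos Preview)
Your sketch is essentially the method the paper attributes to \cite{KK}: the paper does not reprove this proposition but indicates that positivity and the bound \eqref{X55} are obtained ``by a method elaborated in \cite[subsect.~3.2]{KK}, based on Proposition~\ref{Y1pn} and certain approximations of the solution $k_t$,'' and your truncation--comparison--limit scheme followed by an appeal to Proposition~\ref{Y1pn} is exactly of that type. Your identification of the right-hand side of \eqref{X55} as the correlation function of the explicitly solvable $a\equiv 0$ immigration--death system, together with the observation that both $a$-contributions to $L^\Delta$ act with a minus sign on nonnegative $k$, is the correct mechanism for the comparison.

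One point deserves tightening: in the continuation step you write that $\|k_t\|_\alpha$ stays bounded and hence $k_t$ lies in some $\mathcal{K}_{\alpha'}$ with $\alpha' < \alpha$. What \eqref{X55} actually gives is $k_t \in \mathcal{K}_{\alpha_t}$ with $e^{\alpha_t}$ of order $\varkappa_{\mu_0} + \|b\|t$, and there is no reason for $\alpha_t$ to stay below the original $\alpha$. The correct iteration (as the paper does implicitly via \eqref{X55a}) is to restart from $k_s \in \mathcal{K}_{\alpha_s}$ and evolve into a \emph{larger} space $\mathcal{K}_{\alpha}$ with $\alpha > \alpha_s$; one must then check that the resulting time increments $T(\alpha_s + 1, \alpha_s)$ sum to infinity, exactly as in the divergence argument around \eqref{Wie4}. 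Your acknowledgment that \eqref{NG1} requires the competition term at full strength, beyond what \eqref{X55} alone yields, is accurate and matches the paper's deferral of that bound to \cite[Theorem~2.5]{KK}.
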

Note that the trajectory $t \mapsto k_t$ mentioned in  Proposition
\ref{X6pn} has the following property: if $k_s \in
\mathcal{K}_{\alpha'}$ for some $s\geq 0$ and $\alpha'\in
\mathds{R}$, which can be established by (\ref{X55}) and the type of
$\mu_0$, then by (\ref{X53}) and (\ref{X54}) it follows that
\begin{equation}
  \label{X55a}
   k_{t+s} = Q_{\alpha \alpha'} (t) k_s,
\end{equation}
holding for some $\alpha > \alpha'$ and $t< T(\alpha, \alpha')$.
Then the uniqueness of this trajectory follows by its local
(in $t$) uniqueness, established in Proposition \ref{X5pn}.

\subsection{The proof of Theorem \ref{1tm}}

In  this subsection, we prove that the map $t \mapsto \mu_t \in
\mathcal{P}_{\rm exp}$ obtained according to Proposition \ref{X6pn}
solves (\ref{FPE}), and that it is a unique solution in the sense of Definition \ref{1df}.

We begin by recalling Lemma \ref{Y1lm} and the definition of
$\mathcal{G}_\alpha$ in (\ref{Y6}). Now we set, cf. (\ref{X44}),
\begin{gather}
  \label{X56}
  \breve{L}= \breve{A} + \breve{B},\\[.2cm] \nonumber
 (\breve{A} G) (\eta) = - E(\eta) G(\eta), \\[.2cm] \nonumber
 (\breve{B} G)(\eta) = - \sum_{x\in \eta}\left( \sum_{y\in \eta\setminus x}
 a(x-y)\right) G(\eta \setminus x) + \int_X b(x) G(\eta\cup x) d x.
\end{gather}
For $G\in B_{\rm bs}$, $\breve{L} G$ can be calculated point-wise in $\eta \in \Gamma_{\rm fin}$, see Definition \ref{X1df}, since the sums in (\ref{X56}) are finite for such
$G$. In the sequel, by writing $\breve{L} G$ we mean this kind of action of $\breve{L}$.
At the same time, the main purpose of introducing (\ref{X56}) is to define bounded linear operators
$\breve{L}_{\alpha' \alpha}: \mathcal{G}_{\alpha} \to
\mathcal{G}_{\alpha'}$, $\alpha' < \alpha$, see (\ref{Y7}). One can
show, see \cite[eq. (3.19)]{KK}, that they  satisfy
\begin{equation}
  \label{X57}
  \|\breve{L}_{\alpha' \alpha} \| \leq {\rm RHS (\ref{X40})}.
\end{equation}
By direct inspection, one checks that, see (\ref{X12}),
\begin{equation}
  \label{X57a}
\langle \! \langle L^\Delta_{\alpha \alpha'} k , G \rangle\!\rangle
= \langle \!  \langle k , \breve{L} G
\rangle\!\rangle = \langle \!  \langle k , \breve{L}_{\alpha'\alpha}G
\rangle\!\rangle,
\end{equation}
holding for each $k\in \mathcal{K}_{\alpha'}$, $G\in B_{\rm bs}$ and
$\alpha > \alpha'$, see (\ref{X57}). By \eqref{X57}, this can be extended to $G\in
\mathcal{G}_\alpha$, see Remark \ref{Y1rk}. Moreover, 
it is possible to construct maps $\breve{Q}_{\alpha'\alpha}(t): \mathcal{G}_\alpha\to \mathcal{G}_{\alpha'}$, dual to those described in 
Propositions \ref{X4pn} and \ref{X5pn}, that satisfy
\begin{equation}
 \label{X58a}
 \langle \!\langle  Q_{\alpha \alpha'}(t) k, G \rangle \!\rangle =  \langle \!\langle   k, \breve{Q}_{\alpha' \alpha}(t) G \rangle \!\rangle, \qquad t < T (\alpha, \alpha'),
\end{equation}
whereas the norm \eqref{Y5} of $G_t := \breve{Q}_{\alpha' \alpha}(t) G$ in $\mathcal{G}_{\alpha'}$, cf. \eqref{X54a}, satisfies
\begin{equation}
 \label{X58b}
 |G_t|_{\alpha'} \leq \frac{T(\alpha, \alpha')}{T(\alpha, \alpha') -t} |G|_{\alpha},
\end{equation}
see \cite[eqs. (3.20), (3.21)]{KK} for more details. 
\begin{lemma}
  \label{Y3lm}
For a given $t>0$, let $\mu_t\in \mathcal{P}_{\rm exp}$ be as in
Proposition \ref{X6pn} and $e^{\alpha'}$, $\alpha'\in \mathds{R}$,
be its type, see Definition \ref{X3df} and Lemma \ref{01lm}. Then for all $\alpha >
\alpha'$ and $G\in \mathcal{G}_\alpha$, the following holds
\begin{equation}
  \label{X59}
  \mu_t (L KG) = \mu_t (K \breve{L}_{\alpha'\alpha}G) = \langle \!
  \langle k_t , \breve{L}_{\alpha'\alpha}G \rangle\!\rangle = \langle \!
  \langle L^\Delta_{\alpha \alpha'} k_t , G \rangle\!\rangle.
\end{equation}
\end{lemma}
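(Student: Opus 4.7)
The plan is to establish the lemma first for $G \in B_{\rm bs}$, where every term is pointwise well-defined and finite, and then extend by density to $G \in \mathcal{G}_\alpha$. The cornerstone is the pointwise identity $(LKG)(\gamma) = (K\breve{L}G)(\gamma)$ for $G \in B_{\rm bs}$ and $\gamma \in \Gamma_*$, expressing that $\breve{L}$ in \eqref{X56} is the pre-adjoint of $L$ under the $K$-transform \eqref{X4}. Once this identity holds on $B_{\rm bs}$, integrating against $\mu_t$ gives the first equality in \eqref{X59}; the second is \eqref{X12} applied to $\breve{L}G \in B_{\rm bs}$ (which lies in $B_{\rm bs}$ by a direct check on its support); the third is \eqref{X57a}.

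To prove the pointwise identity for $G \in B_{\rm bs}$, I would start from the elementary relations $KG(\gamma \cup x) - KG(\gamma) = \sum_{\eta \Subset \gamma} G(\eta \cup x)$ and $KG(\gamma) - KG(\gamma \setminus x) = \sum_{\eta' \Subset \gamma \setminus x} G(\eta' \cup x)$, which follow directly from \eqref{X4}. Substituting these into \eqref{L} and interchanging the $x$-integral with the (finite) sum over $\eta \Subset \gamma$, the arrival part reproduces the integral term of $\breve{B}$. The linear $m$-part of $L^{-}$ yields the diagonal $m$-contribution to $E(\eta)$. The quadratic $a$-part, via the combinatorial splitting of $\sum_{x \in \gamma}\sum_{y \in \gamma\setminus x}$ according to whether $x$ and $y$ both lie in $\eta$, distributes into two pieces: the diagonal contribution $-\sum_{x \in \eta}\sum_{y \in \eta \setminus x} a(x-y) G(\eta)$ (combining with the $m$-part to form $E(\eta)$ in $\breve{A}$) and the off-diagonal contribution $-\sum_{x \in \eta}\bigl(\sum_{y \in \eta \setminus x} a(x-y)\bigr) G(\eta \setminus x)$ (the remaining term in $\breve{B}$). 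Absolute convergence of the $y$-summations for $\gamma \in \Gamma_*$ is guaranteed by \eqref{6c}.

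Next I would extend to $G \in \mathcal{G}_\alpha$ by approximating with $G_n \in B_{\rm bs}$, $|G_n - G|_\alpha \to 0$, possible by Remark \ref{Y1rk}(i). The operator bound \eqref{X57} yields $|\breve{L}G_n - \breve{L}_{\alpha'\alpha}G|_{\alpha'} \to 0$, and since $\mu_t$ is sub-Poissonian of type $e^{\alpha'}$, \eqref{X14} gives $\mu_t\bigl(K|\breve{L}G_n - \breve{L}_{\alpha'\alpha}G|\bigr) \leq |\breve{L}G_n - \breve{L}_{\alpha'\alpha}G|_{\alpha'} \to 0$, transferring (b) to the limit. The functionals $\langle\!\langle k_t, \cdot\rangle\!\rangle$ on $\mathcal{G}_{\alpha'}$ and $\langle\!\langle L^\Delta_{\alpha\alpha'} k_t, \cdot\rangle\!\rangle$ on $\mathcal{G}_\alpha$ are bounded in the respective norms via \eqref{Y5} and Ruelle's bound \eqref{X13}, so (c) and (d) also transfer. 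Finally, since $LKG_n = K\breve{L}G_n$ is Cauchy in $L^1(\mu_t)$, its $\mu_t$-a.e.\ limit serves as the intrinsic definition of $LKG$, yielding $\mu_t(LKG) = \mu_t(K\breve{L}_{\alpha'\alpha}G)$.

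The main obstacle is the pointwise identity in the first step: carefully bookkeeping how the quadratic competition term of $L^{-}$ under the $K$-transform splits between the diagonal $E(\eta)$ contribution of $\breve{A}$ and the off-diagonal $G(\eta \setminus x)$ contribution of $\breve{B}$, and verifying the legitimacy of interchanging sums and integrals on $\gamma \in \Gamma_*$ (which has full $\mu_t$-measure by \eqref{Gas1}). The density extension is then a routine continuity argument, relying crucially on the uniform operator bound \eqref{X57} on $\breve{L}_{\alpha'\alpha}$ and the sub-Poissonian control \eqref{X14}.
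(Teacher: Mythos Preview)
Your approach is essentially the same as the paper's: establish the pointwise identity \eqref{X58} for $G\in B_{\rm bs}$, deduce the chain \eqref{X59} on $B_{\rm bs}$ via \eqref{X12} and \eqref{X57a}, then extend by density using Remark~\ref{Y1rk}(i) and the operator bound \eqref{X57}. One small correction: your parenthetical claim that $\breve{L}G\in B_{\rm bs}$ for $G\in B_{\rm bs}$ is false---the term $\sum_{x\in\eta}\bigl(\sum_{y\in\eta\setminus x}a(x-y)\bigr)G(\eta\setminus x)$ allows one point of $\eta$ to lie outside the support $\Lambda$ of $G$---but this does no damage, since $\breve{L}G\in\mathcal{G}_{\alpha'}$ by \eqref{X57} and the extension of \eqref{X12} to $\mathcal{G}_{\alpha'}$ (implicit in Remark~\ref{Y1rk}(ii) and the Ruelle bound) already covers it.
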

\begin{proof}
It is possible to show, cf. \eqref{X38a} and \eqref{X57a}, that
\begin{equation}
  \label{X58}
  (L K G) (\gamma) = ( K \breve{L} G) (\gamma) , \qquad G\in
  {B}_{\rm bs}, \quad \gamma\in \Gamma.
\end{equation}
Then the first equality in (\ref{X59}) follows by the corresponding extension 
of (\ref{X58}) to $G\in \mathcal{G}_\alpha$. The second one follows by (\ref{X12}), whereas the
last equality is just (\ref{X57a}).
\end{proof}
\noindent {\it Proof of Theorem \ref{1tm}.} The inclusion $\mathcal{F}\subset \mathcal{F}_{\rm max}$ stated in claim (c) has been proved in Lemma \ref{Y1lm}. Let us prove that
the map $t\mapsto \mu_t$ as in Proposition \ref{X6pn} solves the
Fokker-Planck equation (\ref{FPE}) in the sense of Definition \ref{1df}
for each $F\in \mathcal{F}_{\rm max}$. To prove the validity of item (i), we fix some $T>0$ and then take $\alpha'= \ln (e^{\alpha_0} + \|b\| T)$; hence, $k_{\mu_t} \in \mathcal{K}_{\alpha'}$ for all $t\leq T$.  
Then we take $F=KG$ with $G\in \mathcal{G}_\alpha$ for some $\alpha > \alpha'$. 
Recall that this is possible for any $\alpha'$, see \eqref{NGa}. By (\ref{X59}) it
follows that, see (\ref{X13}),
\begin{gather}
  \label{X60}
  \mu_t (|LF|) = \mu_t (|LKG|) = \mu_t (|K\breve{L}_{\alpha'\alpha}G|) \leq \mu_t (K|\breve{L}_{\alpha'\alpha}G|) \\[.2cm] \nonumber  = \langle \!\langle k_t , |\breve{L}_{\alpha'\alpha}G
  |\rangle\!\rangle = \int_{\Gamma_{\rm fin}} k_t (\eta) |\breve{L}_{\alpha'\alpha}G
 (\eta) | \lambda (d \eta) \\[.2cm]  \nonumber \leq \int_{\Gamma_{\rm fin}} e^{\alpha'|\eta|}|\breve{L}_{\alpha'\alpha}G
 (\eta) | \lambda (d \eta) \leq \|\breve{L}_{\alpha'\alpha} \|
 |G|_\alpha.
\end{gather}
Thus, $|LF|$ is $\mu_t$-absolutely integrable and 
\[
\int_0^T |\mu_t (LF) |dt  \leq \int_0^T \mu_t (|LF|) dt \leq T  \|\breve{L}_{\alpha'\alpha} \|
 |G|_\alpha.
\]
To prove the validity of item (ii) of
Definition \ref{1df}, for $s\geq 0$, we assume the existence of a solution $[0,s]\ni u \mapsto \mu_u$ of \eqref{FPE} with every $F\in \mathcal{F}_{\rm max}$. This map is continuous in the sense that $\mu_{u} \Rightarrow \mu_t$ as $u\to t$, which holds for every $t\in [0,s]$. This weak continuity follows from the very form of \eqref{FPE} and by claims (ii) of Proposition \ref{G1pn} and (b) of Proposition \ref{G2pn}. 
According to Lemma \ref{01lm}, the assumed solution satisfies $\mu_u\in \mathcal{P}_{\rm exp}^{\alpha_s}$ with $\alpha_s \leq \ln (e^{\alpha_0} + \|b\|s)$. 
For $t>0$, by 
(\ref{X55a}) with $\alpha' = \alpha_s$ and some $\alpha > \alpha_s$, and also by (\ref{X52}), for an arbitrary $\alpha'' \in (\alpha_s, \alpha)$, we have
\begin{equation}
  \label{X61}
k_{s+t} = k_s + \int_0^t L^\Delta_{\alpha \alpha''} k_{s+u} du,
\end{equation}
where $k_s= k_{\mu_s}$. 
In view of \eqref{X55a}, $t$ in \eqref{X61} should satisfy 
$t<T( \alpha'', \alpha_s)$. 
By the continuity of $\alpha \mapsto T(\alpha, \alpha')$, see (\ref{X46a}), the latter can be secured by the corresponding choice of $\alpha''$ whenever 
\begin{equation}
 \label{OCTz}
 t < \frac{ \alpha - \alpha_s}{\|b\|e^{-\alpha_s} + \langle a \rangle e^{\alpha}}.
\end{equation}
For such $t$, by Lemma \ref{01lm}, $k_{s+t}$ is the correlation function of a unique $\mu_{t+s}\in \mathcal{P}_{\rm exp}$, the type of which verifies
\begin{equation}
\label{NO}
\varkappa_{\mu_{t+s}} \leq   e^{\alpha_0} + \|b\|(s+t)< e^\alpha.
\end{equation}
The latter inequality holds for $t$ satisfying \eqref{OCTz}, which an elementary calculation can show. 
Hence, $k_{s+u}\in
\mathcal{K}_{\alpha''}$ for all $u\in[0,t]$ and sufficiently small $\alpha - \alpha''$. Now we take $G\in \mathcal{G}_\alpha$, cf.
(\ref{X60}), apply (\ref{X59}) with $\alpha' = \alpha_s$ and obtain from (\ref{X61}) the
following
\begin{gather}
  \label{X62}
 \mu_{s+t} (KG) =  \langle \! \langle k_{s+t} , G\rangle \!
 \rangle = \langle \! \langle k_{s} , G\rangle \!
 \rangle + \langle \! \langle \int_0^t L^\Delta_{\alpha \alpha''} k_{s+u} du , G \rangle \!
 \rangle  \\[.2cm] \nonumber = \langle \! \langle k_{s} , G\rangle \!
 \rangle + \int_0^t \langle \! \langle  k_{s+u} ,
 \breve{L}_{\alpha''\alpha} G\rangle \!
 \rangle du = \mu_s (F)  +
 \int_0^t\mu_{s+u} (L F) du.
\end{gather}
Thus, for each $t$ satisfying \eqref{OCTz}, the map $[s,t]\ni u\mapsto \mu_u$ as in Proposition \ref{X6pn} solves \eqref{FPE} with any $F\in \mathcal{F}_{\rm max}$. Due to the aforementioned continuity of $u \mapsto \mu_u$,   
this implies that the map $[0,s+t]\ni u\mapsto \mu_u$ as in Proposition \ref{X6pn} solves \eqref{FPE} with any $F\in \mathcal{F}_{\rm max}$. Now we show that this map can be continued to all $u>0$. Fix some $\varepsilon \in (0,1)$,  set $s_0=0$ and 
\begin{equation}
 \label{No}
 s_1 = \frac{\varepsilon}{\|b\|e^{-\alpha_0}+ \langle a \rangle e^{\alpha_0 +1}},
\end{equation}
which verifies $\alpha_{s_1} < \alpha_0 +1$, see \eqref{NO}. 
As we have just shown, use \eqref{OCTz} with $s=s_0$ and $\alpha =1$, the map $[0,s_1]\ni u\mapsto \mu_u$ as in Proposition \ref{X6pn} solves \eqref{FPE} with any $F\in \mathcal{F}_{\rm max}$. Next, we set
\begin{equation}
 \label{No1}
 s_2 = \frac{\varepsilon}{\|b\|e^{-\alpha_0}+ \langle a \rangle e^{\alpha_{s_1} +1}} = \frac{\varepsilon}{\|b\|e^{-\alpha_0}+ \langle a \rangle e^{\alpha_0 +1} + \langle a \rangle  \|b\| s_1 }.
\end{equation}
By repeating the same arguments, we conclude that the map $[0,s_1+s_2]\ni u\mapsto \mu_u$ as in Proposition \ref{X6pn} solves \eqref{FPE} with any $F\in \mathcal{F}_{\rm max}$. After $n$ repetitions, we set
\begin{equation}
 \label{No1a}
 s_{n+1}=  \frac{\varepsilon}{\|b\|e^{-\alpha_0}+ \langle a \rangle e^{\alpha_0 +1} + \langle a \rangle  \|b\| (s_1 + \cdots + s_n)},
\end{equation}
and conclude that the map $[0,s_1+s_2 + \cdots + s_{n+1} ]\ni u\mapsto \mu_u$ as in Proposition \ref{X6pn} solves \eqref{FPE} with any $F\in \mathcal{F}_{\rm max}$. This procedure can be repeated ad infinitum. Assume now that $\sum_{n=1}^\infty=S < \infty$. Then ${\rm RHS} (\eqref{No1}) \geq A(s)$, which by \eqref{No1} contradicts the convergence of the series. Thus, the aforementioned solution can be continued to all $u$.

Now we turn to proving uniqueness with the help of the following arguments.
First, we write \eqref{FPE} for $\mu'_t$
\begin{equation}
 \label{FPEz}
 \mu'_{t} (F) = \mu_{0} (F) + \int_{0}^{t} \mu'_s (LF) ds, 
\end{equation}
which must also be satisfied by $\mu_t$ mentioned in Proposition \ref{X6pn}, with the same $\mu_0\in \mathcal{P}_{\rm exp}$. By Lemma \ref{01lm}, $\mu'_t$ lies in $\mathcal{P}_{\rm exp}$ and its correlation function satisfies, cf. \eqref{0qa} and \eqref{Y1}, \eqref{Y2}, 
\begin{gather}
 \label{0A}
 0\leq k_{\mu'_{t}} (\eta) \leq  \left(\varkappa_{\mu_0} + \|b\| t \right)^{|\eta|}, \qquad , \\[.2cm] \nonumber
k_{\mu'_t} \in \mathcal{K}_{\alpha_t}, \qquad {\rm for} \ \ \  \alpha_t := \ln (\varkappa_{\mu_0} + \|b\| t),
 \end{gather}
that must be true for all $t\geq 0$. By \eqref{FPEz} and Remark \ref{01rk} we then get that $k_{\mu'_{t}}$ satisfies, cf. \eqref{X59}, 
\begin{eqnarray}
 \label{0B}
 \langle \! \langle k_{\mu'_{t}} , G^\vartheta \rangle \!
 \rangle & = & \langle \! \langle k_{\mu_{0}} , G^\vartheta \rangle \!
 \rangle + \int_{0}^{t} \langle \! \langle k_{\mu'_{s}} , \breve{L}_{\alpha' \alpha} G^\vartheta \rangle \!
 \rangle ds \\[.2cm] \nonumber & = & \langle \! \langle k_{\mu_{0}} , G^\vartheta \rangle \!
 \rangle + \langle \! \langle L^\Delta_{\alpha \alpha'}  \int_{0}^{t}  k_{\mu'_{s}} ds ,  G^\vartheta \rangle \!
 \rangle ,
\end{eqnarray}
holding for all $\vartheta \subset \Theta$. Here $G^\vartheta$ is such that $F^\vartheta= K G^\vartheta$, i.e., it is given in \eqref{X7}. The integral $\int_{0}^{t}  k_{\mu'_{s}} ds$ is considered in the Banach space $\mathcal{K}_{\alpha'}$, $\alpha'\geq \alpha_{t}$, see \eqref{0A}, whereas $\alpha>\alpha'$ can be arbitrary since $G\in \mathcal{G}_{\alpha}$ for any $\alpha$. To interchange the integrations in \eqref{0B} we used the absolute integrability as in \eqref{X60}, cf. \eqref{X62}. 

The set $\{G^\vartheta: \vartheta \in \Theta\}$ is separating for the $\sigma$-finite positive measures on $\Gamma_{\rm fin}$, including $\lambda$, as it is closed under multiplication and separates points. Assume that an absolutely $\lambda$-integrable function $H:\Gamma_{\rm fin} \to \mathds{R}$ is such that $\int_{\Gamma_{\rm fin}} H G^\vartheta d \lambda =0$ for all $\vartheta \subset \Theta$. Then $H(\eta)=0$ for $\lambda$-almost all $\eta$. Indeed, write $H = H^{+} - H^{-}$, $H^{\pm} \geq 0$, and define $\lambda^{\pm} = H^{\pm }\lambda$. Then the assumed equality implies $\lambda^{+} = \lambda^{-}$ and hence $H^{+}(\eta) = H^{-}(\eta)$, which holds for $\lambda$-almost all $\eta$.
We use this argument in \eqref{0B} and thus get 
\begin{equation}
 \label{0C}
 k_{\mu'_{t}} = k_{\mu_{0} }+ L^\Delta_{\alpha \alpha'}  \int_{0}^{t}  k_{\mu'_{s}} ds,
\end{equation}
which is the equality of vectors in $\mathcal{K}_{\alpha}$.
Thus, $k_{\mu'_{t} }$ is a mild solution of the Cauchy problem in \eqref{X42}. To prove that this fact implies 
$k_{\mu'_{t} } = k_{\mu_{t} }$ (hence $\mu'_{t}  = \mu_{t}$), we adapt standard arguments by which uniqueness of mild solutions is proved when one deals with $C_0$-semigroups, see \cite[Proposition 6.4, page 146]{EN}.   
Fix $t>0$ and choose $\alpha_0$ according to $e^{\alpha_0} = \varkappa_{\mu_0}$, see Lemma \ref{01lm}. For $u< t$, define 
$q_u = k_{\mu_{u}} - k_{\mu'_{u}}$. Since $k_{\mu_t}$ also satisfies \eqref{0A}, then both $q_u$ and $\int_0^u q_s ds$ are in $\mathcal{K}_{\alpha_u}\subset \mathcal{K}_{\alpha_t}$. 
By \eqref{0C} and \eqref{Qqz} it follows that
\begin{equation}
 \label{0Ca}
 q_u = L^\Delta_{\alpha \alpha_t}  \int_{0}^{u} q_s ds,
\end{equation}
that holds in $\mathcal{K}_{\alpha}$ with every  $\alpha>\alpha_t$, see \eqref{Y2}.  
The latter means that we consider each $q_u \in \mathcal{K}_{\alpha_u}$ as an element of $\mathcal{K}_{\alpha_t}$ or $\mathcal{K}_{\alpha}$. In this sense, \eqref{0Ca} can be written as
\begin{equation}
\label{0CAa}
L^\Delta_{\alpha \alpha_t}  \int_{0}^{u} q_s ds = Q_{\alpha\alpha_u} (0) q_u= Q_{\alpha\alpha_t} (0) q_u,
\end{equation}
see \eqref{X53}. Now, for a fixed $\alpha>\alpha_t$,  we take $v\in (0, t]$ such that 
\begin{equation}
 \label{0Cz}
 v< T(\alpha, \alpha_t) = \frac{\alpha -  \alpha_t}{\|b\|e^{-\alpha_t} + \langle a \rangle e^{\alpha}},
\end{equation}
cf. \eqref{X46a}, which is possible for each $t>0$. Then for every $u\in [0,v]$, $Q_{\alpha \alpha_t} (v-u)$ makes sense and has the properties stated in Proposition \ref{X4pn}. Thus, we may consider, cf. \eqref{X52},
\begin{eqnarray}
 \label{0Cb}
& & \frac{d}{du} Q_{\alpha \alpha_t} (v-u)\int_0^u q_s ds  =  Q_{\alpha \alpha_t} (v-u) q_u - Q_{\alpha \alpha'} (v-u)L^{\Delta}_{\alpha' \alpha_t} \int_0^u q_s ds \qquad \\[.2cm]& & \quad =  Q_{\alpha \alpha'} (v-u)
\left[Q_{\alpha' \alpha_t} (0) q_u - L^{\Delta}_{\alpha' \alpha_t} \int_0^u q_s ds \right] =0 \nonumber
\end{eqnarray}
see \eqref{0CAa}. At the same time, integration over $u\in [0,v]$ of both sides of \eqref{0Cb} yields
\begin{equation}
\label{0Cf}
Q_{\alpha_u \alpha'} (0)\int_0^v q_s ds = 0, \quad {\rm hence} \quad \int_0^v q_s ds = 0,  
\end{equation}
which holds for all $v\in (0,t]$ satisfying \eqref{0Cz}. Then $q_v=0$ for all such $v$. Now we recall that $e^{\alpha_t} = e^{\alpha_0 }+\|b\|t$ and that  we have the freedom to choose $\alpha>\alpha_t$, which we use by setting $\alpha= \alpha_t + 1$, and also
\begin{equation}
\label{0Cd}
\tau (t) = (\|b\|e^{\alpha_0} + \langle a \rangle e^{\alpha_0+1} + \langle a \rangle \|b\| t)^{-1}.
\end{equation}
Note that $T(\alpha_t + 1, \alpha_t) < \tau(t)$, see \eqref{0Cz}. Let $t_1>0$ be the unique solution 
of $t=\tau(t)$. By  \eqref{0Ca} the equality in \eqref{0Cf} yields $q_v=0$, which implies $k_{\mu_{v}} = k_{\mu'_{v}}$ for all $v\leq t_1$. To further extend this equality, we rewrite \eqref{0C} in the form
\[
k_{\mu'_{t_1 + t}} = k_{\mu_{t_1}} + L^\Delta_{\alpha \alpha_{t_1+t}} \int_0^tk_{\mu'_{t_1 + s}} ds,
\]
and introduce $q_u = k_{\mu'_{t_1 + u}} - k_{\mu_{t_1 + u}}$, for which the following holds, cf. \eqref{0Ca},
\[
q_u = L^\Delta_{\alpha \alpha_{t_1+t}} \int_0^t q_s ds, \qquad u\leq t.
\]
Now we repeat the arguments as in \eqref{0Cb} with $\alpha = \alpha_{t_1 + t} +1$, subject to condition 
$t\leq t_2$, where $t_2$ is the unique solution of $t= \tau (t_1+t)$, see \eqref{0Cd}. The result is 
$k_{\mu_t} = k_{\mu'_t}$, which is true for all $t\leq t_1 +t_2$. 
Repeating this procedure the appropriate number of times, we obtain $k_{\mu_t} = k_{\mu'_t}$, for all $t\leq t_1 +t_2 + \cdots + t_n$, where $t_n$ verifies, cf.  \eqref{0Cd},
\begin{equation}
\label{0Cg}
t_n = \bigg{(}\|b\|e^{-\alpha_0} + \langle a \rangle  e^{\alpha_0+1} +  \langle a \rangle \|b\|(t_1 + \cdots + t_n)\bigg{)}^{-1}.
\end{equation}
If the series $\sum t_k$ converges, then the left-hand side of \eqref{0Cg} tends to zero as $n\to +\infty$, while the right-hand side remains separated away from zero. Hence, $\sum t_k=+\infty$, which results in $k_{\mu_t} = k_{\mu'_t}$, being valid for all $t>0$. This completes the proof of the theorem.

The proof of the bound in \eqref{Y0} readily follows by \eqref{X55}, while the bound in \eqref{NG1} was proved in \cite[Theorem 2.5]{KK}.

\hfill $\square$

\section{Constructing the Markov Process: Auxiliary Models}

This section aims to prepare the proof of Theorem \ref{2tm}.
As mentioned above, the proof will be done by approximating the initial model described by $L$ given in \eqref{L} by a family of models described by their Kolmogorov operators  $\{L_\sigma : \sigma \in [0,1]\}$ such that $L_0$ coincides with $L$ given in \eqref{L} whereas each $L_\sigma$, $\sigma \in (0,1]$ can be used to construct a Markov transition function, $p^\sigma_t$, by means of which one defines a Markov process, corresponding to $L_\sigma$. Then the process in question is obtained as the weak limit of the Markov processes obtained in this way.    

\subsection{The models}

We begin by recalling that each $\mu\in \mathcal{P}_{\rm exp}$ has the property $\mu(\Gamma_*)=1$, see \eqref{Gas}, \eqref{Gas1}. Keeping this in mind, we set, cf. \eqref{psi1},
\begin{equation}
 \label{z2}
 \psi_\sigma (x) = \frac{1}{1+\sigma |x|^{d+1}}, \qquad \ \sigma \in [0,1]. 
\end{equation}
and 
\begin{gather}
 \label{z3}
 b_\sigma (x) = b(x) \psi_\sigma (x) , \qquad m_\sigma (x) = m(x) \psi_\sigma (x) , \\[.2cm] \nonumber  a_\sigma (x,y) = a(x-y) \psi_\sigma (x) \psi_\sigma (y). 
\end{gather}
Clearly, 
\begin{equation}
 \label{z3a}
 \psi (x) \leq \psi_\sigma (x) \leq \sigma^{-1} \psi (x), \qquad \sigma \in (0,1],
\end{equation}
which yields, cf. \eqref{Gas},
\begin{equation}
 \label{z3b}
 \gamma(\psi) = \varPhi(\gamma) \leq   \varPhi_\sigma (\gamma) \leq \sigma^{-1} \varPhi(\gamma), \quad \Phi_\sigma (\gamma) := \gamma(\psi_\sigma). 
\end{equation}
Then we define
\begin{equation}
 \label{Ls}
L_\sigma F (\gamma) = \int_X b_\sigma(x) \nabla_x F (\gamma) dx - \sum_{x\in \gamma} \left( m_\sigma (x) + \sum_{y\in \gamma\setminus x} a_\sigma (x,y)  \right) \nabla_x F(\gamma \setminus x),    
\end{equation}
where $\nabla_x F (\gamma)$ is as in \eqref{X19}.
Clearly, $L_0$ coincides with the generator defined in \eqref{L}. By \eqref{6} and  \eqref{z3b} we have that
\begin{equation*}
 \sum_{x\in \gamma} m_\sigma (x) \leq \|m\| \varPhi_\sigma (\gamma) \leq \|m\| \sigma^{-1} \varPhi (\gamma),
\end{equation*}
and also, see \eqref{z3} and \eqref{6},
\begin{equation}
 \label{z5}
 \sum_{x\in \gamma} \sum_{y\in \gamma \setminus x}  a_\sigma (x,y)  \leq  \|a\|_\psi \sum_{x\in \gamma} \sum_{y\in \gamma \setminus x} \psi_\sigma (x)\psi_\sigma (y), \qquad \gamma \in \Gamma_*.
\end{equation}
Thereby we set, cf. \eqref{X37}, \eqref{X38}, 
\begin{eqnarray}
 \label{f}
 E_\sigma (\eta) & = & \sum_{x\in \eta} m_\sigma (x) + \sum_{x\in \eta}\sum_{y\in \eta\setminus x} a_\sigma (x,y), \qquad \eta \in \Gamma_{\rm fin}, \\[.2cm] \nonumber (L^\Delta_\sigma k)(\eta) & = &  \sum_{x\in \eta} b_\sigma (x) k(\eta\setminus x) -   E_\sigma (\eta) k(\eta) - \int_X \left(\sum_{y\in \eta} a_\sigma (x,y) \right) k(\eta \cup x) d x.
\end{eqnarray}
Similarly as in \eqref{X38a}, we then have
\begin{equation*}
 \mu(L_\sigma K G) = \langle\! \langle L^\Delta_\sigma k_\mu, G \rangle\! \rangle, \qquad \sigma \in (0,1], 
\end{equation*}
which holds for all $\mu\in \mathcal{P}_{\rm exp}$ and $G\in B_{\rm bs}$. In view of \eqref{z3}, $L^\Delta_\sigma$ can be used to define bounded linear operators $(L^\Delta_\sigma)_{\alpha\alpha'}: \mathcal{K}_{\alpha'} \to \mathcal{K}_\alpha$ the norms of which satisfy 
\begin{equation*}
 \| (L^\Delta_\sigma)_{\alpha\alpha'}\| \leq {\rm RHS(\ref{X40})},
\end{equation*}
Hence, the Cauchy problem in $\mathcal{K}_\alpha$ for $(L^\Delta_\sigma, \mathcal{D}_\alpha)$, with the same domain as in \eqref{X42},
\begin{equation}
 \label{f3}
 \frac{d}{dt} k^\sigma_t = L^\Delta_\sigma k^\sigma_t, \qquad k^\sigma_t|_{t=0} = k_0 \in \mathcal{K}_\alpha, 
\end{equation}
has a unique solution, see Proposition \ref{X5pn}, given by the formula
\begin{equation}
 \label{f4}
 k^\sigma_t = Q^\sigma_{\alpha\alpha'}(t)k_0, \qquad t< T(\alpha,\alpha'),
\end{equation}
with $T(\alpha,\alpha')$ given in \eqref{X46a} and the family of operators $\{Q^\sigma_{\alpha\alpha'}(t): t\in [0,T(\alpha,\alpha')\}$ possessing all the properties established in Proposition \ref{X4pn}. 
\begin{remark}
 \label{g1rk}
Similarly as in Proposition \ref{X6pn} the evolution $t\mapsto k^\sigma_t$ described by \eqref{f4} determines the evolution of states  $t \mapsto \mu^\sigma_t\in \mathcal{P}_{\rm exp}$, $t>0$, the type of  which satisfies, cf. \eqref{0qa}, 
\begin{equation}
 \label{f4a}
\varkappa_{\mu^\sigma_t} \leq  e^{\alpha_t} :=\varkappa_{\mu_0} + \|b\|t. 
\end{equation}
These states $\mu^\sigma_t$ solve the Fokker-Planck equation for $(L_\sigma, \mathcal{F}, \mu_0)$ with the same domain given in \eqref{X18}.  Similarly as in Proposition \ref{X6pn} this solution is unique.
\end{remark}

\subsection{The Markov transition functions}

The above-mentioned transition functions $p^\sigma_t$ will be obtained in the form
\begin{equation}
 \label{f20}
 p^\sigma_t ( \gamma, \cdot) = S^\sigma (t) \delta_\gamma, \qquad t\geq 0, \quad \sigma \in (0,1],
\end{equation}
where $\delta_\gamma$ is the Dirac measure on $\Gamma_*$ centered at $\gamma\in  \Gamma_*$ and 
$S^\sigma (t)$ is a bounded positive operator acting in the Banach space of finite signed measures on  $\Gamma_*$, such that $S^\sigma =\{ S^\sigma (t)\}_{t\geq 0}$ is a stochastic semigroup related to $L_\sigma$ given in \eqref{Ls}. This semigroup will be constructed (see Lemma \ref{7lm} below) by means of the Thieme-Voigt technique developed in \cite{TV} which proved effective in problems like the one considered here. Its detailed presentation in the form adapted to the present context can be found in \cite[Sect. 7]{KR}. Here we just briefly outline the main aspects.   

\subsubsection{The Thieme-Voigt theory}

Let $\mathcal{X}$ be an ordered real Banach space with a generating cone $\mathcal{X}^{+}$ such that the norm of $\mathcal{X}$ is additive on the cone, i.e., $\|x+y\|_{\mathcal{X}} = \|x\|_{\mathcal{X}} + \|y\|_{\mathcal{X}}$, whenever $x,y \in \mathcal{X}^{+}$. By the latter fact there exists a linear positive functional, $\varphi_\mathcal{X}$, such that 
\begin{equation}
 \label{g21}
 \varphi_\mathcal{X} (x) = \|x\|_\mathcal{X}, \qquad {\rm for} \quad x \in \mathcal{X}^{+}. 
\end{equation}
A $C_0$-semigroup $S =\{ S (t)\}_{t\geq 0}$ of bounded linear operators on $\mathcal{X}$ is said to be \emph{stochastic} (resp. \emph{substochastic}) if the following holds $\| S(t) x\|_\mathcal{X} = \|x\|_\mathcal{X}$ (resp. $\| S(t) x\|_\mathcal{X} \leq \|x\|_\mathcal{X}$) for all $x\in \mathcal{X}^{+}$ and $t>0$.   
For a dense linear subset $\mathcal{D}\subset \mathcal{X}$, set $\mathcal{D}^{+} = \mathcal{D}\cap \mathcal{X}^{+}$ and assume that $(A, \mathcal{D})$ and $(B, \mathcal{D})$ are linear operators on $\mathcal{X}$. The Thieme-Voigt theory gives sufficient conditions on this pair of operators under which the closure of 
$(A+B, \mathcal{D})$ is the generator of a stochastic semigroup. Its key aspect is the use of a subspace $\widetilde{\mathcal{X}} \subset \mathcal{X}$ with a specific set of properties listed below. 
\begin{assumption}
 \label{gass1}
The linear subspace $\widetilde{\mathcal{X}} \subset \mathcal{X}$  has the following properties:
\begin{itemize}
 \item [(a)] $\widetilde{\mathcal{X}}$ is dense in  $\mathcal{X}$;
 \item [(b)] there exists a norm, $\|\cdot \|_{\widetilde{\mathcal{X}}}$, that makes $\widetilde{\mathcal{X}}$ a Banach space;
 \item [(c)] $\widetilde{\mathcal{X}}^{+} := \widetilde{\mathcal{X}}
 \cap \mathcal{X}^{+}$ is the generating cone in $\widetilde{\mathcal{X}}$, the norm $\|\cdot \|_{\widetilde{\mathcal{X}}}$ is additive on $\widetilde{\mathcal{X}}^{+}$; 
 \item [(d)] the cone  $\widetilde{\mathcal{X}}^{+}$ is dense in $\mathcal{X}^{+}$. 
\end{itemize}
\end{assumption}
By item (c) in Assumption \ref{gass1} there exists a linear positive functional, $\varphi_{\widetilde{\mathcal{X}}}$, cf \eqref{g21}, such that 
\begin{equation}
 \label{g21a}
 \varphi_{\widetilde{\mathcal{X}}} (x) = \|x\|_{\widetilde{\mathcal{X}}}, \qquad {\rm for} \quad x \in \widetilde{\mathcal{X}}^{+}. 
\end{equation}
For a dense linear subset $\mathcal{D}\subset \mathcal{X}$, let $(A,\mathcal{D})$ be a linear operator on $\mathcal{X}$. Define 
$\widetilde{\mathcal{D}} =\{x\in \mathcal{D} \cap \widetilde{\mathcal{X}}: Ax \in \widetilde{\mathcal{X}}  \}$
Then the operator $(A,\widetilde{\mathcal{D}})$ is said to be the \emph{trace} of $(A,\mathcal{D})$ in  
$\widetilde{\mathcal{X}}$. The next statement is an adaptation of \cite[Theorem 2.7]{TV}, see also \cite[Proposition 7.2]{KR}. 
\begin{proposition}
 \label{g1pn}
 Let $(A,\mathcal{D})$ and $(B,\mathcal{D})$ be linear operators on $\mathcal{X}$ which have the following properties
 \begin{itemize}
  \item [(i)] $-A: {\mathcal{D}}^{+} \to {\mathcal{X}}^{+}$ and $B: {\mathcal{D}}^{+} \to {\mathcal{X}}^{+}$; 
  \item [(ii)] $(A,\mathcal{D})$ is the generator of a substochastic semigroup, $S_0=\{S_0(t)\}_{t\geq 0}$, on $\mathcal{X}$ such that $S_0(t) : \widetilde{\mathcal{X}} \to \widetilde{\mathcal{X}}$, holding for all $t\geq 0$, and the restrictions  $S_0(t)|_{\widetilde{\mathcal{X}}}$, $t\geq 0$, constitute a $C_0$-semigroup on $\widetilde{\mathcal{X}}$ generated by $(A,\widetilde{\mathcal{D}})$;
  \item [(iii)] $B: \widetilde{\mathcal{D}} \to \widetilde{\mathcal{X}}$ and 
 \begin{equation*}
  \varphi_{{\mathcal{X}}} ((A+B)x) = 0, \qquad {\rm  for} \ \ {\rm all} 
 \  \ x \in \mathcal{D}^{+};
 \end{equation*}
 \item[(iv)] there exist positive $c$ and $\epsilon$ such that
 \begin{equation*}
  \varphi_{\widetilde{\mathcal{X}}} ((A+B)x) \leq c \varphi_{\widetilde{\mathcal{X}}} (x) - \epsilon \|Ax\|_{\mathcal{X}}, \qquad {\rm  for} \ \ {\rm all} 
 \  \ x \in \widetilde{\mathcal{D}} \cap \mathcal{X}^{+}.
 \end{equation*}
 \end{itemize}
Then the closure of $(A+B,\mathcal{D})$ in $\mathcal{X}$ is the generator of a stochastic semigroup, $S=\{S(t)\}_{t\geq 0}$, on $\mathcal{X}$ which leaves $\widetilde{\mathcal{X}}$ invariant.   
\end{proposition}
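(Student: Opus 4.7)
The plan is to present this as a direct instantiation of the Thieme--Voigt perturbation theorem [TV, Theorem 2.7], matching our four hypotheses (i)--(iv) with its conditions. The skeleton is Kato's construction of the minimal positive semigroup associated with a positive perturbation of a substochastic generator. Concretely, starting from the semigroup $S_0$ provided by (ii), one forms the Dyson--Phillips series
\[
 S(t)x \;=\; \sum_{n=0}^{\infty} S_n(t)x, \qquad S_{n+1}(t)x \;=\; \int_{0}^{t} S_0(t-s)\,B\,S_n(s)x\,ds,
\]
initially on $\mathcal{D}^{+}$. The positivity in (i) ensures each $S_n(t)$ is positive, and Kato's resolvent argument shows that the sum defines a substochastic $C_0$-semigroup $S$ on $\mathcal{X}$ whose generator is an extension of $(A+B,\mathcal{D})$. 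The equality (rather than inequality) in (iii) then upgrades $S$ from substochastic to stochastic: for $x\in\mathcal{D}^{+}$ one has $\frac{d}{dt}\varphi_{\mathcal{X}}(S(t)x)=\varphi_{\mathcal{X}}((A+B)S(t)x)=0$, and this conservation forces the minimal extension to coincide with the closure of $(A+B,\mathcal{D})$, for any strictly larger generator would violate the norm preservation on some positive element.

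For the invariance of $\widetilde{\mathcal{X}}$, I would proceed term-by-term in the series: $S_0(t)\widetilde{\mathcal{X}}\subset\widetilde{\mathcal{X}}$ by (ii) and $B\widetilde{\mathcal{D}}\subset\widetilde{\mathcal{X}}$ by (iii), so inductively each $S_n(t)$ preserves $\widetilde{\mathcal{X}}\cap\mathcal{D}$. The estimate (iv) combined with the identity $\varphi_{\widetilde{\mathcal{X}}}(x)=\|x\|_{\widetilde{\mathcal{X}}}$ on $\widetilde{\mathcal{X}}^{+}$, via a Gronwall-type argument on $t\mapsto\varphi_{\widetilde{\mathcal{X}}}(S(t)x)$, yields
\[
 \|S(t)x\|_{\widetilde{\mathcal{X}}} \;\leq\; e^{c t}\,\|x\|_{\widetilde{\mathcal{X}}}, \qquad x \in \widetilde{\mathcal{D}}\cap\mathcal{X}^{+},
\]
which extends to all of $\widetilde{\mathcal{X}}^{+}$ and then to $\widetilde{\mathcal{X}}$ by the density (a) and the generating property (c). Strong continuity of the restriction $S(t)|_{\widetilde{\mathcal{X}}}$ follows from the same estimate together with the $C_0$-property of $S_0$ on $\widetilde{\mathcal{X}}$.

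The step I expect to be the main obstacle is the way condition (iv) enters, specifically the role of the dissipation term $-\epsilon\|Ax\|_{\mathcal{X}}$. A bare pointwise bound $\varphi_{\widetilde{\mathcal{X}}}((A+B)x)\leq c\,\varphi_{\widetilde{\mathcal{X}}}(x)$ would not suffice to close the iteration, because controlling $B S_n(s)x$ in the smaller norm $\|\cdot\|_{\widetilde{\mathcal{X}}}$ requires some independent hold on the $\mathcal{X}$-norm of $A$ applied to iterates. The subtracted $-\epsilon\|Ax\|_{\mathcal{X}}$ is what absorbs precisely this contribution, producing a uniform bound on the partial sums of the Dyson--Phillips series in $\|\cdot\|_{\widetilde{\mathcal{X}}}$ on compact time intervals and so permitting the passage to the limit inside $\widetilde{\mathcal{X}}$. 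Once this uniform control is obtained, identifying the generator of $S|_{\widetilde{\mathcal{X}}}$ with $(A+B,\widetilde{\mathcal{D}})$ follows by the standard core argument of [TV].
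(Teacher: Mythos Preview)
The paper does not prove this proposition. It is stated explicitly as ``an adaptation of \cite[Theorem 2.7]{TV}, see also \cite[Proposition 7.2]{KR}'' and is quoted without proof, serving purely as a black-box tool that is then applied in Lemma~\ref{7lm}. So there is nothing in the paper to compare your argument against at the level of proof strategy.

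That said, your sketch is a faithful outline of how the Thieme--Voigt result is actually established: Kato's minimal semigroup via the Dyson--Phillips iteration, positivity from (i), conservativity from the equality in (iii), and the crucial role of the $-\epsilon\|Ax\|_{\mathcal{X}}$ term in (iv) for controlling the iterates in the $\widetilde{\mathcal{X}}$-norm. Your identification of this last point as the main technical obstacle is correct and matches the structure of the original Thieme--Voigt argument. If your intent was to reproduce the paper's treatment, you have overshot: a one-line citation is all that is expected here. If your intent was to unpack the cited theorem, your outline is sound, though a complete proof would require the detailed resolvent and core arguments from \cite{TV} that you only gesture at.
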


\subsubsection{The Banach spaces of measures}

Now we turn to constructing the semigroups $S^\sigma$ that appear in \eqref{f20}. Let $\mathcal{M}$ stand for the set of all finite signed measures on $\Gamma_*$, see \cite[Chapt. 12]{Cohn}. Set $\mathcal{M}^{+} = \{\mu \in \mathcal{M}: \mu (\mathbb{A}) \geq 0, \ \mathbb{A}\in \mathcal{B} (\Gamma_*)\}$. Each $\mu\in \mathcal{M}$ can be uniquely decomposed $\mu= \mu^{+} - \mu^{-}$ with $\mu^{\pm}\in \mathcal{M}^{+}$, which means that the latter is the generating cone in $\mathcal{M}$. Set $|\mu| =  \mu^{+} + \mu^{-}$. Then 
\begin{equation*}
 \|\mu \| := |\mu| (\Gamma_*)
\end{equation*}
is a norm, that is additive on  $\mathcal{M}^{+}$. With this norm $\mathcal{M}$ is a Banach space, see \cite[Proposition 4.1.8, page 119]{Cohn}. For $n\in \mathds{N}$, let now $F_n$ stand for $F^\vartheta$, $\vartheta = \{\psi , \dots , \psi\}$, $|\vartheta|=n$, see \eqref{0n}. Also set $F_0\equiv 1$. Since $\psi(x) \leq 1$, see \eqref{z2}, these functions satisfy
\begin{gather}
 \label{z33a}
 F_n (\gamma\cup x) = F_n (\gamma) + n \psi(x) F_{n-1} (\gamma), \\[.2cm] \nonumber
 F_1 (\gamma) F_n (\gamma) \leq F_{n+1}(\gamma) + n F_n (\gamma), \\[.2cm] \nonumber F_2 (\gamma) F_n (\gamma) \leq F_{n+2}(\gamma) + 2n F_{n+1} (\gamma) + n(n-1) F_n (\gamma).
\end{gather}
Define
\begin{equation}
 \label{z35}
 \|\mu \|_n = \sum_{k=0}^n \frac{1}{k!}|\mu|(F_k), \qquad \mathcal{M}_n = \{\mu \in \mathcal{M}: \|\mu \|_n< \infty\}, \quad n\in \mathds{N}.
\end{equation}
Clearly, each $\mathcal{M}_n$ is a Banach space. Let $\mathcal{M}^{+}_n$ denote the corresponding cones of positive measures. Then 
\begin{equation*}
 \mathcal{M}_{n+1} \subset  \mathcal{M}_{n} \subset \mathcal{M}, \qquad 
 \mathcal{M}_{n+1}^{+} \subset  \mathcal{M}^{+}_{n} \subset \mathcal{M}^{+}, \ \ n\in \mathds{N},
\end{equation*}
where all inclusions are dense in the corresponding topologies, cf. \cite[Lemma 7.4]{KR}. Let $\mathcal{M}^{1,+}$ consist of all $\mu\in \mathcal{M}^{+}$ for which $\|\mu\|=\mu(\Gamma_*)=1$, i.e, of all probability measures. Then, for each $n\in \mathds{N}$, it follows that
\begin{equation}
 \label{g25}
 \mathcal{P}_{\rm exp} \subset \mathcal{M}_{n} \cap \mathcal{M}^{1,+}.
\end{equation}
Indeed, by \eqref{0na} for $\mu\in \mathcal{P}_{\rm exp}$ one gets
\[
 \mu(F_k) \leq [\varkappa_\mu \langle \psi \rangle]^k, \qquad k \in \mathds{N}.
\]
We conclude this part by noting that, for each $n\in \mathds{N}$, the Banach spaces $\mathcal{M}$ and $\mathcal{M}_n$ satisfy all the conditions of Assumption \ref{gass1}, whereas the linear functionals as in \eqref{g21} and \eqref{g21a} are 
\begin{equation}
 \label{g26}
 \varphi(\mu) := \varphi_{\mathcal{M}} (\mu) = \mu(\Gamma_*), \qquad \varphi_n(\mu) := \varphi_{\mathcal{M}_n} (\mu) = \sum_{k=1}^n \frac{1}{k!}\mu(F_k), 
\end{equation}
respectively. 

\subsubsection{The semigroup}

For $L_\sigma$ introduced in \eqref{Ls}, we define $L^\dagger_\sigma$ by the formula
\begin{equation}
 \label{z29}
(L^\dagger_\sigma \mu)(F) = \mu(L_\sigma F),   
\end{equation}
where $F$ is a suitable function, e.g., $F = KG$ with $G\in B_{\rm bs}$. To this end, we first define the following measure kernel
\begin{equation}
 \label{z30}
 \Omega^\gamma_\sigma (\mathbb{A}) = \int_X b_{\sigma} (x) \mathds{1}_{\mathbb{A}} (\gamma \cup x) dx + \sum_{x\in \gamma} \left(m_\sigma (x) + \sum_{y\in \gamma\setminus x} a_{\sigma} (x,y)\right) \mathds{1}_{\mathbb{A}} (\gamma \setminus x),
 \end{equation}
and the function
\begin{equation}
 \label{z31}
 R_\sigma (\gamma) = \Omega^\gamma_\sigma (\Gamma_*) = \langle b_\sigma \rangle + E_\sigma (\gamma) =: \int_X b_{\sigma} (x) dx + \sum_{x\in \gamma} \left(m_\sigma (x) + \sum_{y\in \gamma\setminus x} a_{\sigma} (x,y)\right).
\end{equation}
By \eqref{z3b} and \eqref{z5} one obtains that $R_\sigma (\gamma)< \infty$ for all $\gamma\in \Gamma_*$.
Then $L^\dagger_\sigma$ can be presented in the form
\begin{gather}
 \label{z32}
 L^\dagger_\sigma = A + B, \\[.2cm] \nonumber (A\mu) (d \gamma) = - R_\sigma (\gamma) \mu( d \gamma), \qquad (B\mu) (d \gamma) = \int_{\Gamma_*}  \Omega^{\gamma'}_\sigma (d \gamma) \mu ( d\gamma').
\end{gather}
Let us show that both $-A$ and $B$ introduced in  \eqref{z32} define positive (unbounded) operators in each of $\mathcal{M}_{n}$.  Set
\begin{equation}
 \label{z36a}
 {\rm Dom}_n (A) := \{\mu \in \mathcal{M}:  R_\sigma|\mu|  \in \mathcal{M}_n\}, \ \  n\in \mathds{N}_0,
\end{equation}
where $\mathcal{M}_0$ is just $\mathcal{M}$.
By \eqref{z3a} and \eqref{z31} it follows that
\begin{equation}
 \label{z36}
 R_\sigma (\gamma) \leq \langle b_\sigma \rangle + \frac{\|m\|}{\sigma} F_1(\gamma) + \frac{\|a\|_\psi}{\sigma^2}F_2 (\gamma). 
\end{equation}
Then by \eqref{z33a} we get
\begin{gather}
 \label{z36q}
 R_\sigma (\gamma) F_k (\gamma) \leq  \langle b_\sigma \rangle F_k(\gamma) + \frac{\|m\|}{\sigma}k F_k(\gamma) + \frac{\|m\|}{\sigma} F_{k+1}(\gamma)\\[.2cm] \nonumber + \frac{\|a\|_\psi}{\sigma^2}k (k-1)F_k (\gamma) + \frac{2\|a\|_\psi}{\sigma^2}k F_{k+1} (\gamma)+ \frac{\|a\|_\psi}{\sigma^2}F_{k+2} (\gamma). 
\end{gather}
We apply this estimate in \eqref{z35} and get
\begin{gather*}
\|A\mu\|_n \leq  \langle b_\sigma \rangle \|\mu\|_n + \frac{2 (n+1) \|m\|}{\sigma} \|\mu\|_{n+1} + \frac{4 (n+1)(n+2)\|a\|_\psi}{\sigma^2}\|\mu\|_{n+2}.
\end{gather*} 
By \eqref{z36a} this yields 
\begin{equation}
 \label{z37}
   \mathcal{M}_{n+2} \subset {\rm Dom}_n (A), \ \ n \in \mathds{N}_0. 
\end{equation}
\begin{remark}
 \label{z1rk}
For each $n\geq 1$, the operator $(A, {\rm Dom}_n(A))$ is the trace of $(A, {\rm Dom}_0 (A))$ in the Banach space $\mathcal{M}_n$.
\end{remark}
For an appropriate $\mu\in \mathcal{M}$, by \eqref{z30} one gets
\begin{eqnarray}
 \label{z38}
 \int_{\Gamma_*} F_n (\gamma) (B\mu) (d \gamma) & = & \int_{\Gamma_*^2} F_n (\gamma') \Omega^\gamma_\sigma (d \gamma') \mu(d \gamma) 
 \\[.2cm] \nonumber & = & \int_{\Gamma_*} \left( \int_X b_\sigma (x) F_n (\gamma \cup x) dx \right)\mu( d \gamma) \\[.2cm] \nonumber & + & \int_{\Gamma_*} \left( \sum_{x \in \gamma} \left[m_\sigma (x)    +  \sum_{y\in \gamma\setminus x} a_\sigma (x,y)\right] F_n (\gamma \setminus x) \right)\mu( d \gamma).
\end{eqnarray}
For $n=0$ and $\mu\in \mathcal{M}^{+}$, this implies 
\begin{equation}
 \label{z39}
 \|B\mu \| = \mu(R_\sigma), \quad {\rm hence} \quad  (L^\dagger_\sigma \mu )(\Gamma_*) = \mu (R_\sigma) - \mu (R_\sigma) = 0,
\end{equation}
see \eqref{z32}.
For $\mu\in \mathcal{M}^{+}$ and $n\geq 1$, by \eqref{z33a} and the evident estimate $F_n (\gamma\setminus x) \leq F_n (\gamma)$ we get from \eqref{z38} the following
\begin{eqnarray*}
 \int_{\Gamma_*} F_n (\gamma) (B\mu) (d \gamma) & \leq & n \|b\| \langle \psi \rangle  \mu (F_{n-1}) +  \int_{\Gamma_*} F_n (\gamma) R_\sigma (\gamma) \mu (d \gamma),
\end{eqnarray*}
which readily yields that
\begin{gather}
 \label{z41}
B : {\rm Dom}_n (A) \to \mathcal{M}_n, \qquad n\geq 2.
 \end{gather}
Now we set, see \eqref{z37}, 
\begin{gather}
 \label{Dom}
 {\rm Dom}(L^\dagger_\sigma) = {\rm Dom}_0(A) = \{ \mu \in \mathcal{M}:  |\mu|(R_\sigma) < \infty\}.
\end{gather}
\begin{lemma}
 \label{7lm}
 For each $\sigma\in (0,1]$, the closure of the operator $(L^\dagger_\sigma, {\rm Dom}(L^\dagger_\sigma))$ in $\mathcal{M}$ is the generator of a stochastic semigroup, $S^\sigma= \{S^\sigma(t)\}_{t\geq 0}$, such that $S^\sigma(t): \mathcal{M}_n\to \mathcal{M}_n$ for each $n\geq 2$ and $t\geq 0$.    
\end{lemma}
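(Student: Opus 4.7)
The strategy is to apply the Thieme--Voigt Proposition \ref{g1pn} with $\mathcal{X} = \mathcal{M}$, $\widetilde{\mathcal{X}} = \mathcal{M}_n$ for an arbitrary fixed $n\geq 2$, and the splitting $L^\dagger_\sigma = A+B$ given in \eqref{z32}, on the domain $\mathcal{D} = \mathrm{Dom}(L^\dagger_\sigma) = \mathrm{Dom}_0(A)$. Assumption \ref{gass1} is already checked in the paragraph preceding \eqref{g26}, so only the four conditions of Proposition \ref{g1pn} require verification.

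Conditions (i), (ii), and (iii) are routine. Positivity of $-A$ and $B$ on $\mathcal{D}^+$ is immediate from \eqref{z30}. For (ii), $A$ is multiplication by $-R_\sigma$ and therefore generates the explicit substochastic semigroup $(S_0(t)\mu)(d\gamma) = e^{-tR_\sigma(\gamma)}\mu(d\gamma)$, with $\|S_0(t)\mu\|_n \leq \|\mu\|_n$; strong continuity on both $\mathcal{M}$ and $\mathcal{M}_n$ follows by dominated convergence applied against the finite measures $|\mu|$ and $F_k|\mu|$ ($k\leq n$), while Remark \ref{z1rk} identifies $(A,\mathrm{Dom}_n(A))$ as the generator of the restriction. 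Condition (iii) is precisely \eqref{z39}.

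The key step is condition (iv). A direct calculation using \eqref{z33a} together with the symmetric identity $F_k(\gamma) - F_k(\gamma\setminus x) = k\psi(x)F_{k-1}(\gamma\setminus x)$ gives, for $\mu\in \mathrm{Dom}_n(A)\cap \mathcal{M}^+$ and $k\geq 1$,
\begin{equation*}
\mu(L_\sigma F_k) = k\langle b_\sigma\psi\rangle\mu(F_{k-1}) - k\mu\!\left(\sum_{x\in\gamma}\psi(x)\Bigl(m_\sigma(x) + \sum_{y\in\gamma\setminus x}a_\sigma(x,y)\Bigr)F_{k-1}(\gamma\setminus x)\right).
\end{equation*}
The last term is nonnegative and may be discarded; summing with weights $1/k!$ yields $\varphi_n(L^\dagger_\sigma\mu)\leq \langle b_\sigma\psi\rangle\varphi_n(\mu)$. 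Separately, the pointwise bound \eqref{z36} combined with $n\geq 2$ gives $\|A\mu\|_{\mathcal{M}} = \mu(R_\sigma)\leq C_\sigma\varphi_n(\mu)$ with an explicit $\sigma$-dependent constant. Combining these, $\varphi_n(L^\dagger_\sigma\mu) + \epsilon\|A\mu\|_{\mathcal{M}}\leq (\langle b_\sigma\psi\rangle + \epsilon C_\sigma)\varphi_n(\mu)$ for any $\epsilon>0$, which is exactly (iv). Proposition \ref{g1pn} then supplies the required stochastic semigroup $S^\sigma$ generated by the closure of $(L^\dagger_\sigma,\mathcal{D})$ and leaving each $\mathcal{M}_n$, $n\geq 2$, invariant.

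The only genuinely delicate point is choosing $\widetilde{\mathcal{X}}$ so that both conditions (ii) and (iv) hold simultaneously. The restriction $n\geq 2$ is forced by the quadratic term in \eqref{z36}: the bound $\|A\mu\|_{\mathcal{M}}\leq C_\sigma\varphi_n(\mu)$ used above fails for $n=0,1$ precisely because the competition contribution in $R_\sigma$ is quadratic in $\gamma$. The $\psi_\sigma$-regularization \eqref{z2}--\eqref{z3} is what makes this bound valid for each fixed $\sigma>0$; it would break down at $\sigma=0$, which is the motivation for treating the auxiliary models here before passing to the limit $\sigma\to 0$ in the next section.
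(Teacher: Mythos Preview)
Your proposal is correct and follows essentially the same route as the paper: apply Proposition \ref{g1pn} with $\mathcal{X}=\mathcal{M}$, $\widetilde{\mathcal{X}}=\mathcal{M}_n$ ($n\geq 2$), verify (i)--(iii) routinely, and obtain (iv) by discarding the negative departure term in $\mu(L_\sigma F_k)$ and bounding $\mu(R_\sigma)$ via \eqref{z36}. The only minor difference is that you argue strong continuity of $S_0$ directly by dominated convergence, whereas the paper uses an $\mathcal{M}_2$-approximation; both are valid, and your version is arguably cleaner. One small imprecision: condition (iii) of Proposition \ref{g1pn} also requires $B:\widetilde{\mathcal{D}}\to\widetilde{\mathcal{X}}$, which is \eqref{z41} rather than \eqref{z39}; you should cite both.
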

\begin{proof}
Our aim is to show that the operator defined in \eqref{z32} and \eqref{Dom} satisfies the conditions of Proposition \ref{g1pn}. By the very definition of $A$ and $B$ in \eqref{z32}, and then by \eqref{z36a}, \eqref{z41} and \eqref{Dom}, it follows that condition (i) is met.
Define
\begin{equation}
 \label{u1}
 S_0(t) \mu(d \gamma) = \exp \left( - t R_\sigma (\gamma)\right) \mu(d \gamma), \qquad \mu \in \mathcal{M}, \ \ t\geq 0. 
\end{equation}
Clearly, for each $n\in \mathds{N}_0$, $S_0(t):\mathcal{M}_n \to \mathcal{M}_n$, acting as a multiplication operator, and $S_0 = \{S_0(t)\}_{t\geq 0}$ is a positive semigroup. Certainly,
\begin{equation}
 \label{u2}
\| S_0(t) \mu\| \leq \|\mu \|, \qquad \mu \in \mathcal{M}^{+}.
\end{equation}
To show the strong continuity of $S_0$ in $\mathcal{M}$, for fixed $\mu\in \mathcal{M}$ and $\varepsilon >0$, we have to find $\delta>0$ such that $\|S_0(t) \mu - \mu\| < \varepsilon$ whenever $t < \delta$. Since $\mathcal{M}_2$ is dense in $\mathcal{M}$, see \eqref{z36a} and \eqref{z37}, we can find $\mu'\in \mathcal{M}_2$ such that $\|(\mu -\mu')^{\pm}\|< \varepsilon/6$. Then by \eqref{u1} and \eqref{u2} we have
\begin{gather*}
 \|S_0(t) \mu - \mu\| \leq \|\mu - \mu'\| + \|S_0(t) (\mu - \mu')\| + \|S_0(t) \mu' - \mu'\|\\[.2cm] \nonumber \leq t \|A \mu'\|+ 2\varepsilon /3 = t |\mu'|( R_\sigma)+  2\varepsilon /3 \leq c_\sigma t \|\mu'\|_2 + 2\varepsilon /3,
\end{gather*}
where $c_\sigma = \max\{\langle b_\sigma \rangle ; \|m\|/\sigma; 2 \|a\|/\sigma^2 \}$, see \eqref{z36}. This estimate yields the strong continuity in question. The strong continuity of $S_0$ in $\mathcal{M}_n$ can be shown in a similar way by employing \eqref{z36q} and \eqref{z37}. Now we take into account Remark \ref{z1rk} and thus conclude that condition (ii) of Proposition \ref{g1pn} is also met. The validity of (iii) follows by \eqref{z41} (about $B$) and \eqref{g26}, \eqref{z39} (about $\varphi$). 
To complete the whole proof, we have to show that, for each $n\geq 2$, there exist positive $c$ and $\epsilon$ such that, for each $\mu \in {\rm Dom}_n (A) \cap \mathcal{M}_n^{+}$, the following holds 
\begin{equation*}
 \varphi_n (L^\dagger_\sigma \mu ) \leq c \varphi_n (\mu ) - \epsilon \mu(R_\sigma),
\end{equation*}
where $\varphi_n$ is as in \eqref{g26}. In view of \eqref{z36}, to this end it is enough to show that  
\begin{equation}
 \label{u3}
\sum_{k=0}^n \frac{1}{k!} \int_{\Gamma_*} F_k (\gamma) (L^\dagger_\sigma \mu) (d \gamma) \leq C  \varphi_n (\mu ), \qquad \mu \in {\rm Dom}_n (A) \cap \mathcal{M}_n^{+},
\end{equation}
holding for some $C>0$. By \eqref{z29} we have
\begin{equation*}
 \int_{\Gamma_*} F_k (\gamma) (L^\dagger_\sigma \mu) (d \gamma) = \mu(L_\sigma F_k).
\end{equation*}
At the same time, by \eqref{z33a} and similarly as in \eqref{X19}, see also \eqref{0d},  we conclude that
\[
 \mu(L_\sigma F_k) \leq \mu(L^{+}_\sigma F_k) \leq \|b\| \langle \psi \rangle k \mu( F_{k-1}),
\]
which by \eqref{u9} yields the validity
of \eqref{u3} with $C = n \|b\| \langle \psi \rangle$. Now the proof follows by Proposition \ref{g1pn}.
\end{proof}

Lemma \ref{7lm} establishes the existence of the transition function \eqref{f20}. It is straightforward that   $p^\sigma_t$ defined in this way satisfies the standard conditions and thus determines finite-dimensional
distributions of a Markov process; see \cite[pages 156, 157]{EK}. The next step, see Sect. 7, is to prove that such processes have cadlag versions.

\subsubsection{The Cauchy problem}

Now we use the semigroup constructed in Lemma \ref{7lm} to solve the following Cauchy problem in $\mathcal{M}$
\begin{equation}
 \label{CP}
 \frac{d}{dt} \mu_t = L^\dagger_\sigma \mu_t, \qquad \mu_t|_{t=0} = \mu_0, \quad   \sigma \in (0,1]. 
\end{equation}
By \cite[Proposition 6.2, page 145]{EN} this problem has a unique solution given by the formula
\begin{equation}
 \label{CP1}
 \hat{\mu}^\sigma_t = S^\sigma(t) \mu_0 = \int_{\Gamma_*} p^\sigma_t (\gamma, \cdot) \mu_0(d \gamma),
\end{equation}
whenever $\mu_0 \in {\rm Dom} (L^\dagger_\sigma)$, see \eqref{Dom}. Here we use notations $\hat{\mu}^\sigma_t$ in order not to mix this solution with the measures ${\mu}^\sigma_t$ mentioned in Remark \ref{g1rk}. By \eqref{g25} and \eqref{z37} $\mu_0\in \mathcal{P}_{\rm exp}$ lies in ${\rm Dom} (L^\dagger_\sigma)$. Hence, $\hat{\mu}^\sigma_t$ lies in each $\mathcal{M}_n$, but a priori not in $\mathcal{P}_{\rm exp}$.  
\begin{lemma}
 \label{CPlm}
For a given $\mu_0\in \mathcal{P}_{\rm exp}$ and $\sigma \in (0,1]$, let the map $t \mapsto \mu^\sigma_t$ be the unique solution of the Fokker-Planck equation for $(L_\sigma, \mathcal{F}, \mu_0)$ mentioned in Remark \ref{g1rk}. Then for each $t>0$, it follows that $\mu^\sigma_t = \hat{\mu}^\sigma_t$, where the latter is obtained as in \eqref{CP1} with the same $\mu_0$.
\end{lemma}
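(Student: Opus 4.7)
The plan is to verify that $\hat{\mu}^\sigma_t$ satisfies the Fokker--Planck equation for $(L_\sigma,\mathcal{F},\mu_0)$ in the sense of Definition \ref{1df}; since Remark \ref{g1rk} singles out $\mu^\sigma_t$ as the unique such solution in $\mathcal{P}_{\rm exp}$, the equality $\hat{\mu}^\sigma_t = \mu^\sigma_t$ will then follow at once.

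By \eqref{g25}, $\mu_0$ lies in every $\mathcal{M}_n$, and hence by \eqref{z37} also in ${\rm Dom}(L_\sigma^\dagger)$. Consequently $\hat{\mu}^\sigma_t = S^\sigma(t)\mu_0$ is the strict solution of \eqref{CP}, satisfying
\begin{equation*}
\hat{\mu}^\sigma_t = \mu_0 + \int_0^t L_\sigma^\dagger \hat{\mu}^\sigma_s\, ds
\end{equation*}
as an identity in the norm of $\mathcal{M}$; and by Lemma \ref{7lm} the trajectory $s\mapsto \hat{\mu}^\sigma_s$ stays in every $\mathcal{M}_n$, $n\geq 2$. For any bounded $F \in \mathcal{F}$ the pairing $\mu \mapsto \mu(F)$ is a continuous linear functional on $\mathcal{M}$, and a direct Fubini-type rearrangement of \eqref{z30}--\eqref{z32} against \eqref{Ls} yields the elementary identity
\begin{equation*}
(L_\sigma^\dagger \mu)(F) = \mu(L_\sigma F), \qquad \mu \in {\rm Dom}(L_\sigma^\dagger),
\end{equation*}
whose interchange of summations and integrations is justified by the pointwise bound $|L_\sigma F| \leq 2\|F\|_\infty R_\sigma$ and the fact that $\mu(R_\sigma) < \infty$ by the very definition of ${\rm Dom}(L_\sigma^\dagger)$. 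Applying this to the integrated semigroup identity I get
\begin{equation*}
\hat{\mu}^\sigma_t(F) = \mu_0(F) + \int_0^t \hat{\mu}^\sigma_s(L_\sigma F)\, ds,
\end{equation*}
which is exactly \eqref{FPE} with $L$ replaced by $L_\sigma$.

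To invoke uniqueness it remains to place $\hat{\mu}^\sigma_t$ in $\mathcal{P}_{\rm exp}$. Stochasticity of $S^\sigma$ gives $\hat{\mu}^\sigma_t \in \mathcal{M}^{1,+}$, while the analogue of Lemma \ref{01lm} for $L_\sigma$ (whose proof transfers verbatim, since $b_\sigma \leq b$, $m_\sigma \leq m$ and $a_\sigma(x,y)\leq a(x-y)$ preserve the bound \eqref{0da}) then places $\hat{\mu}^\sigma_t$ in $\mathcal{P}_{\rm exp}$; the uniqueness of Remark \ref{g1rk} forces $\hat{\mu}^\sigma_t = \mu^\sigma_t$. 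I expect the main technical hurdle to be the local integrability of $s\mapsto \hat{\mu}^\sigma_s(L_\sigma F)$ needed so that the Bochner integral in $\mathcal{M}$ genuinely commutes with the pairing; however, thanks to $|L_\sigma F| \leq 2\|F\|_\infty R_\sigma$ and the $\mathcal{M}_n$-invariance from Lemma \ref{7lm}, this reduces to the local boundedness of $s\mapsto \hat{\mu}^\sigma_s(R_\sigma)$, which is controlled by $\|\hat{\mu}^\sigma_s\|_2$ via \eqref{z36}.
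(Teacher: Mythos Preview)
Your proof is correct and follows essentially the same route as the paper: show that the semigroup trajectory $\hat{\mu}^\sigma_t$ satisfies the Fokker--Planck equation for $(L_\sigma,\mathcal{F},\mu_0)$ via the duality $(L_\sigma^\dagger\mu)(F)=\mu(L_\sigma F)$, then appeal to the uniqueness in Remark~\ref{g1rk}. Your version is more explicit about the integrability condition (i) of Definition~\ref{1df}, which the paper passes over; one minor redundancy is your final paragraph, since the uniqueness referred to in Remark~\ref{g1rk} (inherited from Theorem~\ref{1tm}) holds in the full class of solutions of Definition~\ref{1df}, not merely within $\mathcal{P}_{\rm exp}$, so you need not separately place $\hat{\mu}^\sigma_t$ there before invoking it.
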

\begin{proof}
Since $\hat{\mu}^\sigma_t$ solves \eqref{CP}, for each $F\in \mathcal{F}$ one has 
\[
 \hat{\mu}^\sigma_t (F) = \hat{\mu}^\sigma_0 (F) + \int_0^t (L_\sigma^\dagger \hat{\mu}^\sigma_s) (F) ds,
\]
which by \eqref{z29} yields that $\hat{\mu}^\sigma_t$ solves the  Fokker-Planck equation for $(L_\sigma, \mathcal{F}, \mu_0)$; hence,  $\hat{\mu}^\sigma_t = {\mu}^\sigma_t$ since the solution is unique.
\end{proof}

\section{The Markov Process}

\subsection{The cadlag paths}
In this subsection, we use Chentsov's theorem \cite{Chentsov} in the version formulated below as Proposition \ref{u1pn}. 
Its presentation is preceded by recalling that the metric $\rho$ introduced in \eqref{rho} makes $\Gamma_*$ a Polish space, see Proposition \ref{Gas1pn}. By means of this metric and the transition function as in \eqref{f20} we define
\begin{eqnarray}
 \label{u4}
 w^\sigma_u (\gamma) & = & \int_{\Gamma_*} \rho (\gamma, \gamma') p^\sigma_u (\gamma, d \gamma'), \\[.2cm] \nonumber W^\sigma_{u,v} (\gamma) & = & \int_{\Gamma_*} \rho (\gamma, \gamma') w^\sigma_u (\gamma') p^\sigma_v (\gamma, d \gamma').
\end{eqnarray}
Now for a certain $\mu \in \mathcal{P}_{\rm exp}$ and a triple $(t_1, t_2 , t_3)$, $0\leq t_1 < t_2 < t_3$, we set
\begin{equation}
 \label{u5}
 \widehat{W}^\sigma(t_1, t_2, t_3) = \int_{\Gamma_*} W^\sigma_{t_3-t_2,t_2 - t_1} (\gamma) \mu^\sigma_{t_1} (d \gamma), \qquad \mu^\sigma_{t} = S^\sigma (t) \mu. 
\end{equation}
see Lemma \ref{CPlm}.
By means of the main result of \cite{Chentsov} and \cite[Theorems 7.2 and 8.6 -- 8.8, pages 128 and 137--139]{EK} one can state the following, cf. \cite[Proposition 7.8]{KR}.
\begin{proposition}
 \label{u1pn}
 Assume that: (a) for each $t>0$, the family $\{\mu^\sigma_t: \sigma \in (0,1]\}\subset \mathcal{P}(\Gamma_*)$ is weakly relatively compact; (b) for each $T>0$, there exists $C(T)>0$ such that for each triple $(t_1, t_2 , t_3)$ satisfying $t_3\leq T$ the following holds
 \begin{equation}
  \label{u6}
  \widehat{W}^\sigma(t_1, t_2, t_3) \leq C(T)(t_3 - t_1)^2. 
 \end{equation} 
Then:
\begin{itemize}
 \item[(i)] For each $\sigma\in (0,1]$, the transition function $p^\sigma$ and $\mu\in \mathcal{P}_{\rm exp}$, see \eqref{u5}, determine a probability measure $P^\sigma_\mu$ on $\mathfrak{D}_{\mathds{R}_{+}} (\Gamma_*)$.
 \item[(ii)] The family $\{P^\sigma_\mu: \sigma\in (0,1]\}$ of the path measures just mentioned is tight, hence possesses accumulation points in the weak topology of $\mathcal{P}(\mathfrak{D}_{\mathds{R}_{+}}(\Gamma_*))$. 
 \end{itemize}
 \end{proposition}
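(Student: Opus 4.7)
The plan is to combine Chentsov's moment criterion with the Ethier--Kurtz tightness scheme for cadlag paths, both being available because $(\Gamma_*,\rho)$ is Polish by Proposition \ref{Gas1pn}.

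For claim (i), I would first use the kernel $p^\sigma$ and the initial distribution $\mu$ to build, by the standard Markovian prescription, a consistent family of finite-dimensional distributions on $\Gamma_*^{[0,+\infty)}$, with Chapman--Kolmogorov inherited from the semigroup identity for $S^\sigma$ supplied by Lemma \ref{7lm}. Kolmogorov's extension theorem then yields a probability measure on the product space. The remaining task is to realise this measure on $\mathfrak{D}_{\mathds{R}_{+}}(\Gamma_*)$, and this is precisely what hypothesis \eqref{u6} is designed to give. Indeed, unfolding \eqref{u4}--\eqref{u5} through the Markov property, one checks that $\widehat{W}^\sigma(t_1,t_2,t_3)$ coincides with the expectation of $\rho(\gamma_{t_1},\gamma_{t_2})\,\rho(\gamma_{t_2},\gamma_{t_3})$ under the three-time finite-dimensional law. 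The bound $C(T)(t_3-t_1)^2$ is thus exactly the two-increment moment estimate (exponent one on each factor, exponent two on $t_3-t_1$) that Chentsov's theorem \cite{Chentsov} requires in order to produce a cadlag modification of a Polish-space-valued process; the law of that modification is the sought $P^\sigma_\mu$.

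For claim (ii) I would invoke \cite[Thms.~7.2 and 8.6--8.8]{EK}. Tightness of $\{P^\sigma_\mu\}_{\sigma\in(0,1]}$ on $\mathfrak{D}_{\mathds{R}_{+}}(\Gamma_*)$ rests on two ingredients. The first is the compact containment condition, which I would derive from hypothesis (a) together with \eqref{u6}: (a) furnishes single-time tightness at each $t$, while \eqref{u6} controls the path fluctuations between such times, and a standard Markovian argument combines them into a compact set $K\subset\Gamma_*$ carrying the entire path on $[0,T]$ with probability at least $1-\varepsilon$, uniformly in $\sigma$. The second ingredient is a uniform Aldous-type modulus-of-continuity control, which is exactly what \eqref{u6} delivers when fed into \cite[Thm.~8.8]{EK}. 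Prokhorov's theorem then provides the claimed weak accumulation points.

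The step I expect to be the most delicate is the invocation of Chentsov in item (i): the key point is that \eqref{u6} is a bound on the product of \emph{two consecutive} increments rather than on a single one, which is precisely what distinguishes the cadlag case from the continuous case and explains why the exponent $2$ on $t_3-t_1$ is sharp. One must also verify that the classical statement applies verbatim to processes valued in the general Polish metric space $(\Gamma_*,\rho)$ and not merely in $\mathds{R}^d$; this is standard but not automatic. Once those points are settled, the remaining work is essentially bookkeeping of Markov kernels so that (a) and (b) plug cleanly into the cited theorems.
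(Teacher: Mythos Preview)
Your proposal is correct and follows exactly the route the paper indicates: the paper does not supply a self-contained proof of this proposition but simply records it as a consequence of Chentsov \cite{Chentsov} and \cite[Theorems 7.2 and 8.6--8.8]{EK}, referring to \cite[Proposition 7.8]{KR} for the detailed adaptation. Your sketch correctly unravels $\widehat{W}^\sigma(t_1,t_2,t_3)$ as the expected product of two consecutive $\rho$-increments and feeds hypotheses (a) and (b) into the cited theorems in the intended way.
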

\begin{remark}
 \label{u1rk}
For each $s>0$, one can consider $\widehat{W}^\sigma(t_1, t_2, t_3)$ with $t_1 \geq s$ and  $\mu^\sigma_{t} = S^\sigma (t-s) \mu$. Then by Proposition \ref{u1pn} $p^\sigma$ and $\mu\in \mathcal{P}_{\rm exp}$ determine a probability measure $P^\sigma_{s,\mu}$ on $\mathfrak{D}_{[s,+\infty)} (\Gamma_*)$, and the family $\{P^\sigma_{s,\mu}: \sigma\in (0,1]\}$ is tight in the weak topology of $\mathcal{P}(\mathfrak{D}_{[s,+\infty)}(\Gamma_*))$.
\end{remark}
The measure $P^\sigma_{s,\mu}$ is defined by its finite-dimensional marginals, which in turn are defined by 
$p^\sigma$ and $\mu$ as follows, see eq. (1.10), page 157 in \cite{EK}. For a given $m\in \mathds{N}$, the $m$-dimensional marginal is the following measure 
\begin{eqnarray}
 \label{EKP}
 & & P^\sigma_{s,\mu} ((\mathds{1}_{\mathbb{A}_1}\circ \varpi_{t_1}) \cdots (\mathds{1}_{\mathbb{A}_m}\circ \varpi_{t_m})  )= \int_{\Gamma_*^{m+1}}\mathds{1}_{\mathbb{A}_m} (\gamma_m) p^\sigma_{t_m-t_{m-1}} (\gamma_{m-1}, d \gamma_m)\\[.2cm] \nonumber & & \quad \times \mathds{1}_{\mathbb{A}_{m-1}} (\gamma_{m-1}) p^\sigma_{t_{m-1}-t_{m-2}} (\gamma_{m-2}, d \gamma_{m-1}) \cdots \mathds{1}_{\mathbb{A}_1} (\gamma_1) p^\sigma_{t_1-s} (\gamma_{0}, d \gamma_1) \mu(d\gamma_0),
\end{eqnarray}
where  $\mathbb{A}_1, \dots , \mathbb{A}_m$ are  in $\mathcal{B}(\Gamma_*)$. 
\begin{lemma}
 \label{u1lm}
 For each $\mu\in \mathcal{P}_{\rm exp}$ and $T>0$, the estimate in \eqref{u6} holds for all $\sigma\in (0,1]$ with a $\sigma$-independent $C(T)>0$.
\end{lemma}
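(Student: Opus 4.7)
The plan is to reduce \eqref{u6} to a second moment estimate for the $\psi$-weighted jump random measure of the Markov process with generator $L_\sigma$, then to control that estimate uniformly in $\sigma$ using the sub-Poissonian bound on the marginals $\mu^\sigma_r$ established in Remark \ref{g1rk} and Lemma \ref{CPlm}.

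First, the pathwise inequality $\rho(\gamma_s,\gamma_t)\le \Psi_{(s,t]}(\bar\gamma):=\sum_{\tau_j\in(s,t]}\psi(x_j)$ holds for every $\bar\gamma\in\mathfrak{D}_{[0,+\infty)}(\Gamma_*)$, where $(\tau_j,x_j)$ enumerate the jump times and jump points: for every $g\in C^1_{\rm L}$, a birth or death at a point $x$ changes $\gamma(\psi g)$ by $\pm\psi(x)g(x)$, and $|g|\le 1$. Hence
$$
\widehat W^\sigma(t_1,t_2,t_3)\le E_{P^\sigma_\mu}\!\big[\Psi_{(t_1,t_2]}\Psi_{(t_2,t_3]}\big],
$$
where $P^\sigma_\mu$ is the path law of the process started from $\mu^\sigma_0=\mu$.

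Second, I identify the compensator $\Psi_{(0,t]}=\int_0^t R_\sigma(\gamma_r)\,dr + M_t$ with
$$
R_\sigma(\gamma)=\int b_\sigma(x)\psi(x)\,dx+\sum_{x\in\gamma}\psi(x)\Big(m_\sigma(x)+\sum_{y\in\gamma\setminus x}a_\sigma(x,y)\Big).
$$
The key $\sigma$-uniformity comes from $\psi_\sigma\le 1$, giving $R_\sigma(\gamma)\le \bar R(\gamma):=\|b\|\langle\psi\rangle+\|m\|\gamma(\psi)+\sum_{x\in\gamma}\sum_{y\in\gamma\setminus x}\psi(x)a(x-y)$, with $\bar R$ independent of $\sigma$. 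By Remark \ref{g1rk} and \eqref{f4a} the type of $\mu^\sigma_r$ is at most $\varkappa_T:=\varkappa_{\mu_0}+\|b\|T$ for $r\le T$, uniformly in $\sigma$. Expanding $\bar R$ and $\bar R^2$ as linear combinations of $KG$-monomials as in \eqref{0p} and applying \eqref{X8} together with Ruelle's bound \eqref{X13} (up to order four for $\bar R^2$) yields
$$
\sup_{\sigma\in(0,1]}\sup_{r\le T}\mu^\sigma_r(\bar R^k)\le c_k(T),\qquad k=1,2,
$$
with $c_k(T)$ depending only on $\varkappa_T$ and the data of Assumption \ref{1ass}.

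Finally, the quadratic bound is obtained from the semimartingale decomposition $\Psi_{(t_i,t_j]}=\int_{t_i}^{t_j}R_\sigma(\gamma_r)\,dr+M_{(t_i,t_j]}$ together with the Markov identity $E[\Psi_{(t_2,t_3]}\mid\mathcal F_{t_2}]=\int_0^{t_3-t_2}E_{\gamma_{t_2}}[R_\sigma(\gamma_r)]\,dr$ and Fubini. The dominant contribution is the drift--drift piece
$$
\int_{t_1}^{t_2}\!\!\int_{t_2}^{t_3}E_{P^\sigma_\mu}[R_\sigma(\gamma_s)R_\sigma(\gamma_t)]\,ds\,dt\le c_2(T)(t_2-t_1)(t_3-t_2)\le \tfrac14 c_2(T)(t_3-t_1)^2,
$$
by Cauchy--Schwarz and the moment bound above. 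The principal obstacle is to control the residual martingale--drift cross terms, which a priori scale only as $(t_3-t_1)^{3/2}$; following \cite[Proposition~7.8]{KR}, they are absorbed into the required $(t_3-t_1)^2$ bound by exploiting the L\'evy system of the pure-jump process, the inequality $[M]_{(s,t]}\le\int_s^t R_\sigma(\gamma_r)\,dr$ coming from $\psi\le 1$, and the orthogonality of martingale increments across the common endpoint $t_2$. All constants remain functions only of $\varkappa_T$ and the parameters of Assumption \ref{1ass}, giving the claimed $\sigma$-independent constant $C(T)$.
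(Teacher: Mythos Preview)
Your argument has two genuine gaps.

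\textbf{Circularity.} The quantities $w^\sigma_u$, $W^\sigma_{u,v}$, $\widehat W^\sigma$ in \eqref{u4}--\eqref{u5} are defined purely through the transition kernel $p^\sigma_t$ obtained from the semigroup $S^\sigma$ in Lemma~\ref{7lm}. No path measure on $\mathfrak D_{[0,+\infty)}(\Gamma_*)$ is available at this point; the existence of $P^\sigma_\mu$ is the \emph{conclusion} of Proposition~\ref{u1pn}, item (i), and it rests on the very estimate \eqref{u6} you are trying to prove. Your reduction to $E_{P^\sigma_\mu}[\Psi_{(t_1,t_2]}\Psi_{(t_2,t_3]}]$ therefore presupposes a cadlag pure-jump realization whose existence has not yet been established. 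One can sometimes patch this for fixed $\sigma$ by a direct minimal jump-process construction plus non-explosion, but you do not carry this out, and it is not what \cite[Proposition~7.8]{KR} provides (that is the abstract Chentsov criterion, not a pathwise construction).

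\textbf{The cross term.} Even granting a pathwise realization, your treatment of $E[M_{(t_1,t_2]}A_{(t_2,t_3]}]$ is not a proof. Conditioning on $\mathfrak F_{t_2}$ kills $E[M_1M_2]$ and $E[A_1M_2]$, but \emph{not} $E[M_1A_2]$: here $M_1$ is $\mathfrak F_{t_2}$-measurable and $E[A_2\mid\mathfrak F_{t_2}]=g(\gamma_{t_2})$ is a nontrivial function of the state at $t_2$, correlated with $M_1$. Cauchy--Schwarz together with $E[M_1^2]\le c_1(T)(t_2-t_1)$ gives only order $(t_3-t_1)^{3/2}$, as you note. To recover the missing half power one must expand $g(\gamma_{t_2})$ via Dynkin's formula and control the covariation $[M_1,N]$ of $M_1$ with the martingale part $N$ of $g(\gamma_{t_2})-g(\gamma_{t_1})$ through the L\'evy system; this requires moment bounds on $L_\sigma g$ and on the jump kernel applied to $\psi\cdot\nabla g$, none of which you write down. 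The phrase ``orthogonality of martingale increments across $t_2$'' does not address this term.

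The paper avoids both issues by never invoking a path measure. It works directly with the forward equation \eqref{u7} for $p^\sigma_u(\gamma,\cdot)$, expanding $w^\sigma_u(\gamma)$ and then $W^\sigma_{u,v}(\gamma)$ through iterated applications of $L_\sigma$; see \eqref{u8}--\eqref{u18}. The key device is that $\nabla_x V_\sigma\ge 0$ (and likewise for the successive integrands), so one may drop $L^-_\sigma$ and bound by $L^+_\sigma$ alone, which is controlled by $\|b\|\langle\psi\rangle$ and lower-order $F_k$'s. Three iterations produce an upper bound for $w^\sigma_u$ that is polynomial in $u$ with coefficients $V_\sigma,\widetilde V_\sigma,C_V$; the same trick on $W^\sigma_{u,v}$ yields the factor $(t_2-t_1)(t_3-t_2)\le\tfrac14(t_3-t_1)^2$ \emph{before} integrating against $\mu^\sigma_{t_1}$, and the remaining $\mu^\sigma_{t_1}$-integrals are then bounded via the correlation functions and \eqref{u23}. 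No martingale decomposition, and hence no cross term, ever appears.
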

\begin{proof}
For each $\gamma \in \Gamma_*$ and $\sigma \in (0,1]$, we have $\delta_\gamma\in {\rm Dom}(L^\dagger_\sigma)$, see \eqref{Dom} and \eqref{z36}. By standard semigroup formulas, e.g. \cite[page 9]{EK}, it then follows 
\begin{equation}
 \label{u7}
 p^\sigma_u (\gamma, \cdot) = \delta_\gamma +\int_0^u L^\dagger_\sigma p_v^\sigma (\gamma, \cdot) d v.
\end{equation}
We apply this in \eqref{u4} and then get
\begin{eqnarray}
 \label{u8}
 w^\sigma_u (\gamma) & = & w^\sigma_0 (\gamma) + \int_0^u \left( \int_{\Gamma_*} \rho(\gamma, \gamma')(L^\dagger_\sigma p_v^\sigma (\gamma, d\gamma'))\right) d v \\[.2cm] \nonumber & = & \int_0^u \left( \int_{\Gamma_*}(L_\sigma \rho(\gamma, \gamma')) p_v^\sigma (\gamma, d\gamma')\right) d v,
\end{eqnarray}
since $w^\sigma_0 (\gamma) = \rho(\gamma, \gamma) = 0$. With the help of the triangle inequality 
we obtain, cf. \eqref{Ls} and \eqref{rho},
\begin{gather}
 \label{u9}
 |\nabla_x \rho(\gamma, \gamma')| = |\rho(\gamma, \gamma'\cup x) - \rho(\gamma, \gamma')| \leq \rho(\gamma'\cup x, \gamma') \leq \psi (x),
\end{gather}
which then yields
\begin{gather}
 \label{u10}
 |L_\sigma \rho(\gamma, \gamma')| \leq \|b\| \langle \psi \rangle + \|m\| F_1(\gamma') + \sum_{x\in \gamma'} \psi(x) \sum_{y\in \gamma'\setminus x} a_\sigma (x,y) =: V_\sigma (\gamma').
\end{gather}
Now, similarly as in \eqref{u8}, by \eqref{u7} and the latter one gets
\begin{eqnarray}
 \label{u11}
 h^\sigma_v (\gamma) & := & \int_{\Gamma_*}(L_\sigma \rho(\gamma, \gamma')) p_v^\sigma (\gamma, d\gamma') \leq \int_{\Gamma_*}V_\sigma (\gamma')p_v^\sigma (\gamma, d\gamma')  \\[.2cm] \nonumber & = & V_\sigma (\gamma) + \int_0^v (L_\sigma V_\sigma (\gamma')) p_s^\sigma (\gamma, d\gamma') ds.
\end{eqnarray}
By \eqref{u10} it follows that
\begin{gather}
 \label{u12}
 \nabla_x V_\sigma (\gamma') = \|m\| \psi(x) + \omega_\sigma (x, \gamma'),  
\end{gather}
where
\begin{equation}
\label{u19c}
\omega_\sigma (x, \gamma') = \sum_{z\in \gamma'} \left(\psi(x) a_\sigma (x,z) + \psi(z) a_\sigma (z,x) \right).
\end{equation}
Since $\nabla_x V_\sigma (\gamma')\geq 0$, cf. \eqref{0b}, \eqref{0d}, then 
\begin{gather}
 \label{u13}
 L_\sigma V_\sigma (\gamma') \leq L^{+}_\sigma V_\sigma (\gamma') = \|m\| \int_X b_\sigma (x) \psi(x) d x \\[.2cm] \nonumber + \int_X b_\sigma (x)\left(\sum_{z\in \gamma'} \psi(z) a_\sigma (z,x) +  \psi(x) \sum_{y\in \gamma'}  a_\sigma (x,y) \right) d x \\[.2cm] \nonumber \leq \|m\| \|b\| \langle \psi \rangle + \|b\| \langle a \rangle F_1(\gamma') +  \|b\| \sum_{x\in \gamma'} f_\sigma (x) =: \widetilde{V}_\sigma (\gamma'),  
\end{gather}
where 
\begin{equation}
 \label{u14}
 f_\sigma (x) = \int_X a_\sigma(z,x) \psi(z) dz.
\end{equation}
We apply \eqref{u13} and \eqref{u7} in \eqref{u11} and get
\begin{eqnarray}
 \label{u15}
  h^\sigma_v (\gamma) &  \leq & V_\sigma (\gamma) + \int_0^v \widetilde{V}_\sigma (\gamma') p^\sigma_s (\gamma ,d \gamma') \\[.2cm] \nonumber & = & 
   V_\sigma (\gamma) +
  v \widetilde{V}_\sigma (\gamma) + \int_0^v \int_0^s \int_{\Gamma_*} \left(L_\sigma \widetilde{V}_\sigma (\gamma') \right) p^\sigma_t (\gamma, d \gamma') dt ds.
\end{eqnarray}
Furthermore, by \eqref{u13} and \eqref{u14}, we get
\begin{gather*}
 L_\sigma \widetilde{V}_\sigma (\gamma') \leq L^{+}_\sigma \widetilde{V}_\sigma (\gamma') \leq \|b\|^2 \left(\langle a \rangle \langle \psi \rangle + \int_X f_\sigma (x) dx  \right) \leq 2 \|b\|^2 \langle \psi \rangle \langle a \rangle  =:C_V,
\end{gather*}
which we use in \eqref{u15} and then in \eqref{u8}, and thereafter obtain
\begin{gather}
 \label{u17}
  w^\sigma_u (\gamma)  \leq \tilde{w}^\sigma_u (\gamma):= u V_\sigma (\gamma) + \frac{u^2}{2} \widetilde{V}_\sigma (\gamma) +\frac{u^3}{6}C_V.
\end{gather}
Now we turn to estimating $W^\sigma_{u,v}$. First we write, see the second line in \eqref{u4} and \eqref{u17}, 
\begin{eqnarray}
 \label{u18}
W^\sigma_{u,v} (\gamma) & \leq & u \Upsilon_{1,v}^\sigma (\gamma) + \frac{u^2}{2} \Upsilon_{2,v}^\sigma (\gamma) +\frac{u^3}{6} \Upsilon_{3,v}^\sigma (\gamma),  \\[.2cm]  \nonumber \Upsilon_{1,v}^\sigma (\gamma)& = & \int_{\Gamma_*} \rho(\gamma, \gamma') V_\sigma (\gamma') p^\sigma_v (\gamma, d \gamma') = \int_0^v \int_{\Gamma_*}L_\sigma \rho(\gamma, \gamma') V_\sigma (\gamma') p^\sigma_s (\gamma, d \gamma') ds,  \\[.2cm]  \nonumber \Upsilon_{2,v}^\sigma (\gamma) & = & \int_{\Gamma_*} \rho(\gamma, \gamma') \widetilde{V}_\sigma (\gamma') p^\sigma_v (\gamma, d \gamma'),   \\[.2cm]  \nonumber \Upsilon_{3,v}^\sigma (\gamma) & = & C_V \int_{\Gamma_*} \rho(\gamma, \gamma') p^\sigma_v (\gamma, d \gamma') . 
\end{eqnarray}
In the same way as in \eqref{u9} by (\ref{u12}) we get
\begin{gather*}
|\nabla_x \rho(\gamma, \gamma') V_\sigma (\gamma')| \leq \rho(\gamma, \gamma'\cup x)\nabla_x V_\sigma (\gamma') + |\nabla_x \rho(\gamma, \gamma')| V_\sigma (\gamma') \\[.2cm] \nonumber \leq \psi(x) V_\sigma (\gamma') + \|m\|\psi(x) + \omega_\sigma (x,\gamma').
\end{gather*}
Thereafter, similarly as in \eqref{u9} and \eqref{u10}, it follows that
\begin{gather}
 \label{u19a}
 |L_\sigma \rho(\gamma, \gamma') V_\sigma (\gamma')| \leq\left( V_\sigma (\gamma')+ \|m\|  \right) \int_X b_\sigma (x) \psi(x)d x \\[.2cm] \nonumber       + \left( V_\sigma (\gamma')+ \|m\|  \right) \sum_{x\in \gamma'}\psi(x) \left[m_\sigma (x) + \sum_{y\in \gamma' \setminus x} a_\sigma(x,y) \right]\\[.2cm] \nonumber + \int_X b_\sigma (x) \omega_\sigma (x, \gamma') dx
 \\[.2cm] \nonumber + \sum_{x\in \gamma'} \left(m_\sigma (x) + \sum_{y\in \gamma'\setminus x} a_\sigma (x,y) \right)\omega_\sigma (x, \gamma'\setminus x),
 \\[.2cm] \nonumber
  \leq V_\sigma (\gamma') [\|m\|+ V_\sigma (\gamma')] + \Delta_\sigma (\gamma')=: V_{1,\sigma}(\gamma'),
\end{gather}
where $\omega_\sigma (x, \gamma')$ is given in \eqref{u19c} and
\begin{gather}
 \label{u19b}
 \Delta_\sigma (\gamma') = \int_X b_\sigma (x) \omega_\sigma (x, \gamma') dx + \sum_{x\in \gamma'} \left(m_\sigma (x) + \sum_{y\in \gamma'\setminus x} a_\sigma (x,y) \right)\omega_\sigma (x, \gamma'\setminus x).
\end{gather}
Then, see \eqref{u18},
\begin{gather}
 \label{u20}
\Upsilon_{1,v}^\sigma (\gamma) \leq \int_0^v \int_{\Gamma_*} V_{1,\sigma} (\gamma') p^\sigma_s (\gamma', d \gamma) d s. 
\end{gather}
Now we recall that $\mu_t^\sigma$ lies in $\mathcal{P}_{\rm exp}$ and thus, see Lemma \ref{01lm}, its correlation functions satisfy
\begin{equation}
 \label{u23}
 0\leq k^{(n)}_{\mu^\sigma_t}(x_1, \dots , x_n) \leq \varkappa(t) := \varkappa_{\mu_0} + \|b\|t, 
\end{equation}
where the right-hand side is independent of $\sigma$. By \eqref{u5} and then by \eqref{u18} one can come to the conclusion that proving \eqref{u6} now amounts to estimating the integrals
\begin{equation}
 \label{u24}
 I^\sigma_{j} (t_2-t_1)= \int_{\Gamma_*}\Upsilon^\sigma_{j,t_2-t_1} (\gamma) \mu^\sigma_{t_1} ( d \gamma), \qquad j=1,2,3.
\end{equation}
Indeed, in this setting we have
\begin{eqnarray}
 \label{u24a}
 \widehat{W}^\sigma (t_1, t_2, t_3) &\leq & (t_3 - t_2) \bigg{(} I^\sigma_1 (t_2 - t_1) \\[.2cm] \nonumber & + & \frac{(t_3 - t_2)}{2} I^\sigma_2 (t_2 - t_1) + \frac{(t_3 - t_2)^2}{6} I^\sigma_3 (t_2 - t_1) \bigg{)}\\[.2cm] \nonumber & \leq & (t_3 - t_2)I^\sigma_1 (t_2 - t_1) + \frac{(t_3 - t_1)^2}{2} \bigg{(} I^\sigma_2 (t_2 - t_1) + \frac{t_3 - t_2}{3} I^\sigma_3 (t_2 - t_1)\bigg{)} .
\end{eqnarray}
Note that the second summand in the last line of \eqref{u24a} already has the desired power of $t_3 - t_1$. Thus, it is enough for us just to get $\sigma$-independent bounds for $I^\sigma_2 (t_2 - t_1)$ and $I^\sigma_3 (t_2 - t_1)$. At the same time, $I^\sigma_1 (t_2 - t_1)$ requires a more accurate estimating. 

By the semigroup property it follows that
\begin{equation}
 \label{u25}
 \int_{\Gamma_*} p^\sigma_{s} (\gamma, d \gamma') \mu_{t_1}^\sigma (d \gamma) = \mu_{t_1+s}^\sigma (d \gamma').  
\end{equation}
We take this into account and by \eqref{u24}, \eqref{u19a}, \eqref{u19b} and \eqref{u20} obtain
\begin{eqnarray}
 \label{u25a}
  I^\sigma_{1} (t_2-t_1) & \leq &  \int_0^{t_2-t_1} \int_{\Gamma_*} V_{1,\sigma} (\gamma) \mu_{t_1+s}^\sigma (d \gamma)ds\\[.2cm] \nonumber & = & I^\sigma_{1a} (t_2-t_1) + I^\sigma_{1b} (t_2-t_1) + I^\sigma_{1c} (t_2-t_1),
\end{eqnarray}
where
\begin{eqnarray}
 \label{u26}
 I^\sigma_{1a} (t_2-t_1) & = &
 \|m\|\int_0^{t_2-t_1} \int_{\Gamma_*} V_{\sigma} (\gamma) \mu_{t_1+s}^\sigma (d \gamma)ds \\[.2cm] \nonumber I^\sigma_{1b} (t_2-t_1) & = & 
 \int_0^{t_2-t_1} \int_{\Gamma_*} [V_{\sigma} (\gamma)]^2 \mu_{t_1+s}^\sigma (d \gamma)ds \\[.2cm] \nonumber I^\sigma_{1c} (t_2-t_1) & = & \int_0^{t_2-t_1} \int_{\Gamma_*} \Delta_{\sigma} (\gamma) \mu_{t_1+s}^\sigma (d \gamma)ds. 
\end{eqnarray}
To estimate the integrals in \eqref{u26} we employ the fact that each of the integrands can be presented as $KG$ with an appropriate $G$, and then use \eqref{X12} and \eqref{u23}.
By \eqref{u10} it follows that
\begin{gather}
 \label{u26a}
V_\sigma = K G_\sigma, \quad G^{(0)}_\sigma = \|b\|\langle \psi \rangle, \quad G^{(1)}_\sigma (x)= \|m\|\psi(x), \\[.2cm] \nonumber G^{(2)}_\sigma (x,y) = \psi(x) a_{\sigma}(x,y) + \psi(y) a_{\sigma}(y,x),
\end{gather}
and $G^{(k)}_\sigma \equiv 0$ for $k>2$. Then
\begin{gather}
 \label{u27}
 I^\sigma_{1a} (t_2-t_1) = \|m\| \int_0^{t_2-t_1}\left( \|b\|\langle \psi \rangle + \|m\| \int_X \psi (x) k_{\mu_{t_1+s}^\sigma}^{(1)} (x) d x  \right. \\[.2cm] \nonumber \left. + \int_{X^2} \psi (x) a_\sigma (x,y) k_{\mu_{t_1+s}^\sigma}^{(2)} (x,y) d x d y \right)ds \\[.2cm] \nonumber \leq (t_2-t_1) \|m\| \langle \psi \rangle \bigg{(}   \|b\| + \|m\|  \varkappa(T) +  \langle a \rangle [\varkappa(T)]^2 \bigg{)} := (t_2-t_1) C_{1a}(T). 
\end{gather}
To proceed further, we write 
\[
V_\sigma = K G_\sigma \star G_\sigma =: K H_\sigma, \quad H_\sigma (\eta) = \sum_{\xi_1\subset \eta}  \sum_{\xi_2\subset \eta\setminus \xi_1} G_\sigma (\xi_1\cup \xi_2)G_\sigma (\eta\setminus \xi_2),
\]
see \eqref{X31}, which by \eqref{u26a} yields
\begin{gather*}
 H^{(0)}_\sigma =\left[\|b\|\langle \psi \rangle \right]^2, \quad H^{(1)}_\sigma (x) = 2\|b\|\|m\|  \langle \psi \rangle \psi(x) +[\|m\|\psi(x) ]^2, \\[.2cm] \nonumber   H^{(2)}_\sigma (x_1, x_2) = 2  \|b\|\langle \psi \rangle  G^{(2)}_\sigma (x_1, x_2) + 2  \|m\|^2 \psi(x_1) \psi(x_2)\\[.2cm] \nonumber +2  \|m\| (\psi(x_1) + \psi(x_2)) G^{(2)}_\sigma (x_1, x_2) + [G^{(2)}_\sigma (x_1, x_2)]^2, \\[.2cm] \nonumber H^{(3)}_\sigma (x_1, x_2, x_3) = 2 \|m\| \psi(x_1) G^{(2)}_\sigma (x_2, x_3) + 2 \|m\| \psi(x_2) G^{(2)}_\sigma (x_1, x_3)\\[.2cm] \nonumber + 2 \|m\| \psi(x_3) G^{(2)}_\sigma (x_1, x_2) + 2 G^{(2)}_\sigma (x_1, x_2)G^{(2)}_\sigma (x_1, x_3)   \\[.2cm] \nonumber + 2 G^{(2)}_\sigma (x_1, x_2)G^{(2)}_\sigma (x_2, x_3) + 2 G^{(2)}_\sigma (x_1, x_3)G^{(2)}_\sigma (x_2, x_3)  ,
\end{gather*}
and
\begin{gather*}
 H^{(4)}_\sigma (x_1, x_2, x_3, x_4) =  G^{(2)}_\sigma (x_1, x_2)G^{(2)}_\sigma (x_3, x_4) +  G^{(2)}_\sigma (x_1, x_3)G^{(2)}_\sigma (x_2, x_4)\\[.2cm] \nonumber +  G^{(2)}_\sigma (x_1, x_4)G^{(2)}_\sigma (x_2, x_3).
 \end{gather*}
Then similarly as in \eqref{u27} by means of the estimates $G^{(2)}_\sigma (x_1, x_2) \leq 2 \|a\|$ and
\[
 \int_{X^2} G^{(2)}_\sigma (x_1, x_2) d x_1 d x_2 \leq 2  \int_{X^2} \psi(x_1) a(x_1- x_2) d x_1 d x_2 = 2\langle \psi\rangle  \langle a\rangle ,
\]
\[
 \int_{X^3} G^{(2)}_\sigma (x, y) G^{(2)}_\sigma (y ,z) d x d y dz \leq 4  \langle a \rangle^2  \langle \psi \rangle, 
\]
we get
\begin{eqnarray}
 \label{u29}
 I^\sigma_{1b} (t_2 - t_1) & = & \int_0^{t_2-t_1} \int_{\Gamma_{\rm fin}} H_\sigma (\eta)  k_{\mu^\sigma_{t_1+s}}(\eta) \lambda ( d \eta) \\[.2cm] \nonumber
 & \leq & (t_2 - t_1) \bigg{(} \left(\|b\|\langle \psi\rangle \right)^2 + \|m\| \langle \psi\rangle  \left(2 \|b\| \langle \psi\rangle +\|m\| \right) \varkappa(T)\\[.2cm] \nonumber & + &  \langle \psi \rangle \left( 2 \|b\| \langle a \rangle\langle \psi \rangle + \|m\|^2 \langle \psi \rangle + 4 \|m\| \langle a \rangle + 2 \|a\| \langle a \rangle\right)[\varkappa(T)]^2 \\[.2cm] \nonumber & + & 2 \langle \psi \rangle \langle a \rangle \left( \|m\| \langle \psi \rangle + 2 \langle a \rangle  \right) [\varkappa(T)]^3 + \frac{1}{2} \left(\langle \psi \rangle \langle a \rangle \right)^2 [\varkappa(T)]^4 \bigg{)} \\[.2cm] \nonumber & =: &(t_2 - t_1) C_{1b}(T).
\end{eqnarray}
By \eqref{u26a}, \eqref{u19c} and \eqref{u19b} one can write
\begin{gather*}
 \Delta_\sigma(\gamma) = \int_X b_\sigma (x) \sum_{y\in \gamma} G^{(2)}_\sigma (x,y) dx + \sum_{x\in \gamma} m_\sigma (x) \sum_{y\in \gamma\setminus x} G^{(2)}_\sigma (x,y)\\[.2cm] \nonumber + \sum_{x\in \gamma}\sum_{y\in \gamma\setminus x} a_\sigma (x,y) G^{(2)}_\sigma (x,y) + \sum_{x\in \gamma}\sum_{y\in \gamma\setminus x} \sum_{z\in \gamma\setminus \{x,y\}} a_\sigma (x,y) G^{(2)}_\sigma (x,z).
\end{gather*}
Then similarly as in \eqref{u29} one obtains
\begin{eqnarray}
 \label{u31}
 I^\sigma_{1c} (t_2 - t_1) & \leq & (t_2 - t_1) 2 \langle \psi \rangle \langle a \rangle \varkappa(t_2) \bigg{(}\|b\| + [\|m\|+ \|a\|]\varkappa(T) + \langle a \rangle [\varkappa(T)]^2 \bigg{)}\qquad \\[.2cm] \nonumber & =: & (t_2 - t_1)C_{1c}(T).
\end{eqnarray}
By \eqref{u25a}, \eqref{u27}, \eqref{u29} and \eqref{u31} we then get
\begin{gather}
 \label{u32}
 I^\sigma_{1} (t_2 - t_1)  \leq (t_2 - t_1) (C_{1a}(T)+ C_{1b} (T) + C_{1c}(T)) =: (t_2 - t_1) C_{1}(T).
\end{gather}
Now similarly as in \eqref{u24} and \eqref{u18}, \eqref{u13} we get
\begin{eqnarray}
 \label{u33}
 I^\sigma_2 (t_2-t_1)  & = & \int_{\Gamma_*^2} \rho (\gamma, \gamma') \widetilde{V}_\sigma (\gamma') p^\sigma_{t_2-t_1}(\gamma, d \gamma') \mu_{t_1}^\sigma (d\gamma) \\[.2cm] \nonumber & \leq &  \int_{\Gamma_*} \widetilde{V}_\sigma (\gamma) \mu_{t_2}^\sigma (d\gamma)\leq \|b\|\langle \psi \rangle \left(\|m\| + 2\langle a \rangle \varkappa(T) \right) =: C_2(T), 
\end{eqnarray}
where we used \eqref{u25} and the fact that $\rho(\gamma, \gamma') \leq 1$, see \eqref{rho}, and the estimate
\[
 \int_X f_\sigma (x) d x\leq \langle \psi \rangle \langle a \rangle, 
\]
see \eqref{u14}. 
Similarly,
\begin{gather}
 \label{u34}
I^\sigma_3 (t_2-t_1) = C_V \int_{\Gamma_*^2} \rho (\gamma, \gamma')  p^\sigma_{t_2-t_1}(\gamma, d \gamma') \mu_{t_1}^\sigma (d\gamma) \leq C_V.
 \end{gather}
Now we employ \eqref{u34}, \eqref{u33} and \eqref{u32} in \eqref{u24a} and thereby come to the conclusion that the desired inequality in \eqref{u6} holds true with
\[
 C(T) = \frac{1}{4}C_1(T) + \frac{1}{2}C_2(T) + \frac{T}{6}C_V, 
 \]
which completes the proof.
\end{proof}

\subsection{The weak convergence}

This subsection aims to prove that the aforementioned states $\hat{\mu}^\sigma_t=\mu^\sigma_t$, see Lemma \ref{CPlm}, weakly converge to the corresponding states $\mu_t$ which solve the Fokker-Planck equation for  $(L,\mathcal{F}, \mu_0)$. By this, we also demonstrate that condition (a) of Proposition \ref{u1pn} is met.
We begin by proving the following lemma in which $\alpha_t$ is as in \eqref{f4a}. Recall also that the Banach spaces $\mathcal{G}_{\alpha}$ are defined in \eqref{Y6}.
\begin{lemma}
 \label{z1lm}
For some $\alpha_0$ and  all $\sigma \in (0,1]$, let $\mu_0$ and $\mu^\sigma_0$ be in $\mathcal{P}^{\alpha_0}_{\rm exp}$. Assume also that   
\begin{equation}
 \label{z6x}
  \forall G \in \mathcal{G}_{{\alpha}_0} \qquad \langle\! \langle k_{\mu^\sigma_0}, G \rangle \! \rangle \to \langle\! \langle k_{\mu_0}, G \rangle \! \rangle  \qquad {\rm as} \ \ \sigma \to 0.
\end{equation}
Next, for these $\mu^\sigma_0$ and $\mu_0$, let $\mu^\sigma_t$ and $\mu_t$ be the unique solutions of the Fokker-Planck equation for $(L_\sigma, \mathcal{F}, \mu_0^\sigma)$ and $(L, \mathcal{F}, \mu_0)$, respectively. Then for each $t>0$, one finds $\tilde{\alpha}_t \geq \alpha_t$ such that the  following holds
\begin{equation}
 \label{z6}
 \forall G \in \mathcal{G}_{\tilde{\alpha}_t} \qquad \langle\! \langle k_{\mu^\sigma_t}, G \rangle \! \rangle \to \langle\! \langle k_{\mu_t}, G \rangle \! \rangle  \qquad {\rm as} \ \ \sigma \to 0. 
\end{equation}
\end{lemma}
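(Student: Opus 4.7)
My plan is to exploit the duality \eqref{X58a} between $Q_{\alpha\alpha'}$ and $\breve{Q}_{\alpha'\alpha}$, transferring the problem to the strong convergence of the dual evolution $\breve{Q}^\sigma \to \breve{Q}$ on the test-function side, and then to iterate over small time intervals so as to cover all $t > 0$. Fix $\alpha_* > \alpha_0$ and let $t < T(\alpha_*,\alpha_0)$. Since $\mu_0, \mu_0^\sigma \in \mathcal{P}^{\alpha_0}_{\rm exp}$, both correlation functions lie in $\mathcal{K}_{\alpha_0}$ with $\|k_{\mu_0}\|_{\alpha_0}, \|k_{\mu_0^\sigma}\|_{\alpha_0} \leq 1$. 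By Proposition \ref{X5pn} (for $L^\Delta$) and its counterpart for $L^\Delta_\sigma$ of \eqref{f4}, we have $k_{\mu_t} = Q_{\alpha_*\alpha_0}(t) k_{\mu_0}$ and $k_{\mu^\sigma_t} = Q^\sigma_{\alpha_*\alpha_0}(t) k_{\mu^\sigma_0}$. Any $G\in \mathcal{G}_\alpha$ can be viewed as an element of $\mathcal{G}_{\alpha_*}$ by \eqref{Y7}, and the duality gives the splitting
\[
\langle\!\langle k_{\mu^\sigma_t} - k_{\mu_t}, G\rangle\!\rangle \;=\; \langle\!\langle k_{\mu^\sigma_0} - k_{\mu_0}, \breve{Q}_{\alpha_0\alpha_*}(t) G\rangle\!\rangle \;+\; \langle\!\langle k_{\mu^\sigma_0}, (\breve{Q}^\sigma - \breve{Q})_{\alpha_0\alpha_*}(t) G\rangle\!\rangle.
\]
The first summand vanishes in the limit $\sigma \to 0$ by applying the hypothesis \eqref{z6x} to the specific element $G' := \breve{Q}_{\alpha_0\alpha_*}(t) G \in \mathcal{G}_{\alpha_0}$, cf. \eqref{X58b}.

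To handle the second summand, I would prove that $(\breve{Q}^\sigma - \breve{Q})_{\alpha_0\alpha_*}(t) G \to 0$ in $\mathcal{G}_{\alpha_0}$ for every $G \in \mathcal{G}_{\alpha_*}$. Because $b_\sigma \leq b$, $m_\sigma \leq m$ and $a_\sigma(x,y) \leq a(x-y)$ pointwise by \eqref{z3}, the operator-norm bounds \eqref{X40} and \eqref{X50} hold with the same right-hand sides for the auxiliary models as well, so the series representation \eqref{X51}, transposed through \eqref{X58a}, converges \emph{uniformly in $\sigma$} on $[0, T(\alpha_* - \delta, \alpha_0)]$ for any $\delta \in (0, \alpha_* - \alpha_0)$. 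It then suffices to verify term-by-term convergence. The constituent factor $\breve{S}^\sigma(t) G(\eta) = e^{-t E_\sigma(\eta)} G(\eta)$ converges pointwise to $\breve{S}(t) G(\eta)$ since $E_\sigma(\eta) \to E(\eta)$, cf. \eqref{f} and \eqref{X37}, and $|\breve{S}^\sigma(t) G| \leq |G|$, whence dominated convergence yields $\mathcal{G}_\alpha$-norm convergence. An entirely analogous pointwise-plus-domination argument handles $\breve{B}^\sigma$. Together with the uniform bound $\|k_{\mu^\sigma_0}\|_{\alpha_0} \leq 1$, this disposes of the second summand, establishing \eqref{z6} for $t < T(\alpha_*, \alpha_0)$.

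Finally, I would iterate. Lemma \ref{01lm} applied uniformly to the auxiliary models gives $\mu^\sigma_t, \mu_t \in \mathcal{P}^{\alpha(t)}_{\rm exp}$ with $e^{\alpha(t)} = \varkappa_{\mu_0} + \|b\|t$ independent of $\sigma$, so the pair $(\mu^\sigma_{t_1}, \mu_{t_1})$ satisfies the hypotheses of the lemma at time $t_1$. Choosing $t_1 < T(\alpha_0 + 1, \alpha_0)$ and restarting at $t_1$, Steps 1--2 extend \eqref{z6} to $[t_1, t_1 + t_2]$ with $t_2 < T(\alpha(t_1) + 1, \alpha(t_1))$. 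The resulting step sizes satisfy the same recursion \eqref{Wie4} used in the uniqueness proof of Theorem \ref{1tm}, whose associated series diverges; hence iteration reaches every $t > 0$. The main obstacle is Step 2: propagating the pointwise/dominated convergence $L_\sigma^\Delta \to L^\Delta$ through the iterated multi-integral structure of \eqref{X51} while retaining $\sigma$-uniform norm control. This is resolved exactly by combining the $\sigma$-independent bound \eqref{X50} (permitting term-by-term passage to the limit in the series) with the elementary pointwise convergence of the building blocks $\breve{S}^\sigma$ and $\breve{B}^\sigma$.
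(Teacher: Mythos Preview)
Your proof is correct and reaches the same conclusion, but the route differs from the paper's in a meaningful way.

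The paper works on the $k$-side via a Duhamel (variation-of-constants) identity: writing
\[
k_{t+s}-k^\sigma_{t+s}
 = Q_{\bar\alpha_t\alpha_t}(s)\bigl(k_t-k^\sigma_t\bigr)
 + \int_0^s Q_{\bar\alpha_t\alpha_2}(s-u)\,\widetilde{L}^{\Delta,\sigma}_{\alpha_2\alpha_1}\,k^\sigma_{t+u}\,du,
\]
with $\widetilde{L}^{\Delta,\sigma}=L^\Delta-L^\Delta_\sigma$, pairs this with $G$, and then breaks the residual term $\Upsilon_\sigma(s)$ into four explicit integrals over $\Gamma_{\rm fin}$ (for $\tilde b_\sigma$, $m\tilde\psi_\sigma$, $\tilde a_\sigma$ in $E$, and $\tilde a_\sigma$ in the integral term), each shown to vanish by monotone convergence since the modified coefficients decrease to zero. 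You instead stay on the dual $G$-side and compare the two Dyson-type series for $\breve Q^\sigma$ and $\breve Q$ directly, using the $\sigma$-uniform bound \eqref{X50} to justify passage to the limit term by term, and dominated convergence for the building blocks $\breve S^\sigma$ and $\breve B^\sigma$.

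Both arguments hinge on the same two facts: (i) all operator norms are bounded by the $\sigma$-independent right-hand side of \eqref{X40}, and (ii) $b_\sigma\uparrow b$, $m_\sigma\uparrow m$, $a_\sigma\uparrow a$ pointwise. The paper's Duhamel route isolates the difference of generators in a single place and gives very explicit error estimates without needing to discuss convergence of iterated compositions; your series route is conceptually cleaner but requires the additional (standard) remark that strong convergence of uniformly bounded factors propagates through finite products, so that each term of \eqref{X51} converges before the uniformly convergent summation is invoked. Your iteration, with step size $T(\alpha(t_n)+1,\alpha(t_n))$, indeed reproduces the recursion of \eqref{Wie1}--\eqref{Wie4} and therefore exhausts $\mathds{R}_+$; the paper uses a slightly more elaborate choice of intermediate indices $\alpha_1,\alpha_2$ via \eqref{z12}, \eqref{z13}, dictated by the Duhamel structure, but the divergence argument is the same.
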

\begin{proof}
Recall that $\alpha_t = \ln(e^{\alpha_0} + \|b\|t)$, see \eqref{f4a}. Also, by Proposition \ref{X6pn} 
it follows that $k_{\mu_t} = k_t$, where the latter solves the Cauchy problem in \eqref{X42}, and hence 
\begin{equation}
 \label{z6a}
 k_{\mu_{t+s}} = k_{t+s} = Q_{\alpha \alpha_t} (s) k_t, \qquad s < T(\alpha, \alpha_t), \quad \alpha > \alpha_t.
\end{equation}
A similar representation also holds for $k_{\mu^\sigma_{t+s}} = k^\sigma_{t+s}$, see \eqref{f3}, \eqref{f4}. 

Assume that the convergence stated in \eqref{z6} holds for a given $t\geq 0$, see \eqref{z6x}. Let us then prove that there exists a possibly $t$-dependent  $s_0>0$ such that this convergence holds also for $t+s$ with $s\leq s_0$. 
Write, cf. \eqref{z6a},
\begin{equation*}
 k_{t+s} - k^\sigma_{t+s} = Q_{\bar{\alpha}_t \alpha_t} (s)k_t - Q^\sigma_{\bar{\alpha}_t \alpha_t} (s) k^\sigma_t, 
\end{equation*}
where $\bar{\alpha}_t = {\alpha}_t + 1$ and the equality is written in $\mathcal{K}_{\bar{\alpha}_t}$. In view of \eqref{X52}, we then obtain, cf. \eqref{0Cb},
\begin{eqnarray}
 \label{z8}
 k_{t+s} - k^\sigma_{t+s}& = & Q_{\bar{\alpha}_t \alpha_t} (s)(k_t - k^\sigma_t) - \left(\int_0^s \frac{d}{du} [Q_{\bar{\alpha}_t \alpha_1}(s-u)Q^\sigma_{{\alpha}_1 \alpha_t}(u)] d u \right)k^\sigma_t \\[.2cm] \nonumber & = & Q_{\bar{\alpha}_t \alpha_t}(s) (k_t - k^\sigma_t) + \int_0^s Q_{\bar{\alpha}_t \alpha_2} (s-u)L^\Delta_{\alpha_2\alpha_1}Q^\sigma_{{\alpha}_1 \alpha_t}(u) k^\sigma_t du \\[.2cm] \nonumber & - & \int_0^s Q_{\bar{\alpha}_t \alpha_2} (s-u)(L^{\Delta}_\sigma)_{\alpha_2\alpha_1}Q^\sigma_{{\alpha}_1 \alpha_t}(u) k^\sigma_t du \\[.2cm] \nonumber & = & Q_{\bar{\alpha}_t \alpha_t}(s) (k_t - k^\sigma_t) +\int_0^s Q_{\bar{\alpha}_t \alpha_2} (s-u)\widetilde{L}^{\Delta,\sigma}_{\alpha_2\alpha_1}Q^\sigma_{{\alpha}_1 \alpha_t}(u)  k^\sigma_t du
\end{eqnarray}
where, see \eqref{X38}, \eqref{z3} and \eqref{f},
\begin{gather}
 \label{z9}
 \widetilde{L}^{\Delta,\sigma}_{\alpha'\alpha} k(\eta) := [L^{\Delta}_{\alpha'\alpha} - (L^{\Delta}_{\sigma})_{\alpha'\alpha}  ]k(\eta)  = \sum_{x\in \eta} \tilde{b}_\sigma (x) k(\eta \setminus x) \\[.2cm] \nonumber - \widetilde{E}_\sigma (\eta) k(\eta) - \int_X \sum_{y\in \eta} \tilde{a}_\sigma(x,y) k(\eta \cup x) dx,  
\end{gather}
and
\begin{eqnarray}
 \label{z10}
 & & \tilde{b}_\sigma (x) = b(x) - b_\sigma (x) = b(x)\tilde{\psi}_\sigma(x), \qquad \tilde{\psi}_\sigma(x) = \frac{\sigma |x|^{d+1}}{1+\sigma |x|^{d+1}}\\[.2cm] \nonumber & & \tilde{a}_\sigma (x,y)  = a(x-y)[1 - \psi_\sigma (x)\psi_\sigma (y)] ,  \\[.2cm] \nonumber & & \widetilde{E}_\sigma (\eta) = \sum_{x\in \eta} m(x) \tilde{\psi}_\sigma (x) + \sum_{x\in \eta} \sum_{y\in \eta \setminus x} \tilde{a}_\sigma(x,y). 
 \end{eqnarray}
In the last line in \eqref{z8}, $\alpha_1\in (\alpha_t, \bar{\alpha}_t)$ and  $\alpha_2\in (\alpha_1, \bar{\alpha}_t)$ should be such that 
\begin{equation*}
s < \min\left\{ T(\bar{\alpha}_t, \alpha_2); T({\alpha}_1, \alpha_t) \right\}.
\end{equation*}
Set $\alpha_1 = \alpha_t + \varepsilon$ and find $\varepsilon \in (0,1)$ from the condition
\begin{equation}
 \label{z12}
 T({\alpha}_t + \varepsilon, \alpha_t)  =  T({\alpha}_t+1 , \alpha_t + \varepsilon).
\end{equation}
Recall that $\alpha_t = \ln (e^{\alpha_0} + \|b\|t)$. The equation in \eqref{z12}
has a unique solution which defines a continuous decreasing function $\varepsilon: \mathds{R}_{+} \to (0,1)$ such that $\lim_{t \to +\infty}\varepsilon (t) = \varepsilon_* \in (0,1)$, where the latter is a unique solution of the equation $\varepsilon = (1- \varepsilon ) e^{-(1- \varepsilon )}$. Then we define 
\begin{equation}
 \label{z13}
 \upsilon (t) = T({\alpha}_t + \varepsilon (t), \alpha_t), 
\end{equation}
take some $\epsilon \in (0,1)$, and set
\begin{equation}
 \label{z14}
 s_0 = \epsilon \upsilon (t). 
\end{equation}
It is possible to show that 
\begin{equation}
 \label{z14a}
 \alpha_{t+\upsilon (t)} < \alpha_t + \varepsilon (t), \quad {\rm hence} \quad k_{t+s} \in \mathcal{K}_{\alpha_{t+\upsilon (t)}} \subset \mathcal{K}_{\alpha_1} , \ \ {\rm for}  \ \ s\leq s_0.
\end{equation}
As the map $\alpha \mapsto T(\alpha', \alpha)$ is continuous, see \eqref{X46a}, one finds $\alpha_2 \in (\alpha_t + \varepsilon (t), \alpha_t + 1)$ such that $s_0 < T (\alpha_t + 1, \alpha_2)$. Thus, for the chosen in this way $\alpha_1$ and $\alpha_2$, the operators $Q_{\bar{\alpha}_t \alpha_2} (s-u)$ and $Q^\sigma_{{\alpha}_1 \alpha_t} (u)$  in the last line of \eqref{z8} make sense for all $s\leq s_0$ and $u\leq s$. Now we take $G \in \mathcal{G}_{\bar{\alpha}_t}$ and set $G_s = \breve{Q}_{\alpha_2  \bar{\alpha}_t}(s) G$, $s\leq s_0$, see \eqref{X58a}. This $G_s$ lies in $\mathcal{G}_{\alpha_2} \subset \mathcal{G}_{\alpha_t}$, cf. \eqref{Y7}, and then get from \eqref{z8} the following formula
\begin{gather}
 \label{z15}
 \langle \! \langle k_{t+s} - k^\sigma_{t+s}, G \rangle \! \rangle = \langle \! \langle k_{t} - k^\sigma_{t}, G_s \rangle \! \rangle  + \Upsilon_\sigma (s), \\[.2cm] \nonumber
 \Upsilon_\sigma (s) := \int_0^s \langle \! \langle \widetilde{L}^{\Delta,\sigma}_{\alpha_2\alpha_1} k^\sigma_{t+u}, G_{s-u} \rangle \! \rangle du. 
\end{gather}
Our aim now is to prove that $\Upsilon_\sigma (s) \to 0$ as $\sigma \to 0$. In view of \eqref{z9} and \eqref{z10}, we then write
\begin{eqnarray}
 \label{z16}
 \Upsilon_\sigma (s) & = & \Upsilon^{(1)}_\sigma (s) + \Upsilon^{(2)}_\sigma (s) + \Upsilon^{(3)}_\sigma (s)+ \Upsilon^{(4)}_\sigma (s), \\[.2cm] \nonumber
\Upsilon^{(1)}_\sigma (s) & = &  \int_0^s \left(\int_{\Gamma_{\rm fin}}\sum_{x\in \xi} \tilde{b}_\sigma (x) k^\sigma_{t+u} (\xi \setminus x) G_{s-u} (\xi)  \lambda ( d \xi) \right) d u, \\[.2cm] \nonumber
\Upsilon^{(2)}_\sigma (s) & = & - \int_0^s \left(\int_{\Gamma_{\rm fin}} \sum_{x\in \xi} m(x) \tilde{\psi}_\sigma (x) k^\sigma_{t+u} (\xi)G_{s-u} (\xi)  \lambda ( d \xi) \right) d u, 
\end{eqnarray}
and
\begin{eqnarray}
 \label{z17}
\Upsilon^{(3)}_\sigma (s) & = & - \int_0^s \left(\int_{\Gamma_{\rm fin}} \sum_{x\in \xi} \sum_{y\in \xi \setminus x} \tilde{a}_\sigma (x,y) k^\sigma_{t+u} (\xi)G_{s-u} (\xi)  \lambda ( d \xi) \right) d u
\\[.2cm] \nonumber  \Upsilon^{(4)}_\sigma (s) & = & - \int_0^s \left(\int_{\Gamma_{\rm fin}} \int_X \sum_{y\in \xi}  \tilde{a}_\sigma (x,y) k^\sigma_{t+u} (\xi\cup x)G_{s-u} (\xi) dx \lambda ( d \xi) \right) d u.
\end{eqnarray}
By Lemma \ref{01lm} it follows that $k^\sigma_{t+u}(\xi) \leq e^{\alpha_{t+u}|\xi|} \leq e^{\alpha_{1}|\xi|}$, see \eqref{z14a}. We take this into account in \eqref{z16}, \eqref{z17} and then get
\begin{gather}
 \label{z18}
 |\Upsilon^{(i)}_\sigma (s)| \leq  \int_0^s \int_{\Gamma_{\rm fin}} \widetilde{H}^{(i)}_\sigma (\xi)|G_{s-u}(\xi)| \lambda (d\xi) du, \qquad i=1,2,3,4.
\end{gather}
Here, see \eqref{z10},
\begin{eqnarray}
 \label{z18a}
 \widetilde{H}^{(1)}_\sigma (\xi)  & = & e^{\alpha_1 |\xi| - \alpha_1}  \sum_{x\in \xi} \tilde{\psi}_\sigma(x) b(x) \leq e^{- \alpha_1}\|b\| |\xi|e^{\alpha_1|\xi|}=: C^{(1)} |\xi|e^{\alpha_1 |\xi|}, \\[.2cm] \nonumber \widetilde{H}^{(2)}_\sigma (\xi)  & = & e^{\alpha_1 |\xi|}  \sum_{x\in \xi} \tilde{\psi}_\sigma(x) m(x) \leq \|m\| |\xi|e^{\alpha_1|\xi|}=: C^{(2)} |\xi|e^{\alpha_1 |\xi|}, \\[.2cm] \nonumber \widetilde{H}^{(3)}_\sigma (\xi)  & = & e^{\alpha_1 |\xi|}  \sum_{x\in \xi} \sum_{y\in \xi\setminus x} \tilde{a}_\sigma (x,y) \leq \|a\| |\xi|^2 e^{\alpha_1|\xi|}=: C^{(3)} |\xi|^2e^{\alpha_1 |\xi|}, \\[.2cm] \nonumber \widetilde{H}^{(4)}_\sigma (\xi)  & = & e^{\alpha_1 |\xi| + \alpha_1} \int_X  \sum_{y\in \xi} \tilde{a}_\sigma (x,y) dx \leq e^{\alpha_1} \langle a \rangle |\xi| e^{\alpha_1|\xi|} =: C^{(4)} |\xi|e^{\alpha_1 |\xi|}.
\end{eqnarray}
By the elementary inequality $se^{-\beta s} \leq 1/\beta e$, $s, \beta >0$, we  obtain
\begin{eqnarray}
 \label{z20}
 {\rm RHS \eqref{z18}} & \leq & \frac{C^{(i)}}{e(\alpha_2 - \alpha_1)} \int_0^s \int_{\Gamma_{\rm fin}} e^{\alpha_2 |\xi|}|G_{s-u}(\xi)| \lambda (d\xi) du\\[.2cm] & = & \frac{C^{(i)}}{e(\alpha_2 - \alpha_1)} \int_0^s|G_{s-u}|_{\alpha_2} du  \nonumber \\[.2cm] \nonumber & \leq & \frac{s C^{(i)}}{e(\alpha_2 - \alpha_1)} \frac{T(\bar{\alpha}_t, \alpha_2) |G|_{\bar{\alpha}_t}}{T(\bar{\alpha}_t, \alpha_2) - s_0}, \qquad i=1,2,4, \nonumber
\end{eqnarray}
where we have used also \eqref{X58b}. For $i=3$, by the same calculations we have
\begin{gather}
 \label{z23}
   {\rm RHS \eqref{z18}} \leq  \frac{4s \|a\|}{[e(\alpha_2 - \alpha_1)]^2} \frac{T(\bar{\alpha}_t, \alpha_2) |G|_{\bar{\alpha}_t}}{T(\bar{\alpha}_t, \alpha_2) - s_0}.
\end{gather}
In view of \eqref{z10}, each of $\widetilde{H}^{(i)}_\sigma (\xi)$, $i=1, \dots , 4$ decreases to zero as $\sigma \to 0$. By \eqref{z18}, \eqref{z20}, \eqref{z23} and the monotone convergence theorem this yields  
\begin{equation}
 \label{z21}
 \Upsilon^{(i)}_\sigma (s) \to 0,  \qquad {\rm as} \quad  \sigma \to 0, 
\end{equation}
holding for all $i=1, \dots , 4$.
By \eqref{z21} and \eqref{z15},   we prolong  
the convergence as in \eqref{z6} from $t$ to $t+s$, $s\leq s_0$, see \eqref{z14}. Our aim now is to prolong it ad infinitum. Define
\begin{equation}
 \label{z25}
 t_l = t_{l-1} + s_{l}, \quad t_0 = 0, \quad s_{l} = \epsilon \upsilon (t_{l-1}), \quad l \in \mathds{N}.  
\end{equation}
Since $k_0 = k^\sigma_0 = k_{\mu_0}$, the arguments developed above yield the stated convergence for $t\leq \sup_{l} t_l = \lim_{l\to +\infty} t_l$.
Then, to complete the proof, we have to show that $t_l \to + \infty$.  Assume that   $\sup_{l} t_l = t_* < +\infty$.  By \eqref{z25} it follows that $t_l = s_1 +\cdots + s_l$; hence, $s_l \to 0$ in this case. The function $\upsilon (t)$ defined in \eqref{z13} is continuous, for both $\varepsilon (t)$ and $\alpha_t$ being continuous. By \eqref{z25} we would then  have 
\[
 \upsilon (t_*) = \frac{\varepsilon (t_*)}{\|b\|e^{-\alpha_{t_*}} + \langle a \rangle e^{\alpha_{t_*}+ \varepsilon (t_*)}}= 0, 
\]
which is obviously impossible. This completes the proof.
 \end{proof}
 The result proved above enables us to achieve the primary objective of this subsection.
\begin{lemma}
 \label{z2lm} 
Let $\mu_0$, $\mu^\sigma_0$, $\mu_t$ and $\mu^\sigma_t$ be as in Lemma \ref{z1lm}. Then for each $t\geq 0$, it follows that $\mu^\sigma_t \Rightarrow \mu_t$ as $\sigma \to 0$, where we mean the weak convergence of measures on $\Gamma_*$. 
\end{lemma}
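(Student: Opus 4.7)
The plan is to deduce the weak convergence $\mu^\sigma_t \Rightarrow \mu_t$ from the correlation-function convergence already established in Lemma \ref{z1lm} by testing against a sufficiently rich family of bounded continuous functions. The natural candidate is the set $\widetilde{\mathcal{F}}=\{\widetilde{F}_v : v \in \mathcal{V}\}$ of Proposition \ref{G1pn}, which is weak-convergence determining in $\mathcal{P}(\Gamma_*)$. Hence it suffices to show
\[
\mu^\sigma_t(\widetilde{F}_v) \longrightarrow \mu_t(\widetilde{F}_v), \qquad \sigma \to 0, \qquad v \in \mathcal{V}.
\]

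To realize this reduction I would first invoke claim (b) of Proposition \ref{G2pn}, by which $\widetilde{\mathcal{F}} \subset \mathcal{F}_{\rm max}$. More precisely, by \eqref{G2} one has the identity $\widetilde{F}_v = K e(\cdot; h_v)$ with $h_v(x) = e^{-v(x)\psi(x)}-1$, and the bound \eqref{G3} shows that $e(\cdot; h_v) \in \mathcal{G}_\alpha$ for every $\alpha \in \mathds{R}$. By Theorem \ref{1tm}(c) together with Remark \ref{g1rk}, both $\mu_t$ and $\mu^\sigma_t$ solve the respective Fokker-Planck equations on $\mathcal{F}_{\rm max}$, and therefore by \eqref{X12} we can rewrite the testing integrals in terms of correlation functions:
\[
\mu^\sigma_t(\widetilde{F}_v) = \langle\!\langle k_{\mu^\sigma_t}, e(\cdot; h_v) \rangle\!\rangle, \qquad \mu_t(\widetilde{F}_v) = \langle\!\langle k_{\mu_t}, e(\cdot; h_v) \rangle\!\rangle.
\]

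With $G = e(\cdot; h_v) \in \mathcal{G}_\alpha$ (for any $\alpha$), Lemma \ref{z1lm} directly yields
\[
\langle\!\langle k_{\mu^\sigma_t}, e(\cdot; h_v) \rangle\!\rangle \longrightarrow \langle\!\langle k_{\mu_t}, e(\cdot; h_v) \rangle\!\rangle, \qquad \sigma \to 0,
\]
for every $v \in \mathcal{V}$. Combining this with the reduction above gives $\mu^\sigma_t(\widetilde{F}_v) \to \mu_t(\widetilde{F}_v)$ for every $\widetilde{F}_v \in \widetilde{\mathcal{F}}$, and then item (ii) of Proposition \ref{G1pn} (strong separation implies weak convergence determining) concludes $\mu^\sigma_t \Rightarrow \mu_t$ in $\mathcal{P}(\Gamma_*)$.

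The argument is short because all the heavy lifting is already contained in Lemma \ref{z1lm}; the only point that needs to be verified with care is that the hypotheses of Lemma \ref{z1lm} are fulfilled in the present application, that is, that the initial data $\mu^\sigma_0$ and $\mu_0$ share a common bound on their type $e^{\alpha_0}$ and that the correlation functions satisfy \eqref{z6x} at $t=0$. For the application in the proof of Theorem \ref{2tm}, one takes $\mu^\sigma_0 = \mu_0 \in \mathcal{P}_{\rm exp}$ for all $\sigma$, in which case both assumptions are trivial, and no further obstacle arises.
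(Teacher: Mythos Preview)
Your proof is correct and follows essentially the same route as the paper: you represent $\widetilde{F}_v = K e(\cdot;h_v)$ with $e(\cdot;h_v)\in\mathcal{G}_\alpha$ via Proposition~\ref{G2pn}, invoke \eqref{X12} and Lemma~\ref{z1lm} to get $\mu^\sigma_t(\widetilde{F}_v)\to\mu_t(\widetilde{F}_v)$, and conclude by the convergence-determining property of $\widetilde{\mathcal{F}}$ from Proposition~\ref{G1pn}(ii). The only addition is your closing remark about the hypotheses at $t=0$, which is a harmless clarification.
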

\begin{proof}
By Proposition \ref{G2pn} it follows that $\widetilde{F}_v = K G_v$ with $G_v\in \mathcal{G}_\alpha$ for an arbitrary $\alpha \in \mathds{R}$. Then we apply \eqref{X12} and obtain by Lemma \ref{z1lm} that
\begin{equation*}
  \mu_t^\sigma (\widetilde{F}_v) = \langle \!\langle k_{\mu_t^\sigma} , G_v\rangle \!\rangle \to \langle \!\langle k_{\mu_t} , G_v\rangle \!\rangle   = \mu_t (\widetilde{F}_v), \quad \sigma \to 0,
\end{equation*}
holding for all $t\geq 0$ and $v\in \mathcal{V}$. Then the proof follows by statement (ii) of Proposition \ref{G1pn}.
\end{proof}

\subsection{The proof of Theorem \ref{2tm}}

Let the marginals of $P^\sigma_{s,\mu}$ be defined as in \eqref{EKP}. 
By Lemmas \ref{z1lm} and \ref{u1lm} both conditions (a) and (b) of Proposition \ref{u1pn} are met. Therefore,
for each $\sigma \in (0,1]$, by statement (i) of the latter, see also Remark \ref{u1rk}, a given measure $\mu \in \mathcal{P}_{\rm exp}$  determines probability measures $P^\sigma_{s,\mu}$, $s\geq 0$, through their marginals given in \eqref{EKP}. In particular, this yields
\begin{equation}
 \label{g44}
 P^\sigma_{s,\mu}\circ \varpi^{-1}_t = S^\sigma (t-s) \mu, \qquad t\geq s.
\end{equation}
For each $\mu\in\mathcal{P}_{\rm exp}$, by Lemma \ref{CPlm} it follows that $S^\sigma (t-s) \mu=\mu_t$ also lies in $\mathcal{P}_{\rm exp}$, and $\varkappa_{\mu_t} \leq \varkappa_\mu + \|b\|(t-s)$.
This yields that the family $\{P^\sigma_{s,\mu}: s\geq 0, \mu \in \mathcal{P}_{\rm exp}\}$ satisfies conditions (a), (b) and (c) of Definition \ref{Cad1df}.  To prove that condition (d) is also met, 
we take $\sf J$ as in \eqref{Cad3} with given fixed $m$, $J_1, \dots J_m$ in $\widetilde{\mathcal{F}}$, see \eqref{G1} and item (d) of Remark \ref{Finrk}, and $s \leq s_1 < s_2 < \cdots < s_m \leq t_1$. Now we take $F\in \widetilde{\mathcal{F}}$ and then write 
\begin{equation}
 \label{g45}
 {\sf F}_u = F\circ \varpi_u, \quad {\sf L}_u = LF\circ \varpi_u, \quad {\sf L}^\sigma_u = L_\sigma F\circ \varpi_u, \quad  u\in [s_m, t_2].
\end{equation}
Define
\begin{eqnarray}
 \label{u35}
\tilde{\varsigma}^\sigma_{1,s_1} = c_{1,\sigma} J_1 \mu^\sigma_{s_1} = c_{1,\sigma} J_1 S^\sigma(s_1-s) \mu, \qquad c_{1,\sigma}^{-1} =  \mu^\sigma_{s_1} (J_1) >0. 
 \end{eqnarray}
The latter inequality follows by the fact that all $J_l$ in $\sf J$ are strictly positive. 
Since $\mu_{s_1}^\sigma\in \mathcal{P}_{\rm exp}^{\alpha_{s_1}}$, $e^{\alpha_{s_1}} =  \varkappa_\mu + \|b\|(s_1-s)$, by item (c) of Remark \ref{X1rk} it follows that $\tilde{\varsigma}^\sigma_{1,s_1}$ is in  $\mathcal{P}_{\rm exp}^{\tilde{\alpha}_{s_1}}$ with $\tilde{\alpha}_{s_1} = \alpha_{s_1} + \alpha_1$, where $\alpha_1\geq 0$ depends only on the choice of $J_1$. Next we set
\begin{gather}
 \label{Ag}
 \varsigma_{2,s_2}^\sigma = S^\sigma(s_2-s_1) \tilde{\varsigma}^\sigma_{1,s_1}, \qquad c_{2,\sigma}^{-1} =  \varsigma_{2,s_2}^\sigma (J_2)>0, \\[.2cm] \nonumber \tilde{\varsigma}^\sigma_{2,s_2} = c_{2,\sigma} J_2 \varsigma_{2,s_2}^\sigma.
\end{gather}
Similarly as above, $\tilde{\varsigma}^\sigma_{2,s_2}\in \mathcal{P}_{\rm exp}^{\tilde{\alpha}_{s_2}}$ with $\tilde{\alpha}_{s_2} = 
\tilde{\alpha}_{s_1} + $

By \eqref{u35} it follows that

and Lemma \ref{CPlm} $\varsigma^\sigma_{s_1}$ solves the Fokker-Planck equation for $(L_\sigma, \mathcal{F}, \varsigma_{s})$, hence also for $(L_\sigma, \mathcal{F}_{\rm max}, \varsigma_{s})$, see statement (c) of Theorem \ref{1tm}, where $\varsigma_{s} = c_1 J_1 \mu$. By Lemma \ref{z2lm} it then follows that $\varsigma^\sigma_{s_1}
\Rightarrow \varsigma_{s_1}\in \mathcal{P}_{\rm exp}^{\tilde{\alpha}_{s_1}}$ as $\sigma \to 0$, where $\varsigma_{s_1}$ is the solution of the Fokker-Planck equation for $(L, \mathcal{F}_{\rm max}, \varsigma_{s})$. 
Now we define recursively
\begin{eqnarray*}
\varsigma^\sigma_{s_l} = c_l J_l  S^\sigma(s_l-s_{l-1}) \varsigma^\sigma_{s_{l-1}} , \qquad l=2, \dots, m, 
 \end{eqnarray*}
where $c_l$ is such that $\varsigma^\sigma_{s_l} $ is a probability measure. Then  $\varsigma^\sigma_{s_l}\in \mathcal{P}_{\rm exp}^{\tilde{\alpha}_{s_l}}$ where $\tilde{\alpha}_{s_l}$ is independent of $\sigma$. 
Also 
\begin{equation}
 \label{u35b}
 \varsigma^\sigma_{s_l} \Rightarrow \varsigma_{s_l}, \qquad \sigma \to 0,\qquad l=2, \dots, m,
\end{equation}
where $\varsigma_{s_l}$ is the solution of the Fokker-Planck equation for $(L, \mathcal{F}_{\rm max}, \varsigma_{s_{l-1}})$.
By \eqref{EKP} and the latter we then conclude that for $u \geq s_m$ the following holds
\begin{gather}
 \label{u36}
 P^\sigma_{s,\mu} ({\sf F}_u {\sf J}) = C_\sigma P^\sigma_{s_m,\varsigma^\sigma_{s_m}} ({\sf F}_u ) = C_\sigma P^\sigma_{s_m,\varsigma^\sigma_{s_m}} (F \circ \varpi_u)= C_\sigma\varsigma^\sigma_u (F),\\[.2cm] \nonumber \varsigma^\sigma_u = S^\sigma (u-s_m) \varsigma^\sigma_{s_m},  \qquad C_\sigma= P^\sigma_{s,\mu} ( {\sf J})>0. 
\end{gather}
By the second line of \eqref{u36} and \eqref{u35b} the map $u \mapsto \varsigma^\sigma_u$ solves the Fokker-Planck for $(L_\sigma, \mathcal{F}_{\rm max}, \varsigma^\sigma_{s_m})$ on the time interval $[s_m, +\infty)$. And also 
\begin{equation}
 \label{u35c}
 \varsigma^\sigma_{u} \Rightarrow \varsigma_{u}, \qquad \sigma \to 0,
\end{equation}
where $\varsigma_{u}$ solves the Fokker-Planck for $(L, \mathcal{F}_{\rm max}, \varsigma_{s_m})$ on the same time interval. Moreover, 
\begin{equation}
 \label{u37c}
 \varsigma^\sigma_u, \varsigma_u \in \mathcal{P}_{\rm exp}^{\tilde{\alpha}_u}, \qquad \tilde{\alpha}_u = \ln[ e^{\tilde{\alpha}_{s_m}} + \|b\|(u- s_m)].
\end{equation}
Thereby, see \eqref{g45}, 
\begin{gather}
 \label{u37}
 P^\sigma_{s,\mu} ({\sf H}) = P^\sigma_{s,\mu} ({\sf F}_{t_2} {\sf J}) - P^\sigma_{s,\mu} ({\sf F}_{t_1} {\sf J}) - \int_{t_1}^{t_2} P^\sigma_{s,\mu} ({\sf L}^\sigma_{u} {\sf J}) d u \\[.2cm] \nonumber = C_\sigma\bigg{(} \varsigma^\sigma_{t_2} (F) - \varsigma^\sigma_{t_1} (F) - \int_{t_1}^{t_2} \varsigma^\sigma_{u} (L_\sigma F)du\bigg{)} = 0.
 \end{gather}
By Lemmas \ref{u1lm} and \ref{z1lm}, and hence by Proposition \ref{u1pn}, the set $\{P^\sigma_{s,\mu} : \sigma \in (0,1] \}$ has accumulations points. Let  
$P_{s,\mu}$ be one of them and $\{\sigma_n\}_{n\in \mathds{N}}$ be such that $\sigma_n \to 0$ and $P^{\sigma_n}_{s,\mu} \Rightarrow P_{s,\mu}$ as $n\to +\infty$. Define
\begin{equation}
 \label{u37a}
 \hat{\varsigma}_u (\mathbb{A}) = C^{-1} P_{s,\mu} ((\mathds{1}_{\mathbb{A}}\circ \varpi_u){\sf J}), \quad C= P_{s,\mu} ({\sf J}), \quad u\in [s_m,t_2], \quad \mathbb{A}\in \mathcal{B}(\Gamma_*). 
\end{equation}
Then $C_{\sigma_n}$ and $\varsigma^{\sigma_n}_u$ defined in \eqref{u36} satisfy, see Lemma \ref{z1lm},
\begin{equation}
 \label{u37b}
 C_{\sigma_n} \to C , \qquad \varsigma^{\sigma_n}_u \Rightarrow \hat{\varsigma}_u = {\varsigma}_u, \qquad n \to +\infty.
\end{equation}
The equality $\hat{\varsigma}_u = {\varsigma}_u$, which follows by \eqref{u35c}, yields that the measure defined in \eqref{u37a} solves the Fokker-Planck for $(L, \mathcal{F}_{\rm max}, \varsigma_{s_m})$ on $[s_m, +\infty)$. In view of \eqref{u37}, one can write 
\begin{gather}
 \label{u38}
 P_{s,\mu} ({\sf H})  =  P_{s,\mu} ({\sf H}) - P^{\sigma_n}_{s,\mu} ({\sf H}) = a_n (t_2) - a_n (t_2) - \int_{t_1}^{t_2} b_n (u) d u - \int_{t_1}^{t_2} c_n (u) d u  , \\[.2cm] \nonumber a_n (u)  =   P_{s,\mu} ({\sf F}_{u} {\sf J}) - P^{\sigma_n}_{s,\mu} ({\sf F}_{u} {\sf J}), \quad b_n (u)  =   P_{s,\mu} ({\sf L}_{u} {\sf J}) - P^{\sigma_n}_{s,\mu} ({\sf L}_{u} {\sf J}), \\[.2cm] \nonumber c_n (u)  = P^{\sigma_n}_{s,\mu} (({\sf L}_{u} - {\sf L}^{\sigma_n}_u)  {\sf J}), \quad u\in [s_m, t_2].
\end{gather}
For fixed $s_i$, $i=1,\dots, m$, and $u\in [s_m, t_2]$, let $\Pi$ and $\Pi_n$ be the $(m+1)$-dimensional marginals of $P_{s,\mu}$ and $P^{\sigma_n}_{s,\mu}$, respectively. Then, see \eqref{Cad3},
\begin{eqnarray*}
P_{s,\mu} ({\sf F}_u {\sf J}) &  = & \int_{\Gamma_*^{m+1}} F(\gamma_{m+1}) J_m (\gamma_{m}) \cdots J_1 (\gamma_{1}) \Pi (d \gamma_{1},\dots , d \gamma_{m}, d \gamma_{m+1}), \\[.2cm] \nonumber P^{\sigma_n}_{s,\mu} ({\sf F}_u {\sf J}) & = & \int_{\Gamma_*^{m+1}} F(\gamma_{m+1}) J_m (\gamma_{m}) \cdots J_1 (\gamma_{1}) \Pi_n (d \gamma_{1},\dots , d \gamma_{m}, d \gamma_{m+1}).        
\end{eqnarray*}
By \cite[Theorem 7.8, page 131]{EK}, $P^{\sigma_n}_{s,\mu} \Rightarrow P_{s,\mu}$ implies $\Pi_n \Rightarrow \Pi$ as $n\to +\infty$. Since all $J_i$ and $F$ are taken from $\widetilde{\mathcal{F}}$, see item (ii) of Proposition \eqref{G1pn}, the latter implies $a_n (u) \to 0$ for each  $u\in [s_m, t_2]$.   

To proceed further, by \eqref{u38},  \eqref{u37a} and  \eqref{u36} we write 
\begin{equation}
 \label{u39}
 b_n(u) = C (\varsigma_u (L F) - \varsigma^{\sigma_n}_u (LF)) + (C-C_{\sigma_n})  \varsigma^{\sigma_n}_u (LF).
\end{equation}
By \eqref{NGa} it follows that $F = KG$ with $G \in \mathcal{G}_\alpha$, holding for each $\alpha \in \mathds{R}$. Then, see \eqref{X38a}, \eqref{X57a} and \eqref{u37c}, we have
\begin{gather}
 \label{u40}
 |\varsigma^{\sigma_n}_u (LF)| = |\langle \! \langle k_{\varsigma^{\sigma_n}_u}, \breve{L}_{\tilde{\alpha}_u \alpha} G \rangle \! \rangle | \leq \int_{\Gamma_{\rm fin}} e^{\tilde{\alpha}_u|\eta|}|\breve{L}_{\tilde{\alpha}_u \alpha} G (\eta)| \lambda ( d \eta) \leq \|\breve{L}_{\tilde{\alpha}_u \alpha}\||G|_\alpha,
\end{gather}
where the operator norm $\|\breve{L}_{\tilde{\alpha}_u \alpha}\|$ satisfies the same estimate as in \eqref{X40}. By \eqref{u37b} the second summand on the right-hand side of \eqref{u39} vanishes as $n \to + \infty$ since the bound in \eqref{u40} is independent of $n$. At the same time, similarly as in \eqref{u40} we have
\begin{gather}
 \label{u41}
 \varsigma_u (LF) -\varsigma^{\sigma_n}_u (LF) = \langle\! \langle k_{\varsigma_u}, \breve{L}_{\tilde{\alpha}_u \alpha} G  \rangle\! \rangle - \langle\! \langle k_{\varsigma^{\sigma_n}_u}, \breve{L}_{\tilde{\alpha}_u \alpha} G  \rangle\! \rangle \to 0, \quad n\to +\infty, 
\end{gather}
which follows by Lemma \ref{z1lm}. Then $b_n(u) \to 0$ as $n\to +\infty$, holding for each $u\in [s_m, t_2]$. Finally, similarly as in \eqref{u36}, see also \eqref{z9}, we have
\begin{gather*}
 c_n (u) = C_{\sigma_n} \varsigma^{\sigma_n}_u ((L-L_\sigma) F)= C_{\sigma_n} \langle\! \langle \widetilde{L}^{\Delta,\sigma_n}_{\alpha\tilde{\alpha}_u } k_{\varsigma^{\sigma_n}_u}, G \rangle\! \rangle,
\end{gather*}
where $G$ is the same as in \eqref{u40}, \eqref{u41}, i.e., such that $F=KG$. Recall that $G\in \mathcal{G}_\alpha$ for each $\alpha \in \mathds{R}$, see \eqref{NGa}. Then similarly as in \eqref{z15}, \eqref{z16} we may write
\begin{eqnarray}
 \label{u43}
 & &\langle\! \langle \widetilde{L}^{\Delta,\sigma_n}_{\alpha\tilde{\alpha}_u } k_{\varsigma^{\sigma_n}_u}, G \rangle\! \rangle = \Upsilon^{(1)}_n + \Upsilon^{(2)}_n +\Upsilon^{(3)}_n + \Upsilon^{(4)}_n,\\[.2cm] \nonumber & & \Upsilon^{(1)}_n = \int_{\Gamma_{\rm fin} } \sum_{x\in \xi}\tilde{b}_{\sigma_n} (x) k_{\varsigma^{\sigma_n}_u} (\xi\setminus x) |G(\xi)| \lambda (  d\xi),\\[.2cm] \nonumber & & \Upsilon^{(2)}_n = \int_{\Gamma_{\rm fin} } \sum_{x\in \xi} m(x) \tilde{\psi}_{\sigma_n}(x) k_{\varsigma^{\sigma_n}_u} (\xi) |G(\xi)| \lambda (  d\xi),\\[.2cm] \nonumber & & \Upsilon^{(3)}_n = \int_{\Gamma_{\rm fin} } \sum_{x\in \xi} \sum_{y\in \xi \setminus x}\tilde{a}_{\sigma_n}(x,y)k_{\varsigma^{\sigma_n}_u} (\xi) |G(\xi)| \lambda (  d\xi),\\[.2cm] \nonumber & & \Upsilon^{(4)}_n = \int_{\Gamma_{\rm fin} } \int_X \sum_{y\in \xi} \tilde{a}_{\sigma_n}(x,y) k_{\varsigma^{\sigma_n}_u} (\xi\cup x) |G(\xi)| \lambda (  d\xi) dx.
\end{eqnarray}
Now, similarly as in \eqref{z18}, \eqref{z18a}, we estimate
\[
 \Upsilon^{(i)}_n \leq  \int_{\Gamma_{\rm fin} } h^{(i)}(\xi)|G(\xi)| \lambda (  d\xi),\qquad i=1,2,3,4,
\]
where 
\begin{gather*}
 h^{(1)}(\xi) = \|b\|e^{-\tilde{\alpha}_u}|\xi| e^{\tilde{\alpha}_u|\xi|} =: c_1 |\xi| e^{\tilde{\alpha}_u|\xi|}, \quad h^{(2)}(\xi) = \|m\||\xi| e^{\tilde{\alpha}_u|\xi|} =: c_2 |\xi| e^{\tilde{\alpha}_u|\xi|}, \\[.2cm] \nonumber h^{(3)}(\xi) = \|a\||\xi|^2 e^{\tilde{\alpha}_u|\xi|} =: c_3 |\xi|^2 e^{\tilde{\alpha}_u|\xi|}, \quad h^{(4)}(\xi) = \langle a\rangle e^{\tilde{\alpha}_u}|\xi| e^{\tilde{\alpha}_u|\xi|} =: c_4 |\xi| e^{\tilde{\alpha}_u|\xi|}.
\end{gather*}
By these estimates, we obtain
\begin{eqnarray}
 \label{u45}
 \Upsilon^{(i)}_n \leq \frac{c_i}{e(\alpha - \tilde{\alpha}_u)}|G|_\alpha, \quad i=1,2,4, \qquad \Upsilon^{(3)}_n \leq \frac{4c_3}{e(\alpha - \tilde{\alpha}_u)^2}|G|_\alpha, 
\end{eqnarray}
holding with an arbitrary $\alpha > \tilde{\alpha}_u$ since 
$G\in \mathcal{G}_\alpha$ for each $\alpha \in \mathds{R}$. By \eqref{u45} each $\Upsilon^{(i)}_n$ is bounded. At the same time, $\tilde{b}_{\sigma_n}(x)$, $\tilde{\psi}_{\sigma_n}(x)$, $\tilde{a}_{\sigma_n}(x,y)$ are monitone functions decreasing to zero. By the monotone convergence theorem one then concludes that each of the summands in the first line of \eqref{u43} converges to zero as $n\to +\infty$, which by \eqref{u38} concludes the proof that $P_{s, \mu}({\sf H})=0$; hence, the accumulation point under discussion solves the restricted martingale problem.     

Assume now that there exists another accumulation point, say $\{P'_{s, \mu}: s\geq 0, \mu \in \mathcal{P}_{\rm exp}\}$, which also solves the restricted martingale problem as we just have shown. Then the one-dimensional marginals of  
$P_{s, \mu}$ and $P'_{s, \mu}$ should coincide, see Remark \ref{Finrk}, due to the uniqueness stated in Theorem \ref{1tm}. Thus, to complete the whole proof, we have to show that all finite-dimensional marginals of these measures coincide; see Definition \ref{Cad1df}. Take $F_1 \in \widetilde{\mathcal{F}}$ and $t_1 > s$ and define the following measures on $\mathfrak{D}_{[t_1, +\infty)}(\Gamma_*)$ by setting
\begin{equation}
 \label{u46}
 Q_{t_1} (\mathbb{A}) = \frac{P_{s,\mu}(\mathds{1}_{\mathbb{A}}F_1\circ \varpi_{t_1})}{P_{s,\mu}(F_1\circ \varpi_{t_1})} , \quad Q'_{t_1} (\mathbb{A}) = \frac{P'_{s,\mu}(\mathds{1}_{\mathbb{A}}F_1\circ \varpi_{t_1})}{P_{s,\mu}(F_1\circ \varpi_{t_1})}.
\end{equation}
Recall that $F_1(\gamma)>0$, see \eqref{G1} and Proposition \ref{G2pn}. By the inductive assumption, it follows that
\begin{equation*}
  Q_{t_1} \circ \pi^{-1}_{t_1} = Q'_{t_1} \circ \pi^{-1}_{t_1} =: \varsigma_{t_1}. 
\end{equation*}
By \eqref{g44} and \eqref{u35} it follows that $\varsigma^\sigma_{t_1} \Rightarrow \varsigma_{t_1}$ as $\sigma \to 0$, where  is as in \eqref{u35} with $J_1 = F_1$. For $t_2 > t_1$ and $F_2 \in \mathcal{F}_{\rm max}$, define, cf. \eqref{Cad1},
\begin{gather*}
 {\sf H}_1 (\bar{\gamma}) = F_2 (\varpi_{t_2} (\bar{\gamma})) - F_2 (\varpi_{t_2} (\bar{\gamma})) - \int_{t_1}^{t_2} (LF_2) (\varpi_{u} (\bar{\gamma})) du \\[.2cm] \nonumber
 {\sf H}_2 (\bar{\gamma}) = {\sf H}_1 (\bar{\gamma}) F_1 (\varpi_{t_1} (\bar{\gamma}))
\end{gather*}
By \eqref{u46} and the assumed properties of $P_{s,\mu}$ and $P'_{s,\mu}$ it follows that
\begin{equation*}
 Q_{t_1} ({\sf H}_1) = \frac{P_{s,\mu} ({\sf H}_2)}{P_{s,\mu}(F_1\circ \varpi_{t_1})} = 0, \qquad Q'_{t_1} ({\sf H}_1) = \frac{P'_{s,\mu} ({\sf H}_2)}{P_{s,\mu}(F_1\circ \varpi_{t_1})} = 0 ,
\end{equation*}
by which both $Q_{t_1}\circ \varpi_{t}^{-1}$ and $Q'_{t_1}\circ \varpi_{t}^{-1}$, $t>t_1$, solve the  
Fokker-Planck equation for $(L, \mathcal{F}_{\rm max}, \varsigma_{t_1})$ on the time interval $[t_1, +\infty)$. By Theorem \ref{1tm} we then conclude that $Q_{t_1}\circ \varpi_{t}^{-1}=Q'_{t_1}\circ \varpi_{t}^{-1} \in \mathcal{P}_{\rm exp}$, which implies in turn that the two dimensional marginals of $P_{s,\mu}$ and $P'_{s,\mu}$ also coincide. Then the extension of this to all finite-dimensional marginals goes by induction. This completes the proof.



\end{document}